\theoremstyle{plain}
\newtheorem{Theorem}{Theorem}[section]
\newtheorem*{Theorem*}{Theorem}
\newtheorem*{Proposition*}{Proposition}
\newtheorem{Proposition}[Theorem]{Proposition}
\newtheorem{Lemma}[Theorem]{Lemma}
\newtheorem{Corollary}[Theorem]{Corollary}
\newtheorem*{Problem*}{Problem}
\theoremstyle{definition}
\newtheorem*{Example*}{Example}
\newtheorem*{Definition*}{Definition}
\theoremstyle{remark}
\newtheorem*{Remark}{Remark}
\newcommand{\Gset}{G\mathsf{set}}
\newcommand{\Rmod}{R\mathsf{-mod}}
\newcommand{\CC}{\mathbb{C}}
\newcommand{\FF}{\mathbb{F}}
\newcommand{\ZZ}{\mathbb{Z}}
\newcommand{\NN}{\mathbb{N}}
\newcommand{\KK}{\mathbb{K}}
\newcommand{\rr}{\mathcal{R}}
\newcommand{\nn}{\mathcal{N}}
\newcommand{\Fp}{\mathbb{F}_p}
\newcommand{\x}{^{\times}}
\newcommand{\Hom}{\operatorname{Hom}} 
\newcommand{\Mod}{\mathrm{mod} \ }
\DeclareMathOperator{\res}{Res}
\DeclareMathOperator{\ind}{Ind}
\DeclareMathOperator{\Ima}{Im}
\DeclareMathOperator{\st}{Stab}
\numberwithin{equation}{section} 
\newcommand\numberthis{\addtocounter{equation}{1}\tag{\theequation}} 
\title{Graded character rings, Mackey functors and Tambara functors}
\author{B\'{e}atrice I. Chetard}
\date{\today}
\begin{document}

	\begin{abstract}
	Let $G$ be a finite group and $\KK$ a field of characteristic zero. the ring $R_\KK(G)$ of virtual characters of $G$ over $\KK$ is naturally endowed with a so-called Grothendieck filtration, with associated graded ring $R^*_\KK(G)$. Restriction of representations to any $H\leq G$ induces a homomorphism $R^*_\KK(G) \to R^*_\KK(H)$. We show that, when $G$ is abelian, induction of representations preserves the filtration, so $R^*_\CC(-)$ is a Mackey functor; in the general case, we propose a modified filtration which turns $R^*_\KK(-)$ into a Mackey functor. We then turn to tensor induction of representations, and show that in the abelian case $R^*_\CC(-)$ is a Tambara functor.
	\end{abstract}

	\maketitle

	\section{Introduction}
Let $G$ be a finite group and $\KK$ a field of characteristic zero, and let $R_\KK(G)$ be the ring of virtual characters of $G$ over $\KK$. Exterior powers or representations turn $R_\KK(G)$ into a $\lambda$-ring, equipped with a so-called Grothendieck filtration. In \cite{chetard}, we undertook to compute examples of the graded character ring $R^*_\KK(G)$ associated with this filtration. The structures appearing as graded character rings of finite groups are remarkably complex; there is no known Künneth formula for $R^*_\KK(G)$, and computing graded rings of small groups reveals challenging. As an example, consider:
\[
  R^*_\CC(C_4\times C_4) = \frac{\ZZ[x,y]}{(4x,4y, 2x^2y + 2xy^2, x^4y^2 - x^2y^4)},
\]
where $|x| = |y| = 1$ (see \cite[Prop. 7.2]{chetard}). This result was obtained by combining the topological properties of the filtration with the functoriality of $R^*_\KK(-)$.\par

We turn here to a problem of a more abstract nature: graded character rings are functorial, and thus for any subgroup $H \leq G$, restriction of representations to $H$ induces a well-defined ring homomorphism $\res^G_H: R^*_\KK(G) \to R^*_\KK(H)$ (see \cite[Lem. 4.1]{chetard}). Does induction of representations yield a map $R^*_\KK(H) \to R^*_\KK(G)$? Such a transfer map would turn $R^*_\KK(-)$ into a \textbf{Mackey functor}, a particularly widespread type of algebraic structure: group cohomology, algebraic $K$-theory, character rings are all Mackey functors.
Among other results, the stable element method of Cartan and Eilenberg (see \cite{cartan-eilenberg}) generalises to all Mackey functors, and would allow us to relate the graded character ring of a group to those of its Sylow subgroups. A concise account of the general theory of Mackey functors is given in \cite{webb}.

If $S$ is any Mackey functor, then the following "stable element" result applies:
\begin{Proposition}\label{stableelementmethod}
  If $H := Syl_p(G)$ is abelian, then
  \[
    \res_H^G: S(G)_{(p)} \longrightarrow S(H)^{N_G(H)}
  \]
  is an isomorphism.
\end{Proposition}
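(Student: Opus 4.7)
The plan is the classical Cartan--Eilenberg stable element method, constructing an explicit inverse for $\res^G_H$. First, the image of $\res^G_H$ already lies in the $N_G(H)$-invariants: inner automorphisms of $G$ act trivially on $S(G)$ by the Mackey functor axioms, so for $n \in N_G(H)$ one has $c_n \circ \res^G_H = \res^G_H \circ c_n = \res^G_H$, and $c_n$ is well-defined on $S(H)$ because $n$ normalizes $H$. Since $H$ is Sylow-$p$, the index $[G:H]$ is a unit in $\ZZ_{(p)}$, so I take as candidate inverse the scaled transfer
\[
  \sigma(y) := [G:H]^{-1} \ind^G_H(y).
\]

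To verify $\res^G_H \circ \sigma = \mathrm{id}$, I apply the Mackey double coset formula
\[
\res^G_H \ind^G_H(y) = \sum_{g \in H\backslash G/H} \ind^H_{H \cap gHg^{-1}}\, c_g\, \res^H_{g^{-1}Hg \cap H}(y),
\]
and partition the double cosets into those with $g \in N_G(H)$ (in bijection with $N_G(H)/H$; each contributing $c_g(y) = y$ by invariance) and the rest, for which $K_g := H \cap gHg^{-1}$ is a proper subgroup of $H$. The normalizing cosets give $[N_G(H):H] \cdot y$. For the non-normalizing cosets, the abelianness of $H$, combined with Burnside's fusion theorem (any $G$-fusion among subgroups of an abelian Sylow is realized inside $N_G(H)$), promotes $N_G(H)$-invariance to full Cartan--Eilenberg stability, giving $c_g \res^H_{g^{-1}Hg \cap H}(y) = \res^H_{K_g}(y)$. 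Each such term then collapses to $\ind^H_{K_g} \res^H_{K_g}(y) = [H:K_g]\, y$, and summing via the double coset identity $\sum_{[g]} [H:K_g] = [G:H]$ gives $\res^G_H \ind^G_H(y) = [G:H]\, y$, hence $\res^G_H \sigma(y) = y$. The reverse composition $\sigma \circ \res^G_H = \mathrm{id}$ follows from $\ind^G_H \res^G_H = [G:H]\cdot\mathrm{id}$.

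The main technical obstacle is the reduction of the non-normalizing Mackey summands: one must show that the $N_G(H)$-invariance of $y$ is strong enough to force each such term into the shape $\ind^H_{K_g} \res^H_{K_g}(y)$. This is precisely where the abelian Sylow hypothesis is essential; without abelianness, only the Cartan--Eilenberg stable elements in $S(H)$---potentially a proper subset of $S(H)^{N_G(H)}$---sit in the image of $\res^G_H$, and one must work with that weaker target instead.
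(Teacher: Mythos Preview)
The paper does not supply its own proof: the proposition is stated in the introduction, and its reappearance as Corollary~\ref{swanslemma} simply refers the reader to \cite[Th.~6.8]{adem-milgram}, while the general Cartan--Eilenberg statement cites \cite[Cor.~3.7 and Prop.~7.2]{webb}. Your argument is the standard Cartan--Eilenberg construction of an explicit inverse via the averaged transfer, and the appeal to Burnside's fusion theorem to promote $N_G(H)$-invariance to full stability is correct --- that is precisely the step that spends the abelian hypothesis.

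One hypothesis is, however, being used without being stated. The identities $\ind^H_{K_g}\res^H_{K_g}(y) = [H:K_g]\,y$ and $\ind^G_H\res^G_H = [G:H]\cdot\mathrm{id}_{S(G)}$ are the defining property of a \emph{cohomological} Mackey functor; they fail for a general one (already for the Burnside functor, $\ind^G_1\res^G_1([G/G]) = [G/1] \ne |G|\cdot[G/G]$). So your proof, as written, establishes the result only under the extra assumption that $S$ is cohomological. This is harmless for the paper's purposes --- $R(-)$ and $\rr^*(-)$ are cohomological, and the paper itself invokes $\ind^G_H\res^G_H = [G:H]$ in the proof of Corollary~\ref{corestrictionissurjective} --- but you should flag the assumption explicitly rather than leave it buried in the two collapse steps.
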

Here $Syl_p(G)$ denotes a $p$-Sylow of $G$, and $S(H)^{N_G(H)}$ is the set of elements of $S(H)$ that are invariant under the action of the normalizer of $H$ in $G$. In the example of the alternating group $A_4$ of order $12$ (see \Cref{A4_gradrepring}), the surjectivity condition fails when restricting to the $2$-Sylow $C_4\times C_4$, and thus the graded character ring functor $R^*_\CC(-)$ is not a Mackey functor. This is \Cref{R*notmackey} in the text.

It is possible to "Mackeyfy" graded character rings by modifying the Grothendieck filtration. We define the \textbf{saturated filtration} $\{F^n(G)\}_{n\geq 0}$ as the minimal filtration that is preserved by induction of characters and contains the Grothendieck filtration, that is:
\[
  F^n(G) = \sum_{H\leq G}\ind_H^G(\Gamma^n(H))
\]
where $\{\Gamma^n(H)\}_{n\geq 0}$ is the Grothendieck filtration on $R_\KK(H)$. We call the associated graded ring the \textbf{saturated graded ring} of $G$ over $\KK$, and denote it $\rr^*_\KK(G)$ (as opposed to $R^*_\KK(G)$ for the usual graded ring).
Fortunately, restriction of representations also preserves this filtration, and thus:
\begin{Theorem}
  The saturated graded ring:
  \[
    \rr^*_\KK(-) := \bigoplus_{n\geq 0} F^n(-)/F^{n+1}(-)
  \]
  is a Mackey functor.
\end{Theorem}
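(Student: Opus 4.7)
The plan is to verify the Mackey functor axioms for $\rr^*_\KK(-)$ by checking directly that restriction, induction, and conjugation all preserve the saturated filtration $F^\bullet$, and that the relations among them---functoriality, compatibility with conjugation, and the Mackey double coset formula---then descend from the corresponding identities in $R_\KK(-)$.

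That induction preserves the filtration is essentially built into its definition. For $K \leq G$, transitivity of induction gives
\[
  \ind_K^G\bigl(F^n(K)\bigr) = \sum_{H \leq K} \ind_K^G \ind_H^K\bigl(\Gamma^n(H)\bigr) = \sum_{H \leq K} \ind_H^G\bigl(\Gamma^n(H)\bigr) \subseteq F^n(G),
\]
so $\ind_K^G$ descends to a well-defined additive map $\rr^*_\KK(K) \to \rr^*_\KK(G)$.

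The main point is that restriction also preserves $F^\bullet$. Given $K, H \leq G$, the classical Mackey formula in the representation ring reads
\[
  \res^G_K \ind_H^G(x) = \sum_{g \in K\backslash G/H} \ind_{K \cap {}^gH}^K\, c_g\, \res^H_{H \cap K^g}(x),
\]
where ${}^gH := gHg^{-1}$ and $K^g := g^{-1}Kg$. Since restriction preserves the Grothendieck filtration by \cite[Lem. 4.1]{chetard}, and conjugation $c_g$ is induced by a group isomorphism and hence preserves it as well, for $x \in \Gamma^n(H)$ each inner term $c_g\,\res^H_{H \cap K^g}(x)$ lies in $\Gamma^n(K \cap {}^gH)$. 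Applying $\ind_{K \cap {}^gH}^K$ then places each summand in $F^n(K)$ by the very definition of the saturated filtration, so $\res^G_K\bigl(\ind_H^G \Gamma^n(H)\bigr) \subseteq F^n(K)$; summing over $H \leq G$ gives $\res^G_K\bigl(F^n(G)\bigr) \subseteq F^n(K)$, as required.

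With all three operations preserving $F^\bullet$, the Mackey double coset formula in $\rr^*_\KK(-)$ follows by reducing the above displayed identity modulo $F^{n+1}(K)$; functoriality and the conjugation relations are inherited verbatim from the corresponding relations in $R_\KK(-)$, and conjugation preserves $F^n$ by an essentially identical but easier argument (no double coset decomposition is needed). The only genuinely nontrivial step---the main obstacle---is the compatibility of restriction with $F^\bullet$, which rests on the classical Mackey formula combined with the preservation of $\Gamma^\bullet$ by restriction and by conjugation.
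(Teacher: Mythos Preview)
Your proof is correct and follows essentially the same approach as the paper: the key step in both is using the Mackey double coset formula, together with the fact that restriction and conjugation preserve the Grothendieck filtration, to show that $\res^G_K$ preserves $F^\bullet$. The paper packages this as a preliminary lemma and then simply observes that the Mackey axioms descend from $R_\KK(-)$, exactly as you do.
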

(See \Cref{rr*mackey}). At a first glance, there is no guarantee that $\rr^*_\KK(-)$ is not trivial in some way or other: a lot of the information contained in the Grothendieck filtration could be lost in the process. Reassuringly, both filtrations contain the same information "at infinity", as we show in \Cref{gammatopology_coincides_ftopology}:
\begin{Theorem}
  The saturated filtration and the Grothendieck filtration induce the same topology on the character ring $R_\KK(G)$.
\end{Theorem}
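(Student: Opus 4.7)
The inclusion $\Gamma^n(G) \subseteq F^n(G)$ is immediate from the $H = G$ term of the defining sum, so the Grothendieck topology is at least as fine as the saturated one. The content of the theorem is the converse cofinality: for every $n$, find $m$ with $F^m(G) \subseteq \Gamma^n(G)$. Since $G$ has only finitely many subgroups, this reduces to showing that for each $H \leq G$, the induction map $\ind_H^G \colon R_\KK(H) \to R_\KK(G)$ is continuous in the Grothendieck topologies.

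The plan is to pass from the $\gamma$-filtration to the augmentation-ideal filtration $\{I^n\}$, where $I = \ker(\epsilon \colon R_\KK(-) \to \ZZ)$ is the augmentation ideal. The classical Atiyah--Tall comparison asserts that on $R_\KK(G)$ (and on $R_\KK(H)$) the $\gamma$-adic and $I$-adic topologies coincide, and one always has $\Gamma^n \subseteq I^n$. Continuity of $\ind_H^G$ in the Grothendieck topologies is therefore equivalent to the $I$-adic statement: for every $n$, there exists $m$ with $\ind_H^G(I_H^m) \subseteq I_G^n$.

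Two ingredients combine to prove this. First, Frobenius reciprocity $\ind_H^G(x \cdot \res^G_H(y)) = \ind_H^G(x) \cdot y$ makes $\ind_H^G$ a morphism of $R_\KK(G)$-modules (with $R_\KK(H)$ given the restriction structure), so $\ind_H^G(I_G^k \cdot R_\KK(H)) \subseteq I_G^k$. Second, I would show that the filtrations $\{I_H^m\}$ and $\{I_G^m \cdot R_\KK(H)\}$ are cofinal on the Noetherian ring $R_\KK(H)$. Since $I_G \cdot R_\KK(H) \subseteq I_H$, iterating reduces this to a single nilpotence statement: $I_H^N \subseteq I_G \cdot R_\KK(H)$ for some $N \geq 1$, from which $I_H^{Nk} \subseteq I_G^k \cdot R_\KK(H)$ follows, and Frobenius concludes.

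The main obstacle is this nilpotence statement in the non-abelian case. For abelian $G$ it is immediate: the surjection $\hat G \twoheadrightarrow \hat H$ yields $\res^G_H(I_G) = I_H$, so $N = 1$ suffices. In general, I would prove it by showing that every prime $\pp$ of $R_\KK(H)$ containing $I_G \cdot R_\KK(H)$ also contains $I_H$; since $R_\KK(H)/I_G \cdot R_\KK(H)$ is a finitely generated $\ZZ$-module, Noetherianity then forces the image of $I_H$ to be nilpotent. The key input is character-theoretic: primes of $R_\KK(H) \otimes \QQ$ lying over $I_G \otimes \QQ$ correspond to conjugacy classes $[h]$ of $H$ with $\chi(h) = \chi(1)$ for every $\chi \in R_\KK(G)$, which forces $h = 1$ by separation of points; a parallel check at positive residue characteristics finishes the argument.
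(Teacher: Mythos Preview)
Your approach is essentially the same as the paper's: reduce to the $I$-adic topology via the Atiyah--Tall comparison, then use Frobenius reciprocity together with the cofinality of $\{I_H^m\}$ and $\{I_G^m \cdot R_\KK(H)\}$ on $R_\KK(H)$, and finally take a maximum over the finitely many $H\leq G$. The only difference is that the paper invokes this cofinality as a black box---it is Atiyah's result that the $I(H)$-adic and $I(G)$-adic topologies on $R(H)$ agree \cite[Th.~6.1]{atiyah-characters_cohomology}---whereas you sketch a direct proof via the prime spectrum of $R_\KK(H)$; your sketch is correct in outline and is in fact close to how Atiyah proves it.
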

This means, in particular, that induction of representations is continuous with respect to the Grothendieck topology, and can be extended to a map of completed rings $\widehat{\ind_H^G}: \widehat{R}_\KK(H) \to \widehat{R}_\KK(G)$. This, combined with the stable elements result, gives us \Cref{artinstheorem}, an analogue to Artin's theorem:
\begin{Theorem}
  Let $X$ be a family of subgroups of a finite group $G$. Let
	\[
		\widehat{\ind}: \bigoplus_{H \in X} \widehat{R}(H) \to \widehat{R}(G)
	\]
	be the morphism defined on each $\widehat{R}(H)$ by $\widehat{\ind_{H}^{G}}$.

  If $X$ contains a $p$-Sylow subgroup of $G$ for each prime $p$, then the map $\widehat{\ind}$ is surjective.
\end{Theorem}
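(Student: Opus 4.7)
The plan is to use the topology-equivalence theorem cited above to rewrite $\widehat{R}(G) = \varprojlim_n R(G)/F^n(G)$ and reduce surjectivity of $\widehat{\ind}$ to a statement on the associated graded ring $\rr^*_\KK$ (for $\KK$ our fixed field of characteristic zero). Since induction preserves the saturated filtration, $\widehat{\ind}$ arises as the inverse limit of the compatible finite-stage maps
\[
  \widehat{\ind}_n: \bigoplus_{H\in X} R(H)/F^n(H) \longrightarrow R(G)/F^n(G).
\]
Because all groups in sight are finitely generated abelian, the inverse system of kernels satisfies Mittag--Leffler, so it suffices to prove that each $\widehat{\ind}_n$ is surjective. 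Filtering target and source by the $F^k/F^n$ and inducting on $k$, this in turn reduces to surjectivity of the induced map on associated graded rings,
\[
  \bigoplus_{H\in X} \rr^*_\KK(H) \longrightarrow \rr^*_\KK(G),
\]
in every degree.

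For the graded statement I would exploit the Mackey structure on $\rr^*_\KK$. Fix a prime $p$ and let $H_p\in X$ be a $p$-Sylow of $G$. Proposition~\ref{stableelementmethod} applied to the Mackey functor $\rr^*_\KK$, combined with Frobenius reciprocity in the Green functor $\rr^*_\KK$ (the filtration $F^\bullet$ being ideal), is expected to yield that $\ind_{H_p}^G : \rr^*_\KK(H_p)\to\rr^*_\KK(G)_{(p)}$ is surjective. Since each homogeneous piece of $\rr^*_\KK(G)$ is a finitely generated abelian group, surjectivity after $p$-localization for every prime $p$ implies integer surjectivity, which concludes the argument.

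The principal obstacle is converting the \emph{injectivity of restriction} furnished by Proposition~\ref{stableelementmethod} into the required \emph{surjectivity of induction}. For a cohomological Mackey functor this is automatic via the identity $\ind\circ\res=[G:H]$, but for a general Green functor such as $\rr^*_\KK$ one must invoke Frobenius reciprocity and control the class $\ind_{H_p}^G(1)\in\rr^*_\KK(G)$ and its action on $\rr^*_\KK(G)_{(p)}$. A secondary subtlety is the abelian-Sylow hypothesis present in Proposition~\ref{stableelementmethod} but absent here: one should either appeal to a more general Cartan--Eilenberg-type description of $S(G)_{(p)}$ as a limit over the transporter category, or reduce to the abelian case by filtering $H_p$ by a normal series with abelian quotients and iterating the stable-element argument.
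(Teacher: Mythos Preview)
Your overall architecture---pass to the saturated filtration, reduce surjectivity of $\widehat{\ind}$ to surjectivity on the associated graded $\rr^*_\KK$, then check this one prime at a time---is exactly the paper's route. The reduction steps (Mittag--Leffler, filtering by $F^k/F^n$) are fine. Where you go astray is in the last paragraph: both ``obstacles'' you raise are illusory, and worrying about them obscures the one-line argument the paper actually gives.

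First, $\rr^*_\KK$ \emph{is} cohomological in the only sense that matters here. In degree~$0$ we have $\rr^0_\KK(G)=R_\KK(G)/I\cong\ZZ$ via the augmentation, and under this identification $\ind_{H_p}^G(1)$ is the degree of the permutation representation $\KK[G/H_p]$, namely $[G:H_p]$. The projection formula (valid because $\rr^*_\KK$ is a Green functor) then gives $\ind_{H_p}^G\circ\res^G_{H_p}=[G:H_p]\cdot\mathrm{id}$ on $\rr^*_\KK(G)$. Since $[G:H_p]$ is prime to $p$, it acts invertibly on the $p$-primary component $\rr^*_\KK(G)_{(p)}$, and surjectivity of $\ind_{H_p}^G$ onto that component is immediate. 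This is precisely \Cref{corestrictionissurjective} in the paper, and it uses neither Proposition~\ref{stableelementmethod} nor any hypothesis on $H_p$ being abelian. So your ``principal obstacle'' dissolves once you compute $\ind_{H_p}^G(1)$ in $\rr^0_\KK(G)$, and your ``secondary subtlety'' never arises because the stable-element method is not invoked at all.
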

Judging from the definition of the saturated filtration, one could expect $\rr^*_\KK(-)$ to remember some information about the subgroup structure of $G$. An intriguing open problem is whether $\rr^*(-)$ could distinguish groups with the same character table and power maps, something the usual graded character ring cannot do.

There are many examples of groups $G$ such that the two filtrations coincide, and the natural map $R^*_\KK(G) \to \rr^*_\KK(G)$ is an isomorphism (these two facts are actually equivalent, as we show in \Cref{saturatedgroupsequalfiltrations}); we call them \textbf{saturated groups}.
The following result combines \Cref{abeliangroupsaresaturated}, \Cref{Q8_is_saturated}, and \Cref{Dp_is_saturated}:
\begin{Theorem}
  Groups of order less than 12, as well as abelian groups, and dihedral groups of order $2p$ for $p$ prime, are saturated.
  In particular, the restriction of $R^*_\CC(-)$ to abelian groups is a Mackey functor.
\end{Theorem}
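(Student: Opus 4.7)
The plan is to verify, for each of the three families in the statement, the defining inclusion $\ind^G_H(\Gamma^n(H)) \subseteq \Gamma^n(G)$ for every subgroup $H \leq G$ and every $n \geq 0$; the reverse inclusion $\Gamma^n(G) \subseteq F^n(G)$ is automatic, and the equivalence with the natural map $R^*_\KK(G) \to \rr^*_\KK(G)$ being an isomorphism is \Cref{saturatedgroupsequalfiltrations}.

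For the abelian case, the crucial fact is that if $G$ is abelian, every one-dimensional character of a subgroup $H \leq G$ extends to $G$, so the restriction map $\res^G_H : R_\CC(G) \to R_\CC(H)$ is surjective. Since all irreducible characters of an abelian group are one-dimensional, $\gamma^k(\chi - 1) = 0$ for $k \geq 2$ and $\gamma^1(\chi - 1) = \chi - 1$, so $\Gamma^n(H)$ coincides with the $n$-th power of the augmentation ideal, and $\Gamma^n(H) = \res^G_H(\Gamma^n(G))$. The Frobenius projection formula then yields
\[
  \ind^G_H(\res^G_H(y)) \;=\; \ind^G_H(1)\cdot y \;\in\; R_\CC(G)\cdot\Gamma^n(G)\;\subseteq\;\Gamma^n(G),
\]
which closes the abelian case. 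The "in particular" assertion is then immediate: since $\rr^*_\KK(-)$ is a Mackey functor by \Cref{rr*mackey} and coincides with $R^*_\KK(-)$ on abelian groups (together with the induced and restricted maps between them), $R^*_\CC(-)$ restricted to abelian groups inherits the Mackey structure.

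For dihedral groups $D_p$ of order $2p$ (with $p$ an odd prime, as $p=2$ is abelian), I would enumerate the subgroups up to conjugacy---trivial, $C_p$ (normal of index $2$), a reflection subgroup $C_2$, and $D_p$ itself---and treat induction from each. Both $C_p$ and $C_2$ are abelian, so $\Gamma^n(H)$ is the $n$-th power of the augmentation ideal. For $H = C_p$, using the formula $\ind^{D_p}_{C_p}(\chi^k) = \rho_k$ (where $\rho_k$ is a two-dimensional irreducible when $k\neq 0$) together with the projection formula and the identity $(\chi - 1)(\chi^{-1} - 1) = -\res^{D_p}_{C_p}(\rho_1 - 2)$, every generator of the form $(\chi^{k_1}-1)\cdots(\chi^{k_n}-1)$ can be either pulled back across restriction or paired up into restricted classes, reducing to the abelian-type argument. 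The case $H = C_2$ is analogous but simpler. For the remaining groups of order less than $12$, the classification leaves only $D_4$ and $Q_8$, which are handled by a finite explicit check: their Grothendieck filtrations are computed in \cite{chetard}, and $\ind^G_H(\Gamma^n(H))$ can be tabulated and compared with $\Gamma^n(G)$ on a generating set.

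The main obstacle is the interplay between the two-dimensional irreducibles and the $\gamma$-operations. For $D_p$, $D_4$, and $Q_8$, the filtration $\Gamma^n$ has generators involving $\gamma^k(\rho - \dim\rho)$ for $k \geq 2$, and one must show that induced products of augmentation classes, which a priori only land in a lower $\Gamma^m$, can be rewritten via $\lambda$-ring identities so as to sit in the prescribed filtration degree. A key tool is the elementary relation $2(\chi - 1) = -(\chi - 1)^2 \in \Gamma^2$ for any character $\chi$ of order $2$, which lets one absorb missing powers of the augmentation ideal into $\Gamma^n$; I expect most of the technical work to consist in systematically assembling such identities for each of the small groups under consideration.
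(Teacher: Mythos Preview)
Your overall strategy matches the paper's: prove abelian groups are saturated via surjectivity of restriction plus the projection formula (this is exactly \Cref{restrictionsurjective} and \Cref{abeliangroupsaresaturated}), and for the small nonabelian cases reduce, via the projection formula, to expressing powers of augmentation classes in the cyclic subgroup as restrictions of elements in the correct $\Gamma$-degree of the ambient group (this is the method of \Cref{Q8_is_saturated} and \Cref{Dp_is_saturated}).

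There is, however, a genuine gap in your $D_p$ argument. The identity you invoke,
\[
  (\chi-1)(\chi^{-1}-1) \;=\; -\res^{D_p}_{C_p}(\rho_1-2),
\]
is correct, but the element $\rho_1-2$ lies only in $\Gamma^1(D_p)$, not in $\Gamma^2(D_p)$: its class in $R^1(D_p)=\Gamma^1/\Gamma^2$ is $c_1(\rho_1)=c_1(\det\rho_1)=c_1(\varepsilon)=x\neq 0$. Consequently, when you ``pair up'' two augmentation factors into $\res(\rho_1-2)$ and pull it out by the projection formula, you only gain one filtration degree for a pair, and the induction on $n$ stalls (or becomes circular, since you would need to already know that the remaining factor induces into the correct $\Gamma$-degree). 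The paper avoids this by using instead the second Chern class $Y:=C_2(\rho_1)=\gamma^2(\rho_1-2)\in\Gamma^2(D_p)$, for which
\[
  \res^{D_p}_{C_p}(Y)\;=\;C_2(\chi+\chi^{-1})\;=\;(\chi-1)(\chi^{-1}-1).
\]
Since $(\chi-1)^2=-\chi\,(\chi-1)(\chi^{-1}-1)=-\chi\cdot\res(Y)$, one obtains for $n=2m+l$ with $l\in\{0,1\}$
\[
  \ind_{C_p}^{D_p}\bigl((\chi-1)^n\bigr)
  \;=\;\ind_{C_p}^{D_p}\bigl((-\chi)^m(\chi-1)^l\bigr)\cdot Y^m
  \;\in\;\Gamma^l(D_p)\cdot\Gamma^{2m}(D_p)\;\subseteq\;\Gamma^n(D_p),
\]
and the same device handles $Q_8$ with $Y=C_2(\Delta)$. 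So the fix is simply to replace $\rho_1-2$ by $C_2(\rho_1)$ in your identity; with that change your outline coincides with the paper's proof. (Your observation that $D_4$ must also be checked separately is well taken; the paper's three cited propositions do not literally cover it, and it requires the same kind of explicit verification you propose.)
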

In general, the complexity of the saturated filtration makes direct computations difficult. However, since $\rr^*_\KK(-)$ is a Mackey functor, one can use the stable element method (\Cref{stableelementmethod} above), which allows us to determine the saturated ring of a group, knowing only those of its Sylow subgroups. This is the method we use for \Cref{rr*psl2p}:
\begin{Theorem}
  Let $G = PSL(2,p)$ be the projective special linear group over $\FF_p$, where $p$ is an odd prime such that $p \equiv 3,5 (\Mod 8)$. Write:
  \[
    |G| = 4\cdot p\cdot l_1^{i_1}\cdots l_n^{i_n}\cdot r_1^{j_1}\cdots r_m^{j_m}, \ \ \ \ \text{ with } \ \ l_k|(p-1), \ \  r_k|(p+1).
  \]
  Then:
  \begin{equation}
    \rr^*(G) \cong \frac{\ZZ[x_1,\cdots,x_n, y_1,\cdots y_m, z, t, u]}{(l_k^{i_k}x_k, r_k^{j_k}y_k, 2z,2t, pu, z^3 - t^2)}
  \end{equation}
  with $|x_k| = |y_k| = |z| = 2$, $|t| = 3$, $|u| = (p-1)/2$.
\end{Theorem}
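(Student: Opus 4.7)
The strategy is to apply the stable element method (\Cref{stableelementmethod}) at each prime $l$ dividing $|G|$, using that $\rr^*(-)$ is a Mackey functor; this reduces each $l$-local computation to normalizer invariants on the graded ring of the corresponding Sylow subgroup. The first step is to verify that every Sylow of $G = PSL(2,p)$ is abelian, so that \Cref{stableelementmethod} applies and, by the earlier saturation result for abelian groups, $\rr^*(H) = R^*(H)$ on each Sylow. For $p \equiv 3, 5 \pmod{8}$ the $2$-part of $|G| = p(p-1)(p+1)/2$ is exactly $4$ and $2$ is a non-square mod $p$, so $G$ has no element of order $4$ and the $2$-Sylow is the Klein four-group $V_4$; for each odd prime $l_k \mid (p-1)$ (resp.\ $r_k \mid (p+1)$) the corresponding Sylow is cyclic, sitting inside the split (resp.\ non-split) maximal torus; finally, the $p$-Sylow is the unipotent radical $C_p$ of a Borel. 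In each case, \Cref{stableelementmethod} then gives $\rr^*(G)_{(l)} \cong R^*(H_l)^{N_G(H_l)}$.

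For odd $l \neq p$, the normalizer quotient $N_G(H_l)/H_l$ is $C_2$ (the Weyl involution of the torus), acting on $R^*(C_{l^k}) = \ZZ[x]/(l^k x)$ by $x \mapsto -x$; the invariants are generated by $x^2$ in degree $2$, giving the summands $\ZZ[x_k]/(l_k^{i_k} x_k)$ and $\ZZ[y_k]/(r_k^{j_k} y_k)$ in the statement. For the $p$-Sylow, $N_G(C_p)/C_p \cong C_{(p-1)/2}$ is the Borel quotient, acting on $C_p$ by multiplication by squares in $\FF_p\x$; on $R^*(C_p) = \FF_p[x]$ this scales $x^n$ by $a^{2n}$ for $a$ a primitive root mod $p$, so the invariants form $\FF_p[u]$ with $u = x^{(p-1)/2}$.

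The $2$-primary case is the crux. By Dickson's classification of subgroups of $PSL(2,p)$, the normalizer of $V_4$ is $A_4$, so $N_G(V_4)/V_4 \cong C_3$ permutes the three nontrivial characters cyclically. A direct analysis of the augmentation-ideal filtration on $R(V_4)$ yields $R^*(V_4) \cong \FF_2[a, b]/(a^2 b + a b^2)$ with $|a| = |b| = 1$, where $a = \overline{\chi_1 - 1}$ and $b = \overline{\chi_2 - 1}$. I would then compute $R^*(V_4)^{C_3}$ directly: orbit sums give invariants $z = a^2 + ab + b^2$ in degree $2$ and $t = a^3 + a^2 b + b^3$ in degree $3$, and one checks that both $z^3$ and $t^2$ reduce to $a^6 + b^6 + a^4 b^2$ in the quotient (using $a^2 b = a b^2$), yielding the relation $z^3 = t^2$. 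A Hilbert-series check via Molien (valid since $|C_3|$ is invertible mod $2$), combined with the fact that $a^2 b + a b^2$ is a non-zero-divisor in $\FF_2[a, b]^{C_3}$, confirms that $z, t$ generate the invariant subring and that $z^3 - t^2$ is the only relation.

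Finally, $\rr^*(G)$ is torsion in positive degrees (since $I_G \otimes \QQ$ is idempotent in the semisimple $\QQ$-algebra $R(G) \otimes \QQ$, so the rationalization is concentrated in degree $0$), hence as a ring it splits into $\ZZ \oplus \bigoplus_{l \mid |G|} (\rr^*(G)_{(l)})_+$ with products between summands of coprime torsion order vanishing automatically. Assembling the local pieces yields the presentation in the theorem. The main obstacle is the $2$-primary invariant computation: both in identifying the correct generators $z, t$ inside the quotient $\FF_2[a, b]/(a^2 b + a b^2)$ and in establishing the single nontrivial relation $z^3 = t^2$, which appears only after using $a^2 b = a b^2$ to collapse the two middle monomials. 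The remaining ingredients are either standard (Sylow and normalizer structure of $PSL(2,p)$ from Dickson) or mechanical ($R^*$ of a cyclic group under its Weyl or Galois action).
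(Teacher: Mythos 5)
Your strategy is the same as the paper's: apply the stable element method at each prime dividing $|G|$, reduce to normalizer invariants on the abelian Sylow subgroups, and assemble the local pieces using that $\rr^*(G)$ is torsion in positive degrees. Where you diverge is in the division of labour. The paper spends most of its proof on explicit matrix-level determinations of the Sylow subgroups and their normalizers inside $PSL(2,p)$ --- in particular, the case $r \mid (p+1)$ requires passing to $PSL(2,p^2)$ to identify the non-split torus, and then a quadratic-residue counting argument to produce an honest element $S \in PSL(2,p)$ inverting the Sylow generator; similarly, the $2$-local case exhibits explicit matrices for $V_4$ and for the extra order-$3$ element of the normalizer, split into the two congruence classes $p \equiv 3, 5 \pmod 8$. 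You replace all of this by a citation to Dickson's subgroup classification, which is legitimate but hides where the real work was done. Conversely, you give more detail than the paper on the $2$-local invariant computation: the paper simply records $\rr^*(V_4)^{C_3} \cong \ZZ[z,t]/(2z,2t,z^3-t^2)$, whereas you sketch the Molien-series argument (using that $3$ is invertible mod $2$, so taking $C_3$-invariants is exact, and that the invariant element $t_1^2t_2 + t_1t_2^2$ is a non-zero-divisor in the polynomial invariant ring) to certify that $z, t$ generate and that $z^3 - t^2$ is the only relation. That is a genuine addition. Two small imprecisions worth flagging: $R^*(C_p)$ is $\ZZ[x]/(px)$, not $\FF_p[x]$ (the degree-$0$ piece is $\ZZ$), and the inference "$2$ is a non-square mod $p$, so $G$ has no element of order $4$" is correct but indirect --- what one really uses is that $8 \nmid (p \pm 1)$, i.e.\ the $2$-part of $(p^2-1)/2$ is exactly $4$ and neither torus has an element of order $4$.
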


The last problem we treat in this paper is that of tensor induction, a multiplicative map $R_\KK(H) \to R_\KK(G)$. Mackey functors equipped with such a multiplicative map (and satisfying certain axioms) are called \textbf{Tambara functors}. In group cohomology, this role is played by the Evens norm, wich, applied to the subgroup inclusion $G\hookrightarrow  G\times C_p$ (for $p$ prime), can be used to define Steenrod operations. The (ungraded) character ring $R_\KK(G)$ with tensor induction is also a Tambara functor, as we prove in \Cref{tambara}.

In order to explore the connection between tensor induction and the Grothendieck filtration, one needs to understand the behaviour of the multiplicative norm on virtual characters. This is a remarkably complex problem, as there is no known formula for the norm of the sum of two characters, even when those come from actual representations. We follow Tambara's account and, restricting first to normal subgroups of prime index, then to abelian groups, we obtain such a formula. This is the key to prove \Cref{abeliantambara}:
\begin{Theorem}
  The graded character ring functor $R^*_\CC(-)$ is a Tambara functor on abelian groups.
\end{Theorem}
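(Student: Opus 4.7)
The plan is to lift the ungraded Tambara structure on $R_\CC(-)$, established in \Cref{tambara}, to the associated graded ring, building on the fact that $R^*_\CC(-)$ is already a Mackey functor on abelian groups (\Cref{abeliangroupsaresaturated}). All the Tambara axioms --- multiplicativity of the norm, compatibility with restriction via the exponential diagrams, and the like --- will then pass to the associated graded automatically, once tensor induction is shown to behave well with respect to the Grothendieck filtration.

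Precisely, for an inclusion $H \leq G$ of abelian groups with $k := [G:H]$, the task reduces to two filtration-theoretic properties of the tensor induction $N_H^G : R_\CC(H) \to R_\CC(G)$: (a) $N_H^G(\Gamma^n(H)) \subseteq \Gamma^{nk}(G)$ for every $n \geq 0$; and (b) if $x, y \in \Gamma^n(H)$ with $x - y \in \Gamma^{n+1}(H)$, then $N_H^G(x) - N_H^G(y) \in \Gamma^{nk+1}(G)$. Together these ensure that $N_H^G$ induces a well-defined map $R^*_\CC(H) \to R^*_\CC(G)$ on graded pieces, multiplying degrees by $k$.

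To prove (a) and (b), I would first reduce to the case where $H$ is normal of prime index $p$ in $G$, using the transitivity $N_K^G \circ N_H^K = N_H^G$ together with a composition series of the abelian quotient $G/H$. In this prime-index case, the explicit norm-of-a-sum formula developed in the preceding section expresses $N_H^G(\chi_1 + \cdots + \chi_r)$ as a sum of terms indexed by orbits of the $G/H$-action on $p$-tuples of indices, with each term built from products of the $\chi_i$ and from additive inductions of such products. Writing an arbitrary element of $\Gamma^n(H)$ as a polynomial in the $\Gamma^1$-generators $\chi - 1$ (differences of linear characters), and expanding through the norm formula, one checks that every resulting term lies in $\Gamma^{nk}(G)$ --- verifying (a) --- and that the cross-terms arising from an added element of $\Gamma^{n+1}(H)$ pick up an extra factor from $\Gamma^{n+1}$, which the abelian Mackey property ensures remains in $\Gamma^{n+1}(G)$ after additive induction, proving (b).

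The main obstacle is (b): the non-additivity of $N$ prevents a generator-by-generator check, so one must unpack the combinatorics of the norm-of-a-sum formula to isolate the cross-terms and verify they raise the total filtration degree by at least one. The argument is a concrete but delicate bookkeeping, made tractable by the abelian hypothesis under which every irreducible representation is one-dimensional and every subgroup is normal. Once (a) and (b) are in place, the Tambara identities for $R^*_\CC(-)$ reduce to equalities of graded maps whose ungraded lifts already agree by \Cref{tambara}, and thus hold by passage to the associated graded.
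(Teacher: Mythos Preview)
Your approach is essentially the paper's: reduce to prime index via transitivity, then use the addition formula (\Cref{abelianadditionvirtual}) together with the computation $\nn_H^G(\rho-1)\in\Gamma^p(G)$ for linear $\rho$ and the fact that additive induction preserves the $\Gamma$-filtration on abelian groups (\Cref{abeliangroupsaresaturated}). The paper packages (a) as \Cref{normingamma} and then passes to the graded ring in one sentence; you are being more careful in isolating the well-definedness condition (b), which the paper leaves implicit.

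One remark on your assessment of difficulty: (b) is not really ``the main obstacle.'' Once you have the explicit formula
\[
\nn_H^G(x+y) = \nn_H^G(x) + \nn_H^G(y) + \sum_{i=1}^{p-1}\ind_H^G(x^i y^{p-i})
\]
for \emph{virtual} $x,y$ (this is \Cref{abelianadditionvirtual}, and establishing it for virtual characters is where the real work lies), condition (b) drops out immediately: for $x\in\Gamma^n$ and $y\in\Gamma^{n+1}$ each cross-term $x^i y^{p-i}$ lies in $\Gamma^{ni+(n+1)(p-i)}\subseteq\Gamma^{np+1}$, and induction preserves this. So the delicate bookkeeping you anticipate for (b) is already absorbed into the derivation of the addition formula itself; no separate combinatorial unpacking is needed.
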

As an application, we propose in \Cref{application} to compute, for any abelian group $G$, the norm of any degree-one Chern class from $R^*_\CC(G)$ to $R^*_\CC(G\times C_p)$.  This brings us one step closer to defining Steenrod operations on graded character rings.

The paper is organised as follows: we recall some definitions in \Cref{definitions}, where, in particular, we introduce the necessary notation to consider representation rings from the point of view of equivariant $K$-theory. In \Cref{stable_elements}, we show that graded character rings are not Mackey functors, through the example of the alternating group $A_4$; \Cref{saturatedrings} introduces the saturated filtration and explores its properties, which we apply in \Cref{computingsaturatedrings} to compute the saturated ring of, among others, the projective special linear group $PSL(2,p)$ for some primes $p$.
We then move on to Tambara functors in \Cref{tambara}, where we present a proof that equivariant $K$-theory is a Tambara functor. In \Cref{additionformula}, we study the norm of the sum of two characters, and determine a formula which shows that graded character rings of abelian groups are Tambara functors; we conlude in \Cref{application} by a straightforward application of these result to norms in abelian groups of the form $G\times C_p$.

\textbf{Acknowledgements.} I would like to thank both my Ph.D. advisors: Pierre Guillot, for his wise mentorship and his uncountable encouragements, and J\'an Min\'a\v{c} for his unfaltering enthusiasm.


\section{Definitions and notations}\label{definitions}
We briefly recall the definition of a graded character ring. For details, we refer the reader to \cite{chetard}.

Let $G$ be a finite group and $\KK$ a field of characteristic zero. The \textit{ring of virtual characters} (or character ring) $R_\KK(G)$ over $\KK$ is the Grothendieck ring of the category of $\KK G$-modules; that is, it is the abelian group generated by irreducible representations of $G$ up to isomorphism, with multiplication given by the tensor product of representations. Since $\KK$ has characteristic zero, representations up to isomorphism are fully determined by their character, thus we'll use both terms interchangeably. The character ring is an augmented ring, with augmentation $\epsilon: R_\KK(G) \to \ZZ$ sending a character to its degree.
Additionally, exterior powers of representations turn $R_\KK(G)$ into a $\lambda$-ring: if $\chi$ is a character of $G$ over $\KK$ afforded by a representation $\rho$, we write $\lambda^n(\chi)$ for the character of the $n$-th exterior power $\Lambda^n \rho$ of $\rho$. The operations $\{\lambda^n\}$ can be extended to the whole ring $R_\KK(G)$, and satisfy the axioms for a $\lambda$-ring, which are detailed in \cite{atiyah-tall}. For $x \in R_\KK(G)$ and $n \in \NN$ put
\[
	\gamma^n(x) = \lambda^n(x +n -1),
\]
the $n$-th gamma operation. Let $I = \ker \epsilon$ be the augmentation ideal. We define the $n$-th ideal $\Gamma^n$ in the Grothendieck filtration (or $\Gamma$-filtration) as the abelian subgroup generated by monomials of the form
\[
	\gamma^{n_1}(x_1)\cdot\gamma^{n_2}(x_2)\cdots\gamma^{n_k}(x_k), \ x_i\in I, \ \sum_{i = 1}^k n_i \geq n.
\]
Then $\Gamma^n$ is a $\lambda$-ideal (that is, an ideal that is preserved by $\lambda$-operations) in $R_\KK(G)$. We have $\Gamma^0 = R_\KK(G)$ and $\Gamma^1 = I$. Moreover, $\Gamma^n\cdot\Gamma^m \subseteq \Gamma^{n+m}$, therefore we can define the \textit{graded character ring} of $G$ over $\KK$ as follows:
\[
	R^*_\KK(G) = \bigoplus_{i\geq 0} \Gamma^{i}/\Gamma^{i+1}.
\]
From now on, we write $R(G)$ and $R^*(G)$ when $\KK$ is clear from the context. Note that although most of our general results are independent of the field $\KK$, we always compute explicit examples over $\KK = \CC$. The ring $R^*(G)$ is generated by \textit{Chern classes} of irreducible representations; for the full definition and properties of Chern classes, we refer the reader to \cite{chetard}. Suffice it to say that the \textit{$n$-th algebraic Chern class} $c_n(\rho)$ of a character $\rho$ is defined as the image in $R^*(G)$ of the element
\[
	C_n(\rho) = \gamma^n(\rho - \epsilon(\rho)).
\]
It is an element of degree $n$ in $R^*(G)$.

The definitions of Mackey and Tambara functors, to be given in later sections, are greatly simplified by looking at character rings from the point of view of $G$-equivariant $K$-theory. We view a $G$-set $X$ as a category with an object for each point, and an arrow between two objects $(g,x): x \to y$ for each $g \in G$ such that $g\cdot x = y$.
A vector bundle is then defined as a functor $V$ between $X$ and the category of $\KK$-vector spaces and linear maps; that is, it associates to each $x\in X$ a vector space $V_x$, and to each $g\in G$ linear maps $V_{(g,x)}: V_x \to V_{g\cdot x}$. For an element $e \in V_x$, we write $g  \cdot e \in V_{g \cdot x}$ for $V_{(g,x)}e$.
A functor $V$ then corresponds to the data of each $V_x$ and $g\cdot e$. Let $K_G^+(X)$ be the semigroup of isomorphism classes of vector bundles over $X$ under direct sum. In the sequel, we restrict ourselves to finite $G$-sets.
	\begin{Lemma}
		Let $X$ be a transitive $G$-set with a distinguished point $x \in X$, and let $H = \mathrm{Stab}(x)$. Then there is an isomorphism (depending on $x$) between $K_G^+(X)$ and the semiring of representations $R^+(H)$.
	\end{Lemma}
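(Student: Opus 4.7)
The plan is to construct mutually inverse semiring homomorphisms $\Phi: K_G^+(X) \to R^+(H)$ and $\Psi: R^+(H) \to K_G^+(X)$.

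For $\Phi$, given a $G$-equivariant vector bundle $V$ on $X$, take the fiber $V_x$. Since every $h \in H$ fixes $x$, the functor structure supplies linear maps $V_{(h,x)}: V_x \to V_x$ satisfying $V_{(h_1 h_2, x)} = V_{(h_1, x)} \circ V_{(h_2, x)}$, so $V_x$ inherits a well-defined $H$-representation structure. This assignment is manifestly compatible with direct sums and tensor products of bundles, so $\Phi$ is a homomorphism of semirings.

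For $\Psi$, I would use an induced bundle construction. Choose, for each $y \in X$, a group element $g_y \in G$ with $g_y \cdot x = y$, and take $g_x = e$. Given an $H$-representation $W$, set $V_y := W$ for every $y$, and for each arrow $(g, y): y \to g \cdot y$ declare $V_{(g, y)}: W \to W$ to be the action of $g_{g \cdot y}^{-1} g g_y$; this element fixes $x$ by construction, hence lies in $H$. The cocycle identity
\[
g_{g'gy}^{-1}(g' g) g_y = \bigl(g_{g'gy}^{-1} g' g_{gy}\bigr)\bigl(g_{gy}^{-1} g g_y\bigr)
\]
shows that $V$ is indeed a functor, and a different choice of the $g_y$ for $y \neq x$ produces an isomorphic bundle. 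Compatibility with direct sums and tensor products of $W$ is clear fiberwise.

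It then remains to verify that both compositions are the identity at the level of isomorphism classes. The composite $\Phi \circ \Psi = \mathrm{id}$ is immediate since $g_x = e$. For $\Psi \circ \Phi$, given $V \in K_G^+(X)$, the map from $\Psi(\Phi(V))$ to $V$ sending $w \in W = \Psi(\Phi(V))_y$ to $V_{(g_y, x)}(w) \in V_y$ is a bundle isomorphism, its $G$-equivariance being built into the definition of $\Psi$. The only obstacle is essentially bookkeeping: the cocycle check above and the verification of $G$-equivariance of this inverse isomorphism. Conceptually, the proposition formalises the standard equivalence, under $X \cong G/H$ via $g \cdot x \leftrightarrow gH$, between $G$-equivariant bundles on $G/H$ and $H$-representations.
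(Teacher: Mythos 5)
Your proof is correct, and it reaches the same equivalence as the paper but by a genuinely different construction of the map $\Psi: R^+(H) \to K_G^+(X)$. The paper realizes the bundle canonically inside the induced module: setting $V = \ind_H^G W = \KK[G]\otimes_{\KK[H]}W$, it declares the fiber over $y = g\cdot x$ to be the subspace $g\otimes W \subset V$, which is well-defined independently of the representative $g$ precisely because $W$ is $H$-stable, so no coset representatives or cocycle verifications are ever needed. You instead trivialize the bundle by choosing representatives $g_y$ with $g_x = e$ and define the transition maps by the $H$-elements $g_{g\cdot y}^{-1} g\, g_y$, verifying the cocycle identity directly. Both are standard and correct; the paper's version is shorter and choice-free (the dependence on $x$ enters only through taking $V_x$, which is why the paper notes the isomorphism "depends on $x$" rather than on a full transversal), whereas yours makes the transition functions and the $H$-valued cocycle completely explicit, which is arguably more informative if one later wants to compute with the equivalence. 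One small thing worth making explicit in your write-up: when you say "a different choice of the $g_y$ produces an isomorphic bundle," it would be cleaner to observe that the isomorphism class of $\Psi(W)$ is independent of the transversal (as long as $g_x = e$), since that independence is implicitly used when you treat $\Psi$ as a well-defined map into isomorphism classes.
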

	\begin{proof}
		Let $W$ be a representation of $H$ and consider the induced representation $V = \ind_{H}^{G} = \KK[G] \otimes_\KK[H] W$. Define a vector bundle on $X$ as follows: for each $y \in X$, write $y = g\cdot x$ and let $(V)_y = g\cdot W = g\otimes W \subset V$. This depends only on $y$ and the action of $g$ takes $V_{x}$ to $V_{gx}$, so this is a vector bundle.
		Conversely, if $V$ is a vector bundle on $X$, define $W = V_{x}$. This is a well-defined $H$-module (since it is stable by $H$), so $W$ is a representation. These two constructions are mutually inverse.
	\end{proof}
	\begin{Remark} \begin{enumerate}
		\item The isomorphism above depends on $x$; choosing the point $y = g\cdot x$ as a basepoint instead, one obtains the isomorphic representation of $gHg^{-1}$ which is given by precomposing the action of $H$ on $V_x$ by conjugation with $g$.
		\item As a direct corollary, the Grothendieck group $K_G(X)$ of vector bundles and the ring $R(G)$ are isomorphic.
		\item Since every finite $G$-set can be written as a disjoint union of transitive $G$-set, this gives us a way to prove general facts about $K_G^+(X)$ by restricting to representation rings.
	\end{enumerate}

	\end{Remark}
	This vocabulary allows us to generalize the notions of restriction, transfer and tensor induction of representations. Let $f : X \rightarrow Y$ be a map of $G$-sets, given by a functor between the categories $X$ and $Y$ as described above. We define:
	\begin{enumerate}
		\item The \textit{restriction} $f^*: K_G^+(Y) \rightarrow K_G^+(X)$, as the composition of $f$ and $V$. In other words:
		\begin{align*}
			\left(f^*V\right)_{x} &:= V_{f(x)}, \\
			(f^*V)_{(g,x)} &:= V_{(g, f(x))}.
		\end{align*}
		Note that with the shorthand notation mentioned above, for $e \in V_{f(x)}$, we have $(f^*V)_{(g,x)}e = g\cdot e$, which corresponds to the same element in $f^*(V)$ as in $V$, only understood in a different fibre. This is particularly intuitive in the case where $f: G/H \to G/K$ corresponds to the inclusion of a subgroup $H \hookrightarrow K$.
		\item The \textit{induction} (or transfer) $f_*: K_G^+(X) \rightarrow K_G^+(Y)$,
		\begin{align*}
			f_*(V)_y &:= \bigoplus_{x \in f^{-1}(y)}(V_x), \\
			f_*(V)_{(g,y)} &:= \bigoplus_{x \in f^{-1}(y)} V_{(g,x)}.
		\end{align*}
		In shorthand notation we have $g \cdot \left( \bigoplus_x e_x\right) = \bigoplus_x g\cdot e_{g^{-1}x}$.
		\item The \textit{norm} (or tensor induction) $f: K_G^+(X) \rightarrow K_G^+(Y)$,
		\begin{align*}
			f_\sharp(V)_y &:= \bigotimes_{x \in f^{-1}(y)} V_x, \\
			f_\sharp V_{(g,y)} &:= \bigotimes_{x \in f^{-1}(y)}V_{(g,x)}.
		\end{align*}
		With $g\cdot \left(\bigotimes_x e_x \right) = \bigotimes_x g\cdot e_{g^{-1} x}$.
	\end{enumerate}
	Note that although we use its vocabulary and definitions, the full extent of equivariant $K$-theory is beyond our scope. Thus we will mostly assume that $X,Y$ are of the form $G/K$ for some subgroup $K\leq G$, and more often than not we will have $Y = G/G = \{*\}$.
	\begin{Remark}
		\begin{enumerate}
			\item Equivalently,
				\[
					f_*(V)_y \cong \bigoplus_{x \in f^{-1}(y) / \st(y)} \ind_{\st(x)}^{\st(y)}V_x.
				\]
			\item One can check that applying the norm formula to the case of $X = G/H$ and $Y = \{*\}$ yields the usual tensor induction, as defined eg. in \cite[\S 13A]{curtis}
		\end{enumerate}
	\end{Remark}
	The restriction and induction maps can be extended to $K_G$ in a straightforward way. By \cite[Th. 6.1]{tambara}, so can the tensor induction map. We explore in \Cref{additionformula} how to determine a formula for the tensor induction of virtual characters.

	\section{Graded character rings are not Mackey functors} \label{stable_elements}
	Graded character rings are functorial (see \cite[Lemma 4.1]{chetard}); in particular, if $H$ is a subgroup of $G$, restricting representations from $G$ to $H$ induces a well-defined homomorphism $R^*(G) \to R^*(H)$. Naturally, one wonders whether induction of representations from a subgroup $H$ of $G$ also preserves the Grothendieck filtration, and thus gives rise to a well-defined, additive induction map from $R^*(H)$ to $R^*(G)$. If so, then $R^*(-)$ satisfies the axioms of a cohomological Mackey functor (which we define below). In particular, an analogue to Cartan and Eilenberg's result on stable elements in cohomology (\cite[Th. XII.10.1]{cartan-eilenberg}) states that each $p$-primary component $R^*(G)_p$ of $R^*(G)$ is isomorphic to some subring of the graded character ring of its $p$-Sylow subgroup. This is not the case, and we produce below an example where this property fails. Thus $R^*(-)$ cannot be a Mackey functor.

A thorough treatment of the theory of Mackey functors is given in \cite{webb}; let us start with the definition. Let $R$ be a commutative ring and $G$ a group, and let $\Gset$ be the category of finite $G$-sets. A \textit{Mackey functor} is a pair $(S^*, S_*)$ of functors from $\Gset$ to $\Rmod$, where $S^*$ is contravariant and $S_*$ is covariant, and $S^*(-)$ and $S_*(-)$ are equal on objects. Additionally, we require the following axioms be satisfied:
\begin{enumerate}
	\item If
		\[ \xymatrix{ \Omega_1 \ar[r]^\alpha \ar[d]_\beta & \Omega_2 \ar[d]^\gamma \\
			\Omega_3 \ar[r]_\delta & \Omega_4 }
		\]
		is a pullback diagram of $G$-sets, then $S^*(\delta)S_*(\gamma) = S_*(\beta)S^*(\alpha)$.
	\item For every pair $\Omega, \Psi$ of finite $G$-sets, the morphism $S(\Omega)\oplus S(\Psi) \to S(\Omega\sqcup\Psi)$ obtained by applying $S_*$ to $\Omega \to \Omega\sqcup\Psi \leftarrow \Psi$, is an isomorphism.
\end{enumerate}


\begin{Remark}
	Alternatively, a Mackey functor $S$ can be viewed a a function from the subgroups of $G$ to $\Rmod$, with, for any two subgroups $H \leq K$ and $g\in G$, maps $\res^{H}_{K}: S(H) \to S(K)$, $\ind_{K}^{H}: S(K) \to S(H)$ and $c_g: S(H) \to S(^gH)$. The maps are required to satisfy the usual axioms governing conjugation, induction and restriction of representations, as detailed in \cite[\S 2]{webb}. If, additionally, the induction and restriction satisfy $\res_K^H(\ind_ K^H(x)) = [K:H]\cdot x$ then $S(-)$ is called a cohomological Mackey functor.
\end{Remark}
This second definition makes it easy to check that the (ungraded) character ring $R(-)$ is a cohomological Mackey functor. Thus, if induction preserves the filtration, then $R^*(-)$ is also a cohomological Mackey functor. The following general result should then be valid for $R^*(-)$.
\begin{Proposition}[Cartan-Eilenberg]
	Let $H\geq G$ be any subgroup, and suppose $S$ is a Mackey functor. Call an element $x \in S(H)$ stable if
	\[
		\res^{^gH}_{^gH\cap H}(c_g(x)) = \res^H_{^gH\cap H}(x)
	\]
	for all $g\in G$. If $G \geq H \geq Syl_p(G)$ where $Syl_p(G)$ is a $p$-Sylow of $G$, and $S(G)_{(p)}$ denotes the $p$-primary component of $S(G)$, then:
	\[
			\res_H^G : S(G)_{(p)} \longrightarrow S(H)_{(p)}
	\]
	is injective, and its image consists of the stable elements in $S(H)_{(p)}$.
\end{Proposition}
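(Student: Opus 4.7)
The strategy is the classical Cartan--Eilenberg argument, adapted to the categorical Mackey-functor setting. The two key inputs are the cohomological identity
\[
\ind_H^G \circ \res_H^G = [G:H]\cdot \mathrm{id}_{S(G)}
\]
together with the double coset (Mackey) formula for $\res_H^G \circ \ind_H^G$, which falls out of the pullback axiom applied to the fibre product $(G/H) \times_G (G/H) \cong \bigsqcup_{g \in H\backslash G/H} G/(H\cap {}^gH)$. The hypothesis $H \geq Syl_p(G)$ enters solely to force $[G:H]$ to be coprime to $p$, and hence invertible on $S(-)_{(p)}$.

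Injectivity is then essentially immediate: if $\res_H^G(x) = 0$ for some $x \in S(G)_{(p)}$, then $[G:H]\cdot x = \ind_H^G \res_H^G(x) = 0$, so $x = 0$ after inverting $[G:H]$.

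For the description of the image I would first observe that $\res_H^G(x)$ is automatically stable for any $x \in S(G)$: since $c_g$ acts trivially on $S(G)$ and restriction is functorial, both sides of the stability equation reduce to $\res_{{}^gH\cap H}^G(x)$. Conversely, given a stable $y \in S(H)_{(p)}$, the natural candidate for a preimage is $x := \tfrac{1}{[G:H]}\,\ind_H^G(y)$, which is well-defined in the $p$-localization. Expanding $\res_H^G \ind_H^G(y)$ by the double coset formula produces one summand $\ind_{H \cap {}^gH}^H \res_{H \cap {}^gH}^{{}^gH} c_g(y)$ per double coset. Stability rewrites this as $\ind_{H \cap {}^gH}^H \res_{H \cap {}^gH}^H(y)$, and the cohomological identity applied inside $H$ collapses each summand to $[H:H\cap {}^gH]\cdot y$. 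The combinatorial equality $\sum_{g \in H\backslash G/H}[H:H\cap {}^gH] = [G:H]$ then yields $\res_H^G \ind_H^G(y) = [G:H]\cdot y$, so $\res_H^G(x) = y$ as required.

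The main obstacle is bookkeeping rather than conceptual depth: one must extract the double coset formula cleanly from the pullback axiom and then carefully track, term by term, the interplay of the stability relation with the cohomological identity inside each $H\cap{}^gH$.
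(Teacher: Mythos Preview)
The paper does not actually prove this proposition; it merely cites \cite[Cor.~3.7 and Prop.~7.2]{webb} and, for the cohomology case, \cite[Th.~6.6]{adem-milgram}. Your write-up is the classical Cartan--Eilenberg argument and is carried out correctly, so in that sense you have supplied more than the paper does.

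There is one genuine hypothesis mismatch to flag. Your argument uses the identity $\ind_H^G\circ\res_H^G = [G:H]\cdot\mathrm{id}$ twice: once for injectivity, and once (inside each $H\cap{}^gH$) to collapse $\ind_{H\cap{}^gH}^H\res_{H\cap{}^gH}^H(y)$ to $[H:H\cap{}^gH]\cdot y$. That identity is the defining property of a \emph{cohomological} Mackey functor, and is not available for an arbitrary Mackey functor as the proposition is literally stated. The paper's own phrasing is sloppy here: the surrounding discussion makes clear that only cohomological Mackey functors are in play (the character ring and its graded version are cohomological), and Webb's Proposition~7.2 sits in the section on cohomological Mackey functors. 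So your proof matches the intended statement and all applications in the paper, but strictly speaking you should either add ``cohomological'' to the hypotheses or replace the index identity by the more general relative-projectivity/Burnside-ring argument that Webb uses for arbitrary Mackey functors.
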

	This result is a consequence of \cite[Cor. 3.7 and Prop. 7.2]{webb}; a more elementary proof, in the case of cohomology, can be found in \cite[Th. 6.6]{adem-milgram}. In the case of the alternating group $A_4$ of order $12$, we use the following corollary:
	\begin{Corollary}
		If $H := Syl_p(G)$ is abelian, then
		\[
			\res_H^G: \rr^*(G)_{(p)} \longrightarrow \rr^*(H)^{N_G(H)}
		\]
	is an isomorphism.
	\end{Corollary}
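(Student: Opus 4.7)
The plan is to apply the Cartan-Eilenberg Proposition with $S = \rr^*(-)$ (a Mackey functor by the earlier theorem), taking $H$ itself as the intermediate subgroup. This immediately provides an injection $\res_H^G : \rr^*(G)_{(p)} \hookrightarrow \rr^*(H)_{(p)}$ whose image is the subring of stable elements of $\rr^*(H)_{(p)}$. It then suffices to identify the codomain $\rr^*(H)_{(p)}$ with $\rr^*(H)$ (up to understanding the degree-zero part), and to show that the stable elements coincide with $\rr^*(H)^{N_G(H)}$.

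For the first identification, I would argue that $\rr^*(H)$ is torsion in positive degrees, with exponent dividing a power of $|H|$; this is inherited from the analogous statement for $R^*(H)$, since $F^n/F^{n+1}$ is built out of $\Gamma^\bullet$-subquotients whose torsion is controlled by $|H|$. As $|H|$ is a power of $p$, the positive-degree part is already $p$-primary torsion, so localization at $p$ acts as the identity there.

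The crux is the identification of stable elements with $N_G(H)$-invariants. One direction is immediate: evaluating the stability condition at $g \in N_G(H)$ (where ${}^gH \cap H = H$) yields precisely $c_g(x) = x$. For the converse, I would invoke Burnside's fusion theorem, available because $H$ is an abelian Sylow $p$-subgroup: any two subsets of $H$ conjugate in $G$ are already conjugate by an element of $N_G(H)$. Fixing $g \in G$ and $K := H \cap {}^gH$, Burnside produces $n \in N_G(H)$ such that $g^{-1}xg = n^{-1}xn$ for every $x \in K$; equivalently, $g = cn$ with $c := gn^{-1} \in C_G(K)$. Then $c_g = c_c \circ c_n$, so the $N_G(H)$-invariance $c_n(x) = x$ reduces the stability condition to the equality $\res^{{}^cH}_K \circ c_c = \res^H_K$, evaluated at $x$. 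This last identity holds because $c$ centralizes $K$: at the level of characters, and hence of $\rr^*$ by functoriality, conjugation by an element centralizing a subgroup acts as the identity on the restricted representations.

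I expect the technical obstacle to lie in that final verification: namely, translating "conjugation by a centralizer is trivial" into a clean Mackey-functor identity on $\rr^*$, rather than on $R$. The underlying reason is that $\rr^*(-)$ is a cohomological Mackey functor derived from the character ring, on which inner automorphisms (and more generally automorphisms by centralizing elements, when applied to restrictions) act trivially; once this is granted, Burnside's theorem provides exactly the factorization needed to collapse the general stability condition to $N_G(H)$-invariance.
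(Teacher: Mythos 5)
Your proof is correct and matches the paper's intent: the paper itself gives no argument for this corollary and simply cites \cite[Th. 6.8]{adem-milgram}, which is exactly the Burnside-fusion reduction you wrote out. The final "technical obstacle" you flag is not actually one: for $c \in C_G(K)$, the conjugation map $c_c$ on $R(K)$ is precomposition with the automorphism $k \mapsto c^{-1}kc = k$ of $K$, i.e., the identity, and since conjugation preserves the saturated filtration by functoriality it follows immediately that $c_c = \mathrm{id}$ on $\rr^*(K)$.
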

	 We show that the condition of surjectivity on stable elements fails. Note that the following computation relies heavily on the techniques developed in \cite{chetard}, to which we refer the reader for any details. We also use the following result:
	\begin{Lemma}[{\cite[Prop. 4.3]{chetard}}] \label{R*(C2C2)}
		Let $C_2$ be the cyclic group of order $2$, and let $\rho_1, \rho_2$ be the generating representations for $R_\CC(C_2\times 1), R_\CC(1\times C_2)$ respectively. Then,
		\[
			R^*_\CC(C_2\times C_2) = \frac{\ZZ[t_1,t_2]}{(2t_1,2t_2,t_1^2t_2-t_1t_2^2)}
		\]
		where $t_i = c_1(\rho_i)$.
	\end{Lemma}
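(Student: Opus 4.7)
My plan is to exhibit a surjection $A := \ZZ[t_1,t_2]/(2t_1,\,2t_2,\,t_1^2 t_2 - t_1 t_2^2) \twoheadrightarrow R^*_\CC(C_2\times C_2)$ and then verify degreewise bijectivity via a direct computation of the Grothendieck filtration. For the surjection, I note that all four irreducible characters of $C_2\times C_2$ are one-dimensional, and for any one-dimensional character $\chi$ one has $\gamma_t(\chi - 1) = 1 + (\chi-1)t$, so $c_n(\chi) = 0$ for $n\geq 2$. Since first Chern classes are additive on tensor products of line bundles, $c_1(\rho_1\rho_2) = t_1 + t_2$, and therefore $t_1,t_2$ generate $R^*_\CC(C_2\times C_2)$ as a ring. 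For the defining relations, I expand $\rho_i^2 = 1$ to get $(\rho_i - 1)^2 = -2(\rho_i-1)$ in $R_\CC(C_2\times C_2)$, so $2(\rho_i-1) \in \Gamma^2$ and hence $2t_i = 0$ in $R^*_\CC$; similarly,
\[
  (\rho_1-1)^2(\rho_2-1) = -2(\rho_1-1)(\rho_2-1) = (\rho_1-1)(\rho_2-1)^2
\]
holds already in $R_\CC(C_2\times C_2)$, and projects in $\Gamma^3/\Gamma^4$ to $t_1^2 t_2 = t_1 t_2^2$.

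For injectivity, I would use that on any abelian group the Grothendieck filtration coincides with the $I$-adic filtration, where $I = \ker\epsilon$. This follows from the identity $\gamma_t\bigl(\sum a_i(\chi_i - 1)\bigr) = \prod_i (1 + (\chi_i-1)t)^{a_i}$, valid because every $\chi_i$ is one-dimensional, which forces $\gamma^k(x) \in I^k$ for all $x \in I$ and hence $\Gamma^n \subseteq I^n$ (the reverse inclusion is automatic). Presenting $R_\CC(C_2\times C_2)$ as the free abelian group on $\{1, t_1, t_2, t_1 t_2\}$ with the rule $t_i^2 = -2t_i$, a short induction on $n$ gives
\[
  I^n = \bigl\{\, 2^{n-1} a\, t_1 + 2^{n-1} b\, t_2 + 2^{n-2} c\, t_1 t_2 : a,b,c \in \ZZ\,\bigr\} \quad (n\geq 2),
\]
so that $I^n/I^{n+1} \cong (\ZZ/2)^3$ for $n\geq 2$ and $I/I^2 \cong (\ZZ/2)^2$. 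A direct inspection of $A$ yields the same decomposition in each degree: the relation $t_1^2 t_2 = t_1 t_2^2$ collapses every mixed monomial $t_1^i t_2^j$ with $i,j\geq 1$ and $i+j=n$ to a single class, which together with $t_1^n$ and $t_2^n$ accounts for three $\ZZ/2$-summands in each degree $n \geq 2$. Hence the surjection $A \to R^*_\CC(C_2\times C_2)$ is bijective in every degree, and so an isomorphism.

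The hard part will be the injectivity check: I must rule out any hidden relations in higher degrees of $R^*_\CC(C_2\times C_2)$. The identification $\Gamma^n = I^n$ for abelian groups is what makes this tractable, reducing the filtration computation to an elementary ideal-power calculation in the explicit ring $\ZZ[t_1,t_2]/(t_1^2+2t_1,\,t_2^2+2t_2)$; without it, one would have to grapple directly with the $\gamma$-operations and iterated products of their outputs.
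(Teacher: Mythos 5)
The paper does not actually prove this lemma; it cites \cite[Prop.\ 4.3]{chetard} and quotes the result. So there is no in-paper proof to compare against, and the review must simply assess correctness.

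Your argument is correct and complete. The surjection step is fine: for one-dimensional characters the series $\gamma_t(\chi-1)=1+(\chi-1)t$ kills all higher Chern classes, and the additivity of $c_1$ on line bundles reduces the ring to the two generators $t_1,t_2$. The two types of relations are verified in $R_\CC(C_2\times C_2)$ itself (not just modulo the filtration), which is exactly what is needed to conclude they hold in the graded quotient. For injectivity, the key point you invoke --- that for an abelian group the Grothendieck filtration equals the $I$-adic filtration, because $I$ has a $\ZZ$-basis of the form $\chi_i-1$ and $\gamma_t$ is a monoid morphism into $1+tR[[t]]$ --- is valid and standard; one should just note, as you implicitly do, that negative exponents $(1+zt)^{-a}$ still have coefficients $\binom{-a}{k}z^k\in I^k$, so $\gamma^k(x)\in I^k$ without any positivity hypothesis on $x$. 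Your closed form $I^n=2^{n-1}\ZZ t_1\oplus 2^{n-1}\ZZ t_2\oplus 2^{n-2}\ZZ t_1t_2$ (with $t_i=\rho_i-1$) checks out by induction using $t_i^2=-2t_i$, giving $I^n/I^{n+1}\cong(\ZZ/2)^3$ for $n\geq 2$ and $(\ZZ/2)^2$ for $n=1$. The degreewise count in $A$ matches (the cubic relation collapses all mixed monomials in each degree), and a surjection between finite abelian groups of the same order is an isomorphism, so the conclusion follows. One cosmetic remark: you overload $t_i$ to mean both $\rho_i-1\in R_\CC$ and $c_1(\rho_i)\in R^*_\CC$; this is harmless here but worth flagging in a polished write-up.
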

	Let $A_4$ be generated by the permutations $(12)(34)$ and $(123)$. There are $4$ irreducible complex representations of $A_4$:
	\begin{itemize}
		\item Of dimension 1: the trivial representation $1$, and the representations $\rho$ (resp. $\bar{\rho}$) that send $(123)$ to $e^{2i\pi/3}$ (resp. $e^{-2i\pi/3}$) and $(12)(34)$ to $1$.
		\item Of dimension 3: the standard representation $\theta$, which is the quotient of the representation $\bar{\theta}$ acting on $\CC^4$ by permutation of the basis vectors, by the trivial representation. The character of $\theta$ sends 3-cycles to $0$ and $(12)(34)$ to $-1$.
	\end{itemize}
	There are the following relations between the representations:
	\begin{align}
		\rho^2 &= \bar{\rho} \\
		\rho\theta &= \theta \\
		\theta^2 &= 1+\rho+\bar{\rho}+2\theta \label{A4_relationtheta2}
	\end{align}
	Additionally $\lambda^2(\theta) = \theta$ (by a direct calculation of the exterior power) and $\mathrm{det }(\theta) = 1$.
	\begin{Lemma} \label{A4_gradrepring}
		Let $x = c_1(\rho)$ and $y = c_2(\theta)$, then
		\[
			R^*_\CC(A_4) = \frac{\ZZ[x,y]}{(3x,12y,4y+x^2)}.
		\]
	\end{Lemma}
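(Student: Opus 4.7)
The plan is to exhibit $R^*_\CC(A_4)$ via the given presentation by first establishing the generators and relations, then verifying injectivity by computing each graded piece.

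First, I would pin down the generating set. Among the Chern classes of irreducibles, the product formula on one-dimensional characters gives $c_1(\bar\rho) = c_1(\rho^2) = 2c_1(\rho) = 2x$; the hypothesis $\det\theta = 1$ forces $c_1(\theta) = 0$; and a direct $\lambda$-ring computation using $\lambda^2\theta = \theta$ and $\lambda^3\theta = 1$ gives $c_3(\theta) = \gamma^3(\theta - 3) = \lambda^3(\theta - 1) = 0$. Hence $R^*_\CC(A_4)$ is generated by $x$ and $y$.

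Next, I would derive the relations by taking total Chern classes in the ring identities. From $\rho^3 = 1$ we obtain $(1+x)^3 = 1$, giving $3x = 0$. From $\theta \otimes \theta = 1 \oplus \rho \oplus \bar\rho \oplus 2\theta$ I would compute both sides via the splitting principle: writing the Chern roots of $\theta$ as $\alpha_1, \alpha_2, \alpha_3$, the left-hand side factors as $\prod_i(1+2\alpha_i) \cdot [\prod_{i<j}(1+\alpha_i+\alpha_j)]^2 = (1+4y)\cdot c(\Lambda^2\theta)^2 = (1+4y)(1+y)^2$ (using $\Lambda^2\theta = \theta$), while the right-hand side is $(1+x)(1+2x)(1+y)^2$. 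Cancelling the invertible factor $(1+y)^2$ in $\widehat{R}^*_\CC(A_4)$ and using $3x=0$ gives $2x^2 = 4y$; combined with $3x^2 = 0$ this yields $x^2 = -4y$, from which $12y = -3x^2 = 0$ is automatic. These relations produce a well-defined surjection $\phi: \ZZ[x,y]/(3x, 12y, 4y+x^2) \to R^*_\CC(A_4)$.

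The main obstacle is injectivity of $\phi$, which I would establish by computing each graded piece $\Gamma^n/\Gamma^{n+1}$ directly. Setting $a = \rho - 1$ and $b = \theta - 3$, the key observation is that among the $\lambda$-generators of $I$, the only non-zero values of $\gamma^k$ are $\gamma^1(a) = a$, $\gamma^1(b) = b$, and $\gamma^2(b) = -b$; consequently $\Gamma^n$ is the $\ZZ$-subgroup of $R_\CC(A_4)$ generated by the monomials $a^k b^s$ with $k + 2s \geq n$. Reducing each such monomial to the basis $\{1, a, a^2, b\}$ of $R_\CC(A_4)$ using the identities $ab = -3a$, $a^3 = -3a - 3a^2$, and $b^2 = 3a + a^2 - 4b$ (derived from the given ring relations), one obtains an explicit sublattice whose Smith normal form determines $[\Gamma^n : \Gamma^{n+1}]$. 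The cleanest route would be to establish by induction a two-periodicity $\Gamma^{n+2} = (b) \cdot \Gamma^n + \Gamma^{n+1}$ compatible with the relation $x^2 = -4y$, reducing the infinite computation to explicit verifications in degrees $\leq 3$ (yielding $\ZZ$, $\ZZ/3$, $\ZZ/12$, $\ZZ/3$ respectively, matching the graded pieces of $\ZZ[x,y]/(3x, 12y, 4y+x^2)$).
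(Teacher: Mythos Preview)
Your generators-and-relations half is essentially the paper's: you identify $x,y$ as the only surviving Chern-class generators, and your total-Chern-class computation of $\theta^{\otimes 2}$ via the splitting principle lands on the same relation $x^2+4y=0$ (the paper writes $c_1(\bar\rho)=-x$ rather than $2x$, but modulo $3x$ this is the same, and your $(1+4y)(1+y)^2$ agrees with the paper's $1+6y+9y^2+4y^3$).

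Where you genuinely diverge is the injectivity step. The paper does \emph{not} compute $\Gamma^n$ as a sublattice of $R(A_4)$; instead it (i) restricts $y$ to $C_2\times C_2$ to force the order of $y$ to be even, hence $6$ or $12$, and then (ii) invokes the ``continuity method'' of \cite[\S6]{chetard}: it exhibits an admissible approximation $\widetilde\Gamma^n$ and evaluates at $(12)(34)$, using $2$-adic valuations to exclude $6Y^k\in\Gamma^{2k+1}$; finally (iii) restrictions to the two cyclic subgroups rule out further relations. Your route---observing that $\Gamma^n$ is the $\ZZ$-span of $a^kb^s$ with $k+2s\ge n$ (which is correct, once one checks that $C_1(\bar\rho)=a^2+2a$ and $C_1(\theta)=b=-C_2(\theta)$ contribute nothing new), and then reducing via $ab=-3a$, $a^3=-3a-3a^2$, $b^2=3a+a^2-4b$ to compute successive quotients---is more elementary and self-contained, at the cost of heavier bookkeeping. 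Either approach works; the paper's buys leverage from functoriality and the topology on $R(G)$, yours avoids importing that machinery.

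One caution: your periodicity statement ``$\Gamma^{n+2}=(b)\cdot\Gamma^n+\Gamma^{n+1}$'' cannot be right as written, since $\Gamma^{n+1}\supsetneq\Gamma^{n+2}$. What you presumably mean (and what is true) is that multiplication by $b$ induces an isomorphism $\Gamma^n/\Gamma^{n+1}\to\Gamma^{n+2}/\Gamma^{n+3}$ for $n\ge1$; equivalently $\Gamma^{n+2}=b\Gamma^n+\Gamma^{n+3}$. This is the substantive point, and establishing it still requires checking that $b\cdot a$ and $b\cdot b$ are nonzero in the appropriate quotients, so the ``reduction to degrees $\le3$'' is not quite as automatic as you suggest.
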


	\begin{proof} The graded character ring $R^*_\CC(A_4)$ is generated by all Chern classes of irreducible characters of $A_4$; we start by ridding ourselves of redundant generators. Let $x = c_1(\rho)$ and $y = c_2(\theta)$. Then $x = c_1(\rho) = -c_1(\bar{\rho})$ and $3x = 0$. Moreover $c_1(\theta) = c_1(\mathrm{det } \theta) = c_1(1) = 0$, so $x$ generates $R^1(A_4)$, and $y, x^2$ generate $R^2(A_4)$. As for the degree $3$ generator $c_3(\theta)$, we have:
		\[
			C_3(\theta) = \gamma^3(\theta-3) = \lambda^3(\theta-1) = -1 +\theta -\lambda^2(\theta) + 1 = 0,
		\]
		so there is no additional generator in degree 3 and $R^*(A_4)$ is generated by $x,y$. We have $3x = 0$ by the above, and $12y = 0$ since the order of $A_4$ kills $R^*(A_4)$ (see \cite[Prop. 2.6]{chetard}). We now turn to the relation $4y+x^2 = 0$: applying the total Chern class $c_T$ to both sides of (\ref{A4_relationtheta2}) yields:
		\begin{align*}
			c_T(\theta^2) &= c_T(1+\rho+\bar{\rho}+2\theta) \\
			&= c_T(\rho)c_T(\bar{\rho})c_T(\theta)^2 \\
			&= (1+xT)(1-xT)(1+yT^2)^2 \\
			&= 1+ (2y-x^2)T^2 + (y^2-2yx^2)T^4 + zy^2T^6.	\numberthis	\label{A4_totalchern1}
		\end{align*}
		On the left-hand side, use the splitting principle (\cite[Prop. 2.3]{chetard}): we write the character $\theta$ as a sum $\theta_1+\theta_2+\theta_3$ of linear characters. Looking only at even terms of degree $\leq 6$ and keeping in mind that $c_1(\theta) = c_3(\theta) = 0$, we get:
		\begin{align*}
			c_T(\theta^2) &= c_T((\sigma_1+\sigma_2+\sigma_3)^2) \\
				&= 1 + 6yT^2 + 9y^2T^4 + 4y^3T^6 \numberthis \label{A4_totalchern2}
		\end{align*}
	  Equating \ref{A4_totalchern1} and \ref{A4_totalchern2} yields $4y = -x^2$. In particular this means that the order of $y$ is a multiple of $3$. To obtain more information, we can use the restriction $\res^{A_4}_{C_2\times C_2}: R^*(A_4) \to R^*(C_2\times C_2)$ $y$ to $H:= C_2\times C_2$. By \Cref{R*(C2C2)} :
		\[
			R^*(\ZZ/2\times \ZZ/2) = \frac{\ZZ[t_1,t_2]}{(2t_1,2t_2,t_1^2t_2-t_1t_2^2)}.
		\]
		We have $\res_H(y) = t_1^2 + t_1t_2 + t_2^2$, which has order $2$. So the order of $y^i$ is a multiple of $2$, that is, it is either $6$ or $12$. To conclude, we use the continuity method described in (\cite[\S 6]{chetard}). Let $X = C_1(\rho) = \rho -1$ and $Y = C_2(\theta) = 3 - \theta$, and let
		\[
			\widetilde{\Gamma}^n = \begin{cases} \langle Y^{n/2}\rangle &n \text{ even} \\
				\langle XY^{(n-1)/2}\rangle &n \text{ odd}
			\end{cases} \ \ \ .
		\]
		Then $\widetilde{\Gamma}^n$ is an admissible approximation for $\Gamma$. The evaluation $\phi_{(12)(34)}$ sends $X$ to $0$ and $Y$ to $-4$, and thus is continuous with respect to the $2$-adic topology on $\ZZ$. Suppose, for a contradiction, that $6Y^k \in \Gamma^{2k+1} = \widetilde{\Gamma}^{2k+1} + \Gamma^M$ for some large $M$. Since $2k+1$ is odd, $\widetilde{\Gamma}^{2k+1}$ is generated by $XY^k$, which evaluates to zero. Thus we must have $v_2(6Y^k) \geq 2k+2$, but $v_2(6Y^k) = 2k+1$. So $Y^k$ has additive order $12$.
		Finally, restricting $x$ to the subgroup generated by $(123)$, and $yx$ to that generated by $(12)(34)$, shows that there are no additional relations.
	\end{proof}

	\begin{Theorem} \label{R*notmackey}
		$R^*(-)$ is not a Mackey functor.
	\end{Theorem}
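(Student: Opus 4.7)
The plan is to argue by contradiction. Assume $R^*(-)$ is a Mackey functor. Then \Cref{stableelementmethod}, applied to $G = A_4$ with its abelian $2$-Sylow $H = C_2 \times C_2$ (which is normal in $A_4$, so $N_G(H) = G$ and $G/H \cong C_3$), forces the restriction
\[
  \res_H^G : R^*(A_4)_{(2)} \longrightarrow R^*(C_2 \times C_2)^{C_3}
\]
to be an isomorphism of graded rings. The goal is to show this fails already in degree $3$, where the source vanishes but the target does not.

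For the source, \Cref{A4_gradrepring} gives $R^*(A_4) = \ZZ[x,y]/(3x,12y,4y+x^2)$ with $|x|=1$ and $|y|=2$. Multiplying $4y+x^2 = 0$ by $x$ yields $x^3 = -4xy$, so $R^3(A_4)$ is generated by $xy$, and the relation $3x = 0$ forces $3xy = 0$. Hence $R^3(A_4)$ is $3$-torsion, and inverting the odd primes gives $R^3(A_4)_{(2)} = 0$.

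For the target, \Cref{R*(C2C2)} gives $R^*(H) = \ZZ[t_1, t_2]/(2t_1, 2t_2, t_1^2 t_2 - t_1 t_2^2)$; in degree $3$, using the relation $t_1^2 t_2 = t_1 t_2^2$, one has $R^3(H) \cong \FF_2\{t_1^3,\ t_1^2 t_2,\ t_2^3\}$. A generator of $A_4/H \cong C_3$ cyclically permutes the three nontrivial elements of $H$, hence also the three nontrivial linear characters $\rho_1, \rho_2, \rho_1\rho_2$, acting on first Chern classes by the $\FF_2$-linear substitution $\sigma: t_1 \mapsto t_2,\ t_2 \mapsto t_1 + t_2$. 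Writing a general degree-$3$ element $a t_1^3 + b t_1^2 t_2 + c t_2^3$ and equating it with its image under $\sigma$ reduces to a small linear system over $\FF_2$ whose only solutions are the multiples of $t_1^3 + t_1^2 t_2 + t_2^3$, so $R^3(H)^{C_3} \cong \ZZ/2$. This contradicts surjectivity of the restriction map. The only nontrivial step is identifying the conjugation action correctly and then performing the short $\FF_2$-invariant calculation.
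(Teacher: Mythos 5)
Your proof is correct and takes essentially the same route as the paper's: both work with $G = A_4$, its normal $2$-Sylow $H = C_2\times C_2$, and the failure of the stable-element surjectivity in degree $3$, where the same invariant $t_1^3 + t_1^2 t_2 + t_2^3$ appears. The paper phrases the failure by observing that $\res^G_H(x) = 0$, so the image of restriction is concentrated in even degrees, while you instead note directly that $R^3(A_4)_{(2)} = 0$ from $x^3 = -4xy$ and $3xy = 0$; these are the same observation dressed slightly differently.
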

	\begin{proof}
		Let $G = A_4$, and consider its normal, abelian $2$-Sylow $H = C_2 \times C_2$. Since $\res^{G}_{H}(x) = 0$, the image of $R_\CC^*(G)$ under the restriction map
		\[
			\res_H^G: R^*_\CC(G) \longrightarrow  R^*_\CC(H)
		\]
		is generated by powers of $\res_H^G(y) = t_1^2 + t_1t_2 + t_2^2$. On the other hand, $G$ acts on $R^*_\CC(H)$ by cyclic permutations of the elements $t_1, t_2,t_1+t_2$. The element $z = t_1^3 + t_2^3+ t_1^2t_2$ is invariant under this action. But $z$ is not a combination of powers of $t_1^2 + t_1t_2 + t_2^2$ since it has odd degree, and thus does not belong to the image of the restriction map. Therefore:
		\[
			\Ima(\res_H^G) \subsetneq R^*_\CC(H)^{N_G(H)}
		\]
		which means that $R^*(-)$ is not a Mackey functor.
	\end{proof}


\section{Saturated rings} \label{saturatedrings}
\Cref{R*notmackey} tells us that induction of representations is not compatible with the Grothendieck filtration. This prompts us to define a modified filtration, taking into account all images of Chern classes of subgroups of $G$ under the induction map. This new filtration retains much of the information of the Grothendieck filtration: in fact, both induce the same topology on $R(G)$. In the sequel, let $H,K$ denote two arbitrary subgroups of $G$.
On the $\lambda$-ring $R(G)$, define the \textit{saturated filtration} $\lbrace F^n \rbrace_{n}$ as follows:
\[
	F^n(G) = \sum_{H\leq G} \ind_H^G(\Gamma^n(H)).
\]
This means that $F^n(G)$ is generated by elements of the form:
\[
	x = \ind_{H}^G(\gamma^{i_1}(\rho_{1})\cdots \gamma^{i_m}(\rho_{m})), \ \ \ i_1+\cdots+i_m \geq n
\]
with each $\rho_\ell$ an irreducible representation of $H$. By definition, induction of representations preserves the filtration $F$.
\begin{Lemma}\label{saturatedfiltration} Let $I = \ker \epsilon$ be the augmentation ideal.
	\begin{enumerate}
		\item Induction and restriction of characters preserve the filtration $F$.
		\item $F^i(G)\cdot F^j(G) \subseteq F^{i+j}(G)$.
		\item $F^0(G) = R(G), \ F^1(G) = I$.
	\end{enumerate}
\end{Lemma}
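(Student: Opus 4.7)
The plan is to derive each item in turn from three standard inputs: transitivity of induction ($\ind_K^G\circ\ind_H^K = \ind_H^G$ for $H\leq K\leq G$), the Mackey double coset formula for $\res\circ\ind$, and the Frobenius projection formula $\ind_H^G(x\cdot\res_H^G(y)) = \ind_H^G(x)\cdot y$. I will also use that the Grothendieck filtration $\Gamma^\bullet$ is preserved by restriction and by conjugation, and is itself multiplicative; these are recalled in \cite{chetard}.

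For (1), the fact that induction preserves $F$ is essentially built into the definition: a generator of $F^n(K)$ has the form $\ind_H^K(z)$ with $H\leq K$ and $z\in\Gamma^n(H)$, and by transitivity $\ind_K^G\ind_H^K(z) = \ind_H^G(z)\in F^n(G)$. For restriction, I apply the Mackey formula to $\res^G_K\ind_H^G(z)$, which decomposes it as a sum of terms
\[
\ind_{K\cap\,{}^gH}^K\,\res^{{}^gH}_{K\cap\,{}^gH}(c_g z),
\]
indexed by $K\backslash G/H$. Since $\Gamma^\bullet$ is stable under restriction and conjugation, each $\res^{{}^gH}_{K\cap\,{}^gH}(c_g z)$ lies in $\Gamma^n(K\cap\,{}^gH)$, so its induction up to $K$ lies in $F^n(K)$ by definition.

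For (2), a generator of $F^i(G)\cdot F^j(G)$ may be written as $\ind_H^G(x)\cdot\ind_K^G(y)$ with $x\in\Gamma^i(H)$ and $y\in\Gamma^j(K)$. Projection gives
\[
\ind_H^G(x)\cdot\ind_K^G(y) \;=\; \ind_H^G\bigl(x\cdot\res^G_H\ind_K^G(y)\bigr).
\]
By (1), $\res^G_H\ind_K^G(y)\in F^j(H)$, hence is a sum of generators $\ind_L^H(w)$ with $L\leq H$ and $w\in\Gamma^j(L)$. A second application of projection yields
\[
x\cdot\ind_L^H(w) \;=\; \ind_L^H\bigl(\res^H_L(x)\cdot w\bigr),
\]
and $\res^H_L(x)\cdot w\in\Gamma^{i+j}(L)$ by multiplicativity of the Grothendieck filtration. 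Two nested inductions then place the total product in $F^{i+j}(G)$.

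For (3), taking $H=G$ in the definition shows $R(G)=\Gamma^0(G)\subseteq F^0(G)$, and the reverse inclusion is obvious; similarly $I=\Gamma^1(G)\subseteq F^1(G)$. Conversely, $\ind_H^G$ multiplies augmentation by $[G:H]$, so it sends $\Gamma^1(H) = \ker(\epsilon_H)$ into $\ker(\epsilon_G) = I$, yielding $F^1(G)\subseteq I$. The only step with genuine content is the invocation of the Mackey formula in (1); once that is granted, (2) is a two-step projection argument and (3) is bookkeeping, so I do not anticipate a real obstacle.
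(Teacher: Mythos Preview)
Your proof is correct and tracks the paper's argument closely: part (i) via the Mackey double coset formula, part (iii) via the compatibility of $\ind$ with augmentation. The one point of divergence is part (ii): the paper argues by induction on $|G|$, reducing $\ind_H^G(x)\cdot\ind_K^G(y)$ via projection to a product in $F^i(H)\cdot F^j(H)$ for a proper $H<G$ and then invoking the inductive hypothesis; you instead unfold $F^j(H)$ explicitly and apply projection a second time at the level of $L\leq H$, landing directly in $\Gamma^{i+j}(L)$. Your two-step projection is arguably cleaner, since it avoids the implicit base case (when $H=K=G$) that the paper's induction leaves to the reader, while the paper's version is more modular. Both rest on the same ingredients and neither offers a real advantage over the other.
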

\begin{proof}
	\begin{enumerate}
	 	\item We only need to check that restriction does preserve the filtration. Let $x\in F^i(G)$; we prove that if $x = \ind_H^G(y)$ with $y \in \Gamma^i(H)$ then $\res^G_H(x) \in F^i(K)$ for all $K \leq G$. We use the Mackey double coset formula (\cite[Prop. 7.3.22]{serre}): let $S$ be a set of $(H,K)$-double coset representatives of $G$. For $s\in S$, let
	 	\[
	 		{}_sH = sHs^{-1}\cap K \leq K
	 	\]
	 	and let
	 	\[
	 		y^s(g) = y(s^{-1}gs), \ \ \ \text{for all} \ \ \ g \in {}_sH.
	 	\]
	 	Then $y^s$ is a representation of ${}_sH$ and
	 	\[
	 		\res^G_K\ind_H^G(y) = \sum_{s\in K \backslash G / H} \ind_{{}_sH}^K(y^s)
	 	\]
	 	Note that each $y^s$ is in $\Gamma^i(^sH)$ by functoriality of $R^*(-)$. Thus $\res^G_K(x) \in F^i(K)$.

	 	\item It is sufficient to prove that if $\tilde{x} = \ind_H^G(x)$ and $\tilde{y} = \ind_K^G(y)$ with $x\in \Gamma^i(H)$ and $y \in \Gamma^j(K)$ then $\tilde{x}\cdot\tilde{y} \in F^{i+j}(G)$. We proceed by induction on the order of $G$. Suppose $H < G$ is a proper subgroup. By the projection formula (\cite[\S 7.2]{serre}):
	 	\[
	 		\tilde{x}\cdot\tilde{y} = \ind_H^G(x)\ind_K^G(y) = \ind_H^G(x\cdot\res^G_H\ind_K^G(y)).
	 	\]
	 	Since restriction preserves the filtration and $\ind_K^G(y) \in F^j(G)$, we have $\res^G_H\ind_K^G(y) \in F^j(H)$, so:
	 	\[
	 		x\cdot\res^G_H\ind_K^G(y) \in F^i(H)F^j(H) \subseteq F^{i+j}(H).
	 	\]
	 	where the inclusion is true by induction. In conclusion:
	 	\[
	 		\tilde{x}\cdot\tilde{y} = \ind_H^G(x\cdot\res^G_H\ind_K^G(y)) \in F^{i+j}(G).
	 	\]

	 	\item The fact that $F^0(G) = R(G)$ comes from the fact that $\Gamma^0(G) = R(G)$. For $F^1(G)$, simply observe that, since $\varepsilon(\ind_H^G(\rho)) = [G:H]\varepsilon(\rho)$:
	 	\[
			F^1(G) = \sum_{H \leq G} \ind_H^G(\ker(\varepsilon\vert_H)) = \ker(\varepsilon) = I.
		\]
	\end{enumerate}
\end{proof}

\Cref{saturatedfiltration} lets us define the \textit{saturated graded ring} associated to $G$ as:
	\[
		\rr^*(G) = \bigoplus_{i\geq 0}F^i(G)/F^{i+1}(G).
	\]
	Note that, as representation rings are of the form $K_G(X)$ for some transitive $G$-set $X$, we can extend the definition of this filtration to $K_G(X)$ for a general finite $G$-set $X$. Then the above discussion means that for every maps of finite $G$-sets $f: X \to Y$, the maps $f_*$ and $f^*$ defined in \Cref{definitions} are compatible with the saturated filtration.

\begin{Theorem}\label{rr*mackey}
	The saturated graded ring $\rr^*(-): \Gset \to \ZZ-\Mod$ is a Mackey functor.
\end{Theorem}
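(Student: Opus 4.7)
The plan is to derive the Mackey functor axioms for $\rr^*(-)$ from the fact that the ungraded character ring $R(-)$ is already a cohomological Mackey functor, using \Cref{saturatedfiltration} as the key bridge. The real content of the theorem has already been absorbed into that lemma; what remains is a descent argument from $R(-)$ to its associated graded with respect to the saturated filtration.

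First I would define the covariant and contravariant parts of $\rr^*(-)$. For a map $f \colon X \to Y$ of finite $G$-sets, the induction $f_*$ and restriction $f^*$ from \Cref{definitions} are already defined on $K_G(X) = R(-)$. By \Cref{saturatedfiltration}(i), extended from subgroup inclusions to arbitrary $G$-set maps via the decomposition of finite $G$-sets into transitive pieces (disjoint unions of $G/H$'s), both $f_*$ and $f^*$ carry $F^n(X)$ into $F^n(Y)$ and $F^n(Y)$ into $F^n(X)$ respectively. They therefore descend to additive maps $\rr^*(X) \to \rr^*(Y)$ and $\rr^*(Y) \to \rr^*(X)$. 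Functoriality in each variance follows immediately from functoriality of restriction and induction on $R(-)$, since the projection $F^\bullet \twoheadrightarrow F^\bullet/F^{\bullet+1}$ is natural.

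Next I would check the pullback axiom. Given a pullback square of finite $G$-sets with maps $\alpha,\beta,\gamma,\delta$ as in the definition, the identity
\[
  S^*(\delta)\,S_*(\gamma) = S_*(\beta)\,S^*(\alpha)
\]
holds at the level of $R(-)$ because $R(-)$ is a Mackey functor; concretely, on transitive pieces this reduces to the Mackey double coset formula already invoked in the proof of \Cref{saturatedfiltration}(i). Each of the four maps involved preserves the saturated filtration, so both compositions send $F^n$ into $F^n$; the already-established equality in $R(-)$ therefore passes to the quotients $F^n/F^{n+1}$, and hence to $\rr^*$.

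Finally the additivity axiom is immediate: a disjoint union $\Omega \sqcup \Psi$ of finite $G$-sets gives $R(\Omega \sqcup \Psi) \cong R(\Omega) \oplus R(\Psi)$, and by its very definition the saturated filtration on the left is the direct sum of the saturated filtrations on $\Omega$ and $\Psi$ (induction from any subgroup factors through exactly one summand). Passing to the associated graded yields the required isomorphism $\rr^*(\Omega) \oplus \rr^*(\Psi) \xrightarrow{\sim} \rr^*(\Omega \sqcup \Psi)$.

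The only step that required genuine effort was proving that restriction preserves the saturated filtration, which is where the Mackey double coset formula is used in \Cref{saturatedfiltration}(i); having that in hand, the present theorem is essentially a formal descent, and so the entire argument is a short verification rather than a computation.
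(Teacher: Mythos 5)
Your proof is correct and takes essentially the same approach as the paper, which simply says ``this follows from the above'' after establishing in \Cref{saturatedfiltration} and the surrounding remarks that restriction and induction both preserve the saturated filtration on $K_G(X)$. You have usefully spelled out what the paper leaves implicit: that because the Mackey axioms already hold for $R(-)$ and all four maps in the pullback square preserve $F^\bullet$, the identities descend to the associated graded, and the additivity axiom is inherited in the same way.
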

\begin{proof}
This follows from the above.
\end{proof}

Note that $\rr^*$ is actually a Green functor, that is, a Mackey functor with an $R$-algebra structure compatible with restriction and satisfying the projection formula. We still need to ensure we do not lose too much information by modifying the filtration: after all, we could end up with trivial graded rings. It is not the case however, and in fact both filtrations induce the same topology on $R(-)$. We rely on the following result by Atiyah:

\begin{Lemma}[{\cite[Cor. 12.3]{atiyah-characters_cohomology}}]\label{gammatopology_coincides_idaictopology}
The topology induced by the Grothendieck filtration coincides with the $I$-adic topology.
\end{Lemma}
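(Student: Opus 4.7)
The plan is to establish two containments: $I^n \subseteq \Gamma^n$ for all $n$, and $\Gamma^n \subseteq I^{\lceil n/D \rceil}$, where $D$ denotes the maximum dimension of an irreducible representation of $G$. Together these imply that the $I$-adic and Grothendieck filtrations induce the same topology on $R(G)$. The first inclusion is immediate: since $\Gamma^1 = I$ and $\Gamma^a \cdot \Gamma^b \subseteq \Gamma^{a+b}$, we have $I^n = (\Gamma^1)^n \subseteq \Gamma^n$.

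For the second, the key input is a truncation property for $C_n$: for any honest representation $\rho$ of dimension $d$ and any $n > d$,
\[
C_n(\rho) = \gamma^n(\rho - d) = \lambda^n\bigl(\rho + (n - d - 1)\bigr) = 0,
\]
since $\rho + (n-d-1)$ is a representation of dimension $n-1$ and $\lambda^n$ of such an element vanishes. Let $\rho_1, \ldots, \rho_r$ be the irreducible representations of $G$, with $d_j = \dim \rho_j$ and $D = \max_j d_j$. Any $x \in I$ can be written as $x = \sum_j m_j (\rho_j - d_j)$ with $m_j \in \ZZ$, and the multiplicativity of $\gamma_t$ gives
\[
\gamma_t(x) = \prod_j \bigl(1 + C_1(\rho_j) t + \cdots + C_{d_j}(\rho_j) t^{d_j}\bigr)^{m_j}.
\]
Extracting the coefficient of $t^n$ expresses $\gamma^n(x)$ as a $\ZZ$-linear combination of monomials in the $C_k(\rho_j)$ of total weight exactly $n$ (with $C_k$ assigned weight $k$), each factor of weight at most $D$. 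Since every $C_k(\rho_j)$ lies in $I$, any such monomial involves at least $\lceil n/D \rceil$ factors and hence lies in $I^{\lceil n/D \rceil}$. Applying the same weight-counting to a general generator $\gamma^{n_1}(x_1) \cdots \gamma^{n_k}(x_k)$ of $\Gamma^n$ yields $\Gamma^n \subseteq I^{\lceil n/D \rceil}$.

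The main technical hurdle is the case $m_j < 0$, in which $\gamma_t(\rho_j - d_j)^{m_j}$ is only a formal power series in $t$. The coefficients of the inverse $(1 + C_1(\rho_j) t + \cdots + C_{d_j}(\rho_j) t^{d_j})^{-1}$ can be computed inductively via $a_n = -\sum_{k \geq 1} C_k(\rho_j) \, a_{n-k}$, and a short induction on $n$ confirms that each $a_n$ remains a $\ZZ$-linear combination of weight-$n$ monomials in $C_1(\rho_j), \ldots, C_{d_j}(\rho_j)$, so the weight-counting argument goes through unchanged.
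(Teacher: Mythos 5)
The paper does not prove this statement; it simply cites Atiyah's Corollary 12.3. Your argument, however, is a correct and self-contained elementary proof of the cited fact, and it follows the standard approach. The two inclusions are exactly right: $I^n\subseteq\Gamma^n$ is immediate from $\Gamma^1=I$ and multiplicativity of the filtration, and the reverse inclusion $\Gamma^n\subseteq I^{\lceil n/D\rceil}$ rests on the truncation $C_n(\rho)=0$ for $n>\dim\rho$, which you justify correctly via $\gamma^n(\rho-d)=\lambda^n(\rho+(n-d-1))$. Writing $x\in I$ as $\sum_j m_j(\rho_j-d_j)$, using that $\gamma_t$ is a group homomorphism into $1+tR(G)[[t]]$ (so that negative $m_j$ pose no obstacle), and then weight-counting the monomials in $C_k(\rho_j)$ is exactly the right bookkeeping. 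Your treatment of the case $m_j<0$ via the recursion for the coefficients of the inverse power series is also correct, and the extension to general $\Gamma^n$-generators $\gamma^{n_1}(x_1)\cdots\gamma^{n_k}(x_k)$ goes through because each factor is a $\ZZ$-combination of weight-$n_i$ monomials in the $C_k(\rho_j)$, so the product is a $\ZZ$-combination of monomials of weight $\ge n$, each with $\ge\lceil n/D\rceil$ factors in $I$. In short: correct, and a good explicit rendering of the argument the paper leaves to the reference.
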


\begin{Theorem} \label{gammatopology_coincides_ftopology}
	The filtrations $(F^n)_n$ and $(\Gamma^n)_n$ induce the same topology on $R(G)$.
\end{Theorem}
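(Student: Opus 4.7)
The plan is to prove that each filtration eventually sits inside a power of the other, so that they induce the same topology on $R(G)$. One direction is immediate: taking $H = G$ in the defining sum yields $\Gamma^n(G) \subseteq F^n(G)$ for every $n$, so every $F$-adic neighborhood of $0$ contains a $\Gamma$-adic one. The content of the theorem lies in the converse, namely that for each $n$ there exists $m$ with $F^m(G) \subseteq \Gamma^n(G)$.

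To handle this converse, I would first invoke Atiyah's Lemma~\ref{gammatopology_coincides_idaictopology} to replace the $\Gamma$-topology by the $I$-adic topology on each character ring $R(K)$, reducing the question to showing that for every $n$ some $F^m(G)$ lies in $I(G)^n$. Unwinding $F^m(G) = \sum_{H \leq G} \ind_H^G \Gamma^m(H)$ and applying Atiyah's Lemma inside each $R(H)$, one may replace $\Gamma^m(H)$ by $I(H)^{m'}$ for $m$ sufficiently large, so that the problem reduces to showing that induction $\ind_H^G : R(H) \to R(G)$ is continuous with respect to the augmentation-ideal topologies on source and target: for every $n$, there exists $k$ such that $\ind_H^G\bigl(I(H)^k\bigr) \subseteq I(G)^n$.

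The key ingredients for this continuity step are the projection formula $\ind(a \cdot \res(b)) = \ind(a) \cdot b$, which immediately yields $\ind_H^G\bigl(I(G)^k R(H)\bigr) \subseteq I(G)^k$ for all $k$, together with the fact that $R(H)$ is finitely generated over the Noetherian ring $R(G)$ via restriction. To finish, I would compare the $I(G)$-adic and $I(H)$-adic topologies on $R(H)$: tensoring with $\CC$ identifies $R(K) \otimes \CC$ with class functions on $K$ and shows that $R(H)/I(G) R(H)$ has $\ZZ$-rank $1$, so $I(H)/I(G) R(H)$ is a finite torsion group. An Artin-Rees argument on the $R(G)$-submodule $I(H) \subseteq R(H)$ should then upgrade this to $I(H)^k \subseteq I(G)^{\ell(k)} R(H)$ with $\ell(k) \to \infty$, which combined with the projection-formula inclusion above closes the loop.

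The main obstacle I expect is precisely this last topology-comparison step: the finiteness of $I(H)/I(G) R(H)$ only gives $N \cdot I(H) \subseteq I(G) R(H)$ for some integer $N$, and transferring this to a bona fide containment of filtration pieces requires controlling the $N^k$-torsion in $R(H)/I(G)^k R(H)$ as $k$ grows. If a clean self-contained Artin-Rees argument proves too fiddly, a direct appeal to the Atiyah-Segal completion theorem (identifying $\widehat{R(G)}_{I(G)}$ with $K^0(BG)$ and making continuity of the transfer $\ind_H^G$ automatic) would also serve to close the loop.
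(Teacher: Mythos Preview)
Your approach is essentially the same as the paper's: both directions are handled identically, with the easy containment $\Gamma^n \subseteq F^n$ and the converse reduced, via Atiyah's Lemma~\ref{gammatopology_coincides_idaictopology} and the projection formula, to the comparison of the $I(H)$-adic and the induced $I(G)$-adic topologies on $R(H)$.

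The only difference is that the ``obstacle'' you flag at the end---showing that the $I(H)$-adic and $I(G)$-adic topologies on $R(H)$ agree---is not reproved in the paper but simply quoted as \cite[Th.~6.1]{atiyah-characters_cohomology}. Your sketch of how one would establish it (finite torsion of $I(H)/I(G)R(H)$ together with an Artin--Rees argument) is in fact close to Atiyah's own proof, so there is no gap, just a missing citation. Once you invoke that theorem the argument closes exactly as the paper does: pick $m$ with $\Gamma^m(H) \subset I(G)^k R(H)$ and $k$ large enough that $I(G)^k \subset \Gamma^N(G)$, then apply the projection formula and take the maximum of such $m$ over the finitely many subgroups $H \leq G$.
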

\begin{proof}
 	Let $U\subseteq R(G)$ be open for the $F$-topology, that is, for any $x\in U$ there is an integer $N$ such that $x+F^N \subseteq U$. Since $\Gamma^N \subseteq F^N$, we also have $x+\Gamma^N \subseteq U$, so $U$ is also open for the $\Gamma$-topology.\\
	To prove that a set $U$ open in the $\Gamma$-topology is also open in the $F$-topology, we need to show that for each $N$, there is an $M$ such that $F^M \subseteq \Gamma^N$. Let $H \leq G$, and recall that $R(H)$ can be viewed as an $R(G)$-module via the restriction homomorphism. Then, by \cite[Th. 6.1]{atiyah-characters_cohomology}, the $I(H)$-adic topology is equal to the topology on $R(H)$ induced by the $I(G)$-adic topology. By \cref{gammatopology_coincides_idaictopology}, these topologies are also equal to the $\Gamma$-topology on $H$. In particular, for our fixed $N$, there are some $k$,$m$ satisfying:
	\[
		\Gamma^N(H) \supset I(G)^k\cdot R(H) \supset \Gamma^m(H).
	\]
	Pick $k$ (and thus $m$) large enough that we also have $I(G)^k \subset \Gamma^N(G)$. Then
	\[
		\ind_H^G(\Gamma^m(H)) \subset \ind_H^G(I(G)^k\cdot R(H)) \subset I(G)^k \subset \Gamma^N(G).
	\]
	Now let
	\[
		M = \max_{H\leq G} \ \left\lbrace \min \  \{m \ \vert \ \ind_H^G(\Gamma^m(H)) \subset \Gamma^N(G)\}\right\rbrace,
	\]
	then $F^M(G) = \sum_{H\leq G}\ind_H^G(\Gamma^M(H)) \subset \Gamma^N(G)$, which completes the proof.
\end{proof}

Since $\Gamma^n \subseteq F^n$ for all $n \geq 0$, there is a natural map of graded rings:
\[
	\eta: R^*(G) \longrightarrow \rr^*(G)
\]
induced by the identity. Here is a neat consequence of \cref{gammatopology_coincides_ftopology}:

\begin{Corollary}\label{saturatedgroupsequalfiltrations}
 If the natural map $\eta: R^*(G) \rightarrow \rr^*(G)$ is surjective, then it is an isomorphism and the filtrations $(F^n)$ and $(\Gamma^n)$ are equal.
\end{Corollary}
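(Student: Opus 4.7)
The plan is to exploit the fact that the two filtrations induce the same topology (\Cref{gammatopology_coincides_ftopology}) to bootstrap one-step surjectivity of $\eta$ into the equality $F^n = \Gamma^n$ for every $n$.

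First, I would unpack what surjectivity of $\eta$ means degree by degree. Since $\eta$ is induced by the identity on $R(G)$ and the containment $\Gamma^n \subseteq F^n$ is known, the $n$-th graded component of $\eta$ is the natural map $\Gamma^n/\Gamma^{n+1} \to F^n/F^{n+1}$, and surjectivity of $\eta$ is equivalent to
\[
  F^n = \Gamma^n + F^{n+1} \quad \text{for every } n \geq 0.
\]

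Next, I iterate this one-step identity. Substituting the relation for $F^{n+1}$ into itself and using $\Gamma^{n+1} \subseteq \Gamma^n$, I obtain inductively
\[
  F^n = \Gamma^n + F^{n+k} \quad \text{for every } k \geq 1.
\]
Now I invoke \Cref{gammatopology_coincides_ftopology}: since the $F$-topology and the $\Gamma$-topology agree, for the fixed integer $n$ there exists some $M = M(n)$ with $F^M \subseteq \Gamma^n$. Choosing $k$ so that $n+k \geq M$ gives $F^{n+k} \subseteq \Gamma^n$, hence $F^n \subseteq \Gamma^n + \Gamma^n = \Gamma^n$. Combined with the reverse inclusion this shows $F^n = \Gamma^n$ for every $n$, so $\eta$ is the identity on each graded piece and is in particular an isomorphism.

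There is essentially no obstacle here beyond making sure the topological input is applied at each degree; the main conceptual point to flag in writing the proof is simply the reformulation of surjectivity of $\eta$ as the step-one equality $F^n = \Gamma^n + F^{n+1}$, after which everything follows from iteration plus \Cref{gammatopology_coincides_ftopology}.
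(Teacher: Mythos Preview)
Your proof is correct and follows essentially the same route as the paper: both arguments iterate the one-step relation $F^n = \Gamma^n + F^{n+1}$ to obtain $F^n \subseteq \Gamma^n + F^m$ for all $m$, and then invoke \Cref{gammatopology_coincides_ftopology} to conclude. The only cosmetic difference is that the paper phrases the iteration in terms of explicit polynomials in Chern classes and finishes by noting that $\Gamma^n$ is closed, whereas you work directly with the filtrations and pick an explicit $M$ with $F^M \subseteq \Gamma^n$; these are equivalent.
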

\begin{proof}
	If $\eta$ is surjective, then $\rr^*(G)$ is generated by Chern classes of elements of $R(G)$. Let $P_{w}$ denote a polynomial in the $C_l(\rho_k)$ of weight $w$, then any $x\in F^n(G)$ can be written as:
	\[
		x 	= P_{n}(C_{i_1}(\rho_{1}), \cdots, C_{i_k}(\rho_k)) + y_{n+1}
	\]
	where the $\rho_j$'s are irreducible representations of $G$ and $y_n \in F^{n+1}$. Then we also have:
	\begin{align*}
		x 	&= P_{n}(C_{i_1}(\rho_{1}), \cdots, C_{i_k}(\rho_k)) + P_{n+1}(C_{i_1}(\rho_{1}), \cdots, C_{i_k}(\rho_k)) + y_{n+2} \\
			&= \sum_{l = 1}^m P_{n+l}(C_{i_1}(\rho_{1}), \cdots, C_{i_k}(\rho_k)) + y_{m+1},
	\end{align*}
	for any positive $m$. So $x$ is in $\Gamma^n(G) + F^m(G)$ for all $m$, that is, $x$ is in the topological closure $\overline{\Gamma^n(G)}$ of $\Gamma^n(G)$. But $\Gamma^n(G)$ is closed in $R(G)$, thus $x \in \Gamma^n(G)$.
\end{proof}

We say that $R^*(G)$ is \textit{saturated} if the natural map $\eta$ is an isomorphism. A group $G$ is \textit{saturated} (over $\KK$) if $R^*_\KK(G)$ is saturated. For $H \leq G$, if the induction $i_*:R(H) \rightarrow R(G)$ is compatible with the filtration $(\Gamma^n)$, then $H$ is said to be \textit{$\Gamma$-compatible} with $G$.

\begin{Lemma} \label{restrictionsurjective}
	If the restriction maps $i^*: R(G) \rightarrow R(H)$ are surjective for all $H\leq G$, then $G$ is saturated.
\end{Lemma}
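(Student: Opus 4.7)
The plan is to upgrade the hypothesis from surjectivity of $\res^G_H: R(G) \to R(H)$ to surjectivity of the induced map $\res^G_H: \Gamma^n(G) \to \Gamma^n(H)$, and then use the projection formula to push every generator of $F^n(G)$ into $\Gamma^n(G)$. By \Cref{saturatedgroupsequalfiltrations} it would even suffice to show that $\eta$ is surjective, but in fact I expect to show the stronger statement $F^n(G) = \Gamma^n(G)$ directly, which trivially implies $G$ is saturated.

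First I would verify that the hypothesis lifts to the $\Gamma$-filtration. Given $y \in \Gamma^n(H)$, write it as a sum of monomials $\gamma^{n_1}(x_1)\cdots\gamma^{n_k}(x_k)$ with each $x_i \in I(H)$ and $\sum n_i \geq n$. Since restriction is surjective, each $x_i$ lifts to some $\tilde x_i \in R(G)$; and since $\epsilon_G = \epsilon_H \circ \res^G_H$, the lift $\tilde x_i$ automatically lies in $I(G)$. Because $\gamma^m$ commutes with restriction (it is defined by the $\lambda$-ring structure, which restriction respects), the element $\tilde y := \sum \gamma^{n_1}(\tilde x_1)\cdots\gamma^{n_k}(\tilde x_k)$ lies in $\Gamma^n(G)$ and satisfies $\res^G_H(\tilde y) = y$.

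The second step is to identify $F^n(G)$ with $\Gamma^n(G)$. A typical generator of $F^n(G)$ has the form $\ind_H^G(y)$ for some $H \leq G$ and $y \in \Gamma^n(H)$. Using the lift $\tilde y \in \Gamma^n(G)$ from the previous step together with the projection formula, one computes
\[
  \ind_H^G(y) \;=\; \ind_H^G\bigl(\res^G_H(\tilde y)\cdot 1_H\bigr) \;=\; \tilde y \cdot \ind_H^G(1_H).
\]
The class $\ind_H^G(1_H)$ is the character of the permutation representation on $G/H$, which lies in $R(G) = \Gamma^0(G)$, so the product is in $\Gamma^n(G)\cdot\Gamma^0(G)\subseteq \Gamma^n(G)$. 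This shows $F^n(G)\subseteq \Gamma^n(G)$; the reverse inclusion follows from taking $H = G$ in the definition of $F^n(G)$. Hence $F^n(G) = \Gamma^n(G)$ for every $n$, so $\eta$ is the identity and $G$ is saturated.

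The only genuine subtlety, which I would flag as the main obstacle, is the lifting step: one must check that surjectivity of restriction on $R$ really does give surjectivity at the level of each $\Gamma^n$. This is not entirely formal, since $\Gamma^n$ is defined by a somewhat combinatorial closure under $\gamma$-operations and products, but it follows because $\Gamma^n$ is generated in a uniform way from $I$ and $\res^G_H$ intertwines both the augmentation and the $\gamma^m$'s. Once this is in hand, the projection formula does the rest of the work and the conclusion is immediate.
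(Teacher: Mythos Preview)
Your proposal is correct and follows essentially the same route as the paper: first upgrade surjectivity of restriction from $R$ to each $\Gamma^n$ by lifting the $\gamma$-monomial generators, then apply the projection formula $\ind_H^G(\res^G_H(\tilde y)) = \ind_H^G(1)\cdot\tilde y$ to conclude $F^n(G)\subseteq\Gamma^n(G)$. Your write-up is in fact slightly more careful than the paper's, since you explicitly verify via $\epsilon_G = \epsilon_H\circ\res^G_H$ that the lifts land in $I(G)$.
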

\begin{proof}
	First note that if $i^*$ is surjective then each $i^*_{\Gamma^M}:\Gamma^M(G) \to \Gamma^M(H)$ is surjective: a monomial in the $\gamma^l(\rho_k)$, with $\rho_k\in R(H)$ is just the image by $i^*$ of $\gamma^l(\sigma_k)$, with $\sigma_i\in R(G)$ satisfying $i^*(\sigma_k) = \rho_k$. So let $\rho \in \Gamma^M(H)$ and pick some $\sigma \in \Gamma^M(G)$ such that $\rho = i^*(\sigma)$. Then:
		\[
		i_*(\rho) = i_*i^*(\sigma) = i_*(1)\sigma \in \Gamma^M(G).
		\]
	So all virtual characters in $F^n(G)$ (which are induced from subgroups of $G$) are also in $\Gamma^n(G)$, and thus $\rr^*(G) = R^*(G)$.
\end{proof}
\begin{Remark}
		\begin{enumerate}
			\item In \Cref{saturatedrings}, we use \Cref{restrictionsurjective} to show that Abelian groups are saturated. So $R^*(-)$ is a Mackey functor when restricted to abelian groups.
			\item We show in \Cref{Dp_is_saturated} that the converse of \Cref{restrictionsurjective} is not true: the dihedral group of order $D_p$ for $p$ odd is saturated, but restriction of representations to $C_p$ isn't surjective.
		\end{enumerate}
\end{Remark}

The following result implies that the saturated graded ring of $G$ is completely determined by that of its Sylow subgroups. It is a consequence of \cite[Cor. 3.7 and Prop. 7.2]{webb}; for a more concrete proof, see for example \cite[Th. 6.6]{adem-milgram}.

\begin{Theorem} \label{theorem_swan}
	Let $G \geq H \geq Syl_p(G)$ where $Syl_p(G)$ is a $p$-Sylow of $G$ and let $\rr^*(G)_{(p)}$ denote the $p$-primary component of $\rr^*(G)$. Then:
	\[
		\res_H^G : \rr^*(G)_{(p)} \longrightarrow \rr^*(H)_{(p)}
	\]
	is injective, and its image consists of the stable elements in $\rr^*(H)_{(p)}$
\end{Theorem}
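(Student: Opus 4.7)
The plan is to recognize the statement as a direct application of the abstract stable element theorem for cohomological Mackey functors, so essentially no new work is required once one has Theorem \ref{rr*mackey} in hand. The proof is just a matter of checking that the functor $\rr^*(-)$ satisfies the hypotheses of Webb's machinery, and then quoting it.

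First, I would verify that $\rr^*(-)$ is not only a Mackey functor but a \emph{cohomological} one. The ungraded character ring $R(-)$ is cohomological: this is the observation made after the second definition of a Mackey functor in \Cref{stable_elements}, and is an immediate consequence of the Mackey double coset formula (which is already invoked in the proof of \Cref{saturatedfiltration}(i)) combined with Frobenius reciprocity. Since restriction, induction and conjugation on $\rr^*(-)$ are defined by passing the corresponding maps on $R(-)$ to the associated graded of the saturated filtration, it is enough to observe that all three operations preserve $F^\bullet$. For restriction and induction this is \Cref{saturatedfiltration}(i); for conjugation it is immediate from the intrinsic definition $F^n(H)=\sum_{K\leq H}\ind_K^H(\Gamma^n(K))$, since conjugation by $g$ bijects subgroups of $H$ with subgroups of ${}^gH$ and preserves the Grothendieck filtration by functoriality. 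Thus the cohomological identity descends grade-by-grade to $\rr^*(-)$.

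Second, I would simply invoke the general stable element theorem. For any cohomological Mackey functor $S$ and any subgroup $H \leq G$ containing a Sylow $p$-subgroup, [Webb, Prop.~7.2] shows that after localising at $p$, the restriction
\[
    \res_H^G:\ S(G)_{(p)}\longrightarrow S(H)_{(p)}
\]
is split injective, and [Webb, Cor.~3.7] identifies its image with the submodule of $G$-stable elements. Specialising to $S = \rr^*$ gives exactly the statement.

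There is no genuine obstacle; the only place where a little bookkeeping is needed is the verification in the first step that conjugation is well-defined on the associated graded (i.e.\ that $c_g F^n(H) \subseteq F^n({}^gH)$), and that Webb's version of the cohomological axiom matches the one inherited from $R(-)$. Both points are routine. For readers who prefer a direct argument avoiding the general Mackey functor formalism, the same proof can be carried out verbatim by transcribing the cohomology-theoretic argument of [adem-milgram, Th.~6.6], replacing group cohomology with $\rr^*(-)$ throughout — the only ingredients used there are the cohomological axiom and the standard double coset formula, both of which we have just established for $\rr^*$.
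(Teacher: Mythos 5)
Your proposal is correct and takes essentially the same route as the paper: the paper likewise reduces the statement to Webb's Cor.~3.7 and Prop.~7.2 (offering Adem--Milgram Th.~6.6 as a more concrete alternative), relying on the fact that $\rr^*(-)$ is a (cohomological) Mackey functor established earlier via \Cref{rr*mackey}. You are slightly more careful than the paper in spelling out why the cohomological axiom and conjugation descend to the associated graded of the saturated filtration, but the substance and the citations are identical.
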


A similar result to that due to Swan in cohomology (see \cite{swan}) can be obtained as a straightforward application of \Cref{theorem_swan}.

\begin{Corollary}[Swan's Lemma]
	If $H \unlhd G$ is a normal subgroup such that $H \supseteq Syl_p(G)$, then
	\[
		\rr^*(G)_{(p)} \cong \Ima(\res_H^G) = \rr^*(H)^{G/H}_{(p)}
	\]
\end{Corollary}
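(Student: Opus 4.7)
The plan is to deduce this corollary directly from \Cref{theorem_swan} by specialising the stable element condition to the case when $H$ is normal in $G$. By \Cref{theorem_swan} (applied with $H \supseteq Syl_p(G)$), the restriction
\[
  \res_H^G : \rr^*(G)_{(p)} \longrightarrow \rr^*(H)_{(p)}
\]
is injective and its image is the set of stable elements, so the whole task reduces to identifying the stable elements with $\rr^*(H)^{G/H}_{(p)}$.

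First, I would unpack the stability condition $\res^{{}^gH}_{{}^gH \cap H}(c_g(x)) = \res^H_{{}^gH \cap H}(x)$ for all $g\in G$ under the assumption $H\unlhd G$. Normality gives ${}^gH = gHg^{-1} = H$ for every $g\in G$, so ${}^gH \cap H = H$ and both restrictions in the stability equation are trivially the identity on $\rr^*(H)$. The condition therefore collapses to $c_g(x) = x$ for all $g \in G$, i.e.\ $x$ must be fixed by the conjugation action of $G$ on $\rr^*(H)$.

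Next I would argue that this conjugation action factors through $G/H$. The action of any $h\in H$ on $R(H)$ by conjugation corresponds to replacing a representation $\rho$ by the conjugate representation $\rho^h$, which is isomorphic to $\rho$ via the intertwining operator given by $\rho(h)$; hence $c_h$ acts as the identity on $R(H)$, and consequently on the associated graded $\rr^*(H)$ (as the filtration $F^\bullet$ is defined intrinsically from $R(H)$ and preserved by conjugation). So the subgroup $H$ acts trivially, and the $G$-action descends to a $G/H$-action. The set of stable elements in $\rr^*(H)_{(p)}$ is therefore exactly $\rr^*(H)^{G/H}_{(p)}$.

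Combining the two steps, $\res^G_H$ restricts to an injection $\rr^*(G)_{(p)} \hookrightarrow \rr^*(H)^{G/H}_{(p)}$ whose image is all of $\rr^*(H)^{G/H}_{(p)}$, giving the required isomorphism $\rr^*(G)_{(p)} \cong \Ima(\res_H^G) = \rr^*(H)^{G/H}_{(p)}$. There is no serious obstacle here; the only point that deserves care is the verification that inner conjugation acts trivially on $\rr^*(H)$, which follows from functoriality of the saturated filtration applied to the (identity-homotopic) inner automorphism $c_h$ of $H$.
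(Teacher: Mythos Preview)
Your argument is correct and follows exactly the paper's approach: once $H$ is normal, ${}^gH = H$ so the stability condition collapses to $c_g(x)=x$, and since inner automorphisms act trivially this is precisely invariance under $G/H$. The paper's proof is a one-line remark to the same effect; your version simply spells out the routine verification that $c_h$ acts as the identity on $\rr^*(H)$.
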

\begin{proof}
	If $H$ is normal, the stability condition becomes $c_g(x) = x$, that is, $x$ is invariant by the action of $G/H$.
\end{proof}

\begin{Corollary} \label{swanslemma}
	If $H := Syl_p(G)$ is abelian, then
	\[
		\res_H^G: \rr^*(G)_{(p)} \longrightarrow \rr^*(H)^{N_G(H)}
	\]
is an isomorphism.
\end{Corollary}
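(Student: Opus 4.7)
The plan is to apply \Cref{theorem_swan} to reduce the corollary to a statement about stable elements in $\rr^*(H)_{(p)}$, and then identify these stable elements with the $N_G(H)$-invariants via the strong form of Burnside's fusion theorem for abelian Sylow subgroups.

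First, I would invoke \Cref{theorem_swan} directly with $H = Syl_p(G)$, obtaining that $\res_H^G : \rr^*(G)_{(p)} \to \rr^*(H)_{(p)}$ is injective with image exactly the subring of stable elements, i.e.\ those $x \in \rr^*(H)_{(p)}$ satisfying
\[
  \res^{{}^gH}_{{}^gH \cap H}(c_g(x)) = \res^{H}_{{}^gH \cap H}(x)
\]
for every $g \in G$. The proof then reduces to showing that these stable elements coincide with $\rr^*(H)^{N_G(H)}$. The inclusion of stable elements into $N_G(H)$-invariants is immediate: for $g \in N_G(H)$ one has ${}^gH = H$, so the stability equation collapses to $c_g(x) = x$.

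For the converse, let $x \in \rr^*(H)^{N_G(H)}$, fix $g \in G$, and set $K := {}^gH \cap H$ and $A := g^{-1}Kg \leq H$. The key input is a strong form of Burnside's fusion theorem: since $H$ is abelian, one can find $n \in N_G(H)$ such that $nan^{-1} = gag^{-1}$ for every $a \in A$. Indeed, the abelianness of $H$ forces both $H$ and $g^{-1}Hg$ to centralize $A$, so both are Sylow $p$-subgroups of $C_G(A)$; applying Sylow's theorem inside $C_G(A)$ produces $c \in C_G(A)$ with $cHc^{-1} = g^{-1}Hg$, and then $n := gc$ lies in $N_G(H)$ and acts on $A$ exactly as $g$ does. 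Consequently the conjugation homomorphisms $c_g$ and $c_n$ agree on the image of $\res^H_A$, and combining $c_n(x) = x$ with the standard Mackey-functor identity $\res^{{}^gH}_K \circ c_g = c_g \circ \res^H_A$ yields the required stability equation.

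The main obstacle is this Burnside fusion step: producing $n \in N_G(H)$ that not only sends $A$ to $gAg^{-1}$ but induces the \emph{same} conjugation as $g$ on $A$. The abelian hypothesis on $H$ enters here in an essential way, since it is precisely what guarantees $g^{-1}Hg \subseteq C_G(A)$ and hence enables the Sylow argument inside $C_G(A)$. Once this group-theoretic lemma is in place, the remaining Mackey-functor bookkeeping is routine and the corollary follows from \Cref{theorem_swan}.
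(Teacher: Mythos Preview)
Your argument is correct and is precisely the standard Burnside-fusion proof that the cited reference (Adem--Milgram, Th.~6.8) contains; the paper itself gives no argument beyond that citation, so you have supplied the details the paper omits. One small point worth making explicit: since $H$ is an abelian $p$-group it is saturated, hence $\rr^*(H)=R^*(H)$ is $|H|$-torsion in positive degrees, so $\rr^*(H)_{(p)}=\rr^*(H)$ and the passage from ``stable elements in $\rr^*(H)_{(p)}$'' to ``$N_G(H)$-invariants in $\rr^*(H)$'' is unproblematic.
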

\begin{proof}
	See \cite[Th 6.8]{adem-milgram}.
\end{proof}

\begin{Corollary}\label{corestrictionissurjective}
	Let $H = Syl_p(G)$ be a $p$-Sylow subgroup. Then the induction map
	\[
		\ind_{H}^{G}: \rr^*(H) \to \rr^*(G)_{(p)}
	\]
	is surjective.
\end{Corollary}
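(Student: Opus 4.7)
The plan is to prove surjectivity via the projection formula, reducing the composition $\ind_H^G \circ \res_H^G$ to multiplication by $[G:H]$ on the graded ring. Since $[G:H]$ is coprime to $p$, it becomes a unit after localizing at $p$, which immediately yields surjectivity.

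First, I would work in the ungraded ring $R(G)$. For any $x \in R(G)$, Frobenius reciprocity (the projection formula, already invoked in the proof of \Cref{saturatedfiltration}) gives
\[
  \ind_H^G(\res_H^G(x)) = \ind_H^G(1_H \cdot \res_H^G(x)) = \ind_H^G(1_H)\cdot x = \pi\cdot x,
\]
where $\pi := \ind_H^G(1_H)$ is the permutation character of $G/H$. Since both $\res_H^G$ and $\ind_H^G$ preserve the saturated filtration (\Cref{saturatedfiltration}), this identity descends to the associated graded $\rr^*$.

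The main step is to identify the action of $\pi$ on $\rr^*(G)$. Since $\epsilon(\pi) = [G:H]$, we have $\pi - [G:H]\cdot 1 \in \ker\epsilon = I = F^1(G)$. Hence for any $\tilde{x} \in F^n(G)$,
\[
  \pi \cdot \tilde{x} = [G:H]\cdot \tilde{x} + (\pi - [G:H])\cdot \tilde{x},
\]
and the second term lies in $F^1(G)\cdot F^n(G) \subseteq F^{n+1}(G)$. Passing to $F^n(G)/F^{n+1}(G)$, multiplication by $\pi$ thus coincides with multiplication by $[G:H]$ on every graded piece. Combined with the projection formula, this gives
\[
  \ind_H^G \circ \res_H^G \;=\; [G:H]\cdot \mathrm{id}_{\rr^*(G)}.
\]

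Finally, because $H$ is a $p$-Sylow, $[G:H]$ is coprime to $p$, hence invertible in $\ZZ_{(p)}$. Therefore, for any $x \in \rr^*(G)_{(p)}$, the element $z := [G:H]^{-1}\cdot \res_H^G(x) \in \rr^*(H)_{(p)}$ satisfies $\ind_H^G(z) = x$, exhibiting $x$ in the image of the induction map (after the implicit localization). The only delicate point is the verification that $\pi$ acts as $[G:H]$ on the associated graded; everything else is formal manipulation of the Mackey/Green functor structure of $\rr^*$ established earlier in the section.
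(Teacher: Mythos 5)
Your proposal is correct and takes essentially the same approach as the paper: both rest on the identity $\ind_H^G\res_H^G = [G:H]\cdot\mathrm{id}$ on $\rr^*(G)$ followed by inverting $[G:H]$ after localizing at $p$. The paper states this identity in a single line; your argument supplies the underlying justification (projection formula gives multiplication by the permutation class $\pi=\ind_H^G(1)$, and $\pi-[G:H]\in F^1$ so $\pi$ acts as $[G:H]$ on the associated graded), which is exactly the detail the paper leaves implicit.
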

\begin{proof}
 First note that since $\rr^*(H)$ is $p$-torsion, the image of $\ind_{H}^{G}$ is indeed contained in $\rr^*(G)_{(p)}$. Pick an element $x\in \rr^*(G)_{(p)}$, then $\ind_{H}^{G}\res^{G}_{H}(x) = [G:H]x$, and $[G:H]$ is invertible in $\rr^*(G)_{(p)}$.
\end{proof}

Note that since the induction map preserves the $F$-filtration, it is continuous with respect to the topology induced by it (and thus with respect to the $\Gamma$ and $I$-adic topologies). In particular, induction extends to a well-defined map of completed rings
\[
	\widehat{\ind_{H}^{G}}: \widehat{R}(H) \to \widehat{R}(G)
\]
and by \cref{corestrictionissurjective} the characters induced from Sylow subgroups of $G$ form a dense subset of the completed ring $\widehat{R}(G)$. In other words, we have the following variant of Artin's theorem (see \cite[Th. II.9.17]{serre}):
\begin{Theorem} \label{artinstheorem}
	Let $X$ be a family of subgroups of a finite group $G$. Let
	\[
		\widehat{\ind}: \bigoplus_{H \in X} \widehat{R}(H) \to \widehat{R}(G)
	\]
	be the morphism defined on each $\widehat{R}(H)$ by $\widehat{\ind_{H}^{G}}$. If $X$ contains a $p$-Sylow of $G$ for all $p$, then the map $\widehat{\ind}$ is surjective.
\end{Theorem}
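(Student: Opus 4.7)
The plan is to reduce the claim, via the coincidence of topologies established in \Cref{gammatopology_coincides_ftopology}, to a surjectivity statement at the level of associated graded rings, which itself follows from \Cref{corestrictionissurjective}; the passage from the graded statement to the completion is then a standard successive-approximation argument. The crux of the argument sits in the graded step.

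By \Cref{gammatopology_coincides_ftopology}, the $F$- and $\Gamma$-filtrations yield the same topology on each $R(H)$ and on $R(G)$, so the completion $\widehat{R}(-)$ is intrinsic to that common topology, and the associated graded of $\widehat{R}(G)$ with respect to the induced $F$-filtration is again $\rr^*(G)$. Since $\ind_H^G$ preserves the $F$-filtration by \Cref{saturatedfiltration}, its continuous extension $\widehat{\ind_H^G}$ preserves the filtration on completions and realises on associated gradeds the Mackey-functor induction map $\rr^*(H) \to \rr^*(G)$ of \Cref{rr*mackey}.

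I would then check that the total map $\bigoplus_{H \in X}\rr^*(H) \to \rr^*(G)$ is surjective in every degree. Since $|G|$ annihilates $\rr^*(G)$, the latter decomposes canonically as $\bigoplus_p \rr^*(G)_{(p)}$; by \Cref{corestrictionissurjective}, each $p$-primary summand is reached by induction from the corresponding $p$-Sylow $H_p \in X$. Thus for every $n \geq 0$, the degree-$n$ map $\bigoplus_{H \in X} F^n(H)/F^{n+1}(H) \to F^n(G)/F^{n+1}(G)$ is onto. This is the step in which the hypothesis on $X$ is used, and it is essentially the whole content of the theorem.

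Given $y \in \widehat{R}(G)$, I would now construct a preimage by iterative lifting. Choose $x_0 \in \bigoplus_{H \in X} R(H)$ whose reduction mod $F^1$ matches that of $y$; having produced $x_0, \ldots, x_{n-1}$ such that $y - \widehat{\ind}(x_0 + \cdots + x_{n-1})$ lies in the closure of $F^n(G)$, the degree-$n$ surjectivity established above lets me pick $x_n \in \bigoplus_{H \in X} F^n(H)$ killing this residue modulo $F^{n+1}$. The series $\sum_n x_n$ is then Cauchy in the complete group $\bigoplus_{H \in X}\widehat{R}(H)$ (a finite direct sum of complete groups, with the product topology induced by the $F$-filtrations) and converges to some $x$ with $\widehat{\ind}(x) = y$. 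The main obstacle is really the degree-wise surjectivity; the lifting step is a routine application of completeness once the continuity of $\widehat{\ind_H^G}$ and the compatibility of associated gradeds are in place.
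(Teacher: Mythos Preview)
Your proposal is correct and follows the same strategy as the paper: use \Cref{corestrictionissurjective} to obtain surjectivity on the associated graded $\rr^*$, then lift to the completion by successive approximation; the paper's own proof is a one-line invocation of this same density argument. One small caveat: the claim that $|G|$ annihilates $\rr^*(G)$ holds only in positive degrees, so your $p$-primary decomposition does not cover degree~$0$; there, surjectivity is just the observation that the indices $[G:H_p]$ for $H_p \in X$ are coprime.
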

\begin{proof}
	By \Cref{corestrictionissurjective}, the characters induced from $H_p$ form a dense subset of $\widehat{R}(G)_{(p)}$ for the $F$-topology, so if $X$ contains a $p$-Sylow of $G$ for every $p$ then $\widehat{\ind}$ is surjective.

\end{proof}


\section{Computing saturated rings}\label{computingsaturatedrings}
We now apply \Cref{saturatedrings} by trying our hand at some computations; a number of the groups mentioned in \cite{chetard} (including all abelian groups) are saturated, as we show below. In general, it is much more difficult to compute saturated rings than usual graded character rings, due to the complexity of the saturated filtration. This is where \Cref{swanslemma} comes into play, as we show with the example of the projective special linear group $ PSL(2,q)$.
For convenience, when the groups $	H\leq G$ are clear from the context, we denote the induction $\ind_H^G: R(H) \to R(G)$ by $	i_*$ and the restriction $	\res^G_H: R(G) \to R(H)$ by $	i^*$. All examples are computed over $\CC$.

\subsection{Saturated groups}
Abelian groups, dihedral groups of order $2p$ and the quaternion group of order $	8$ are all saturated.
\begin{Proposition}\label{abeliangroupsaresaturated}
	Abelian groups are saturated.
\end{Proposition}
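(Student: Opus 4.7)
The plan is to invoke \Cref{restrictionsurjective}: it suffices to show that for every subgroup $H \leq G$ of an abelian $G$, the restriction map $i^*: R_\CC(G) \to R_\CC(H)$ is surjective. Once this is established, \Cref{restrictionsurjective} immediately yields $\eta: R^*_\CC(G) \xrightarrow{\sim} \rr^*_\CC(G)$.

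First I would reduce the problem to a statement about linear characters. Since $G$ is abelian, so is every subgroup $H$, and every irreducible $\CC$-representation of $H$ is one-dimensional, i.e.\ a group homomorphism $\chi: H \to \CC\x$. These characters generate $R_\CC(H)$ as an abelian group, so it is enough to show that every such $\chi$ is the restriction of some character $\tilde\chi: G \to \CC\x$.

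The key observation is that $\CC\x$ is a divisible abelian group, and therefore an injective object in the category of abelian groups. Given the inclusion $H \hookrightarrow G$ and the homomorphism $\chi: H \to \CC\x$, injectivity produces an extension $\tilde\chi : G \to \CC\x$ with $\tilde\chi|_H = \chi$. (A purely finite argument works equally well: choose a composition series $H = H_0 \leq H_1 \leq \cdots \leq H_r = G$ with each quotient $H_{i+1}/H_i$ cyclic, and extend one cyclic step at a time by choosing a root of unity of the appropriate order, using that $\CC\x$ contains all roots of unity.) Either way, $i^*$ hits every irreducible character of $H$.

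The main obstacle, such as it is, is really just this extension fact for characters; everything else is formal, since \Cref{restrictionsurjective} does all the heavy lifting once surjectivity of $i^*$ is in hand. I would finish by remarking that this also proves the last sentence of the corresponding theorem in the introduction: the restriction of $R^*_\CC(-)$ to the full subcategory of abelian groups in $\Gset$ coincides there with $\rr^*_\CC(-)$, which is a Mackey functor by \Cref{rr*mackey}.
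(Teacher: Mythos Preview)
Your proposal is correct and follows essentially the same approach as the paper: both reduce the claim to surjectivity of $i^*: R_\CC(G)\to R_\CC(H)$ and then invoke \Cref{restrictionsurjective}. The only cosmetic difference is in justifying surjectivity: you use that $\CC\x$ is divisible (injective in $\Ab$) to extend characters, while the paper uses Pontryagin duality, arguing that the injection $H\hookrightarrow G$ induces an injection $\widehat{\widehat H}\hookrightarrow\widehat{\widehat G}$, hence $\widehat\phi:\widehat G\to\widehat H$ is surjective.
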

\begin{proof}
	Let $G$ be an abelian group and define $\widehat{G} := \Hom(G,\CC^*)$. Then any abelian group homomorphism $\phi: G \rightarrow H$ induces a map $\widehat{\phi} : \widehat{H} \rightarrow \widehat{G}$, which is injective if and only if $\phi$ is surjective. Additionally, there is a natural isomorphism between $G$ and its double dual $\widehat{\widehat{G}}$ given by associating to $g$ the evaluation at $g$.\\
	Now if $H\leq G$, then the injection $H \rightarrow G$ induces a map $\widehat{\phi}: \widehat{G} \rightarrow \widehat{H}$, and also a map $\widehat{\widehat{\phi}}:\widehat{\widehat{H}} \rightarrow \widehat{\widehat{G}}$. The latter is injective, which means by the above that $\widehat{\phi}$ is surjective. Thus the characters of $H$ all come from restrictions of characters of $G$, and $G$ is saturated.
\end{proof}

We now turn to the quaternion group 	$ Q_8 =  = \langle i,j,k \ | \ i^2 = j^2 = k^2 = ijk \rangle	$. The group $Q_8$ has 5 conjugacy classes: $\lbrace 1\rbrace$, $\lbrace -1\rbrace$, $\lbrace \pm i\rbrace$, $\lbrace \pm j\rbrace$, $\lbrace \pm k\rbrace$ so 5 irreducible representations on $\CC$. They are as follows:
	\begin{enumerate}
		\item In dimension 1, the trivial representation, and the characters $\rho_1: \begin{cases} i \mapsto 1 \\ j\mapsto -1 \end{cases}$, $\rho_2 = -\rho_1$ and $\rho_3 = \rho_1\rho_2$,
		\item and in dimension 2, the representation $\Delta$:
		\begin{equation*}
		 \Delta(i) = \begin{pmatrix} i & 0 \\ 0 & -i \end{pmatrix}, \ \ \ \Delta(j) = \begin{pmatrix} 0 & -1 \\ 1 & 0 \end{pmatrix}, \ \ \ \Delta(k) = \begin{pmatrix} 0 & -i \\ -i & 0 \end{pmatrix} \begin{matrix} \vphantom{1} \\ \vphantom{0}. \end{matrix}
		\end{equation*}
	\end{enumerate}
We recall the following result from \cite{chetard}:
\begin{Lemma}[{\cite[Th 6.4]{chetard}}] Let $\rho_1$ be the character of $	Q_8$ defined by $	\rho_1(i) = 1$, $	\rho_1(j) = -1$, let $\rho_2 = - \rho_1$, and let $	\Delta$ be irreducible character of degree $2$ of $Q_8$ sending $i,j,k$ to $0$. Then
\[
			R^*(Q_8) = \frac{\ZZ[x_1,x_2,y]}{(2x_i, 8y, x_i^2, x_1x_2 - 4y)},
\]
where $	x = c_1(\rho_1)$, $	y = c_1(\rho_2)$ and $	y = c_2(\Delta)$.
\end{Lemma}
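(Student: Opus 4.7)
The plan is to prove the presentation of $R^*(Q_8)$ by first identifying a generating set of Chern classes, then deriving the relations, and finally verifying that the orders of the generators and the ideal of relations are exactly as claimed (using restriction to cyclic subgroups and the continuity method of \cite{chetard}).

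First I would collect the generators. Since $R^*(G)$ is always generated as a ring by Chern classes of irreducible representations, I need only consider $\rho_1,\rho_2,\rho_3 = \rho_1\rho_2,$ and $\Delta$. For the one-dimensional characters, only the first Chern class is nontrivial, and by multiplicativity on line bundles $c_1(\rho_3) = x_1+x_2$, so $\rho_3$ contributes nothing new. For $\Delta$, a direct check shows $\det\Delta = 1$, so $c_1(\Delta) = c_1(\det\Delta) = 0$, and the only new generator is $y = c_2(\Delta)$. Thus $R^*(Q_8)$ is generated by $x_1,x_2,y$.

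Next I would extract the obvious relations. Since $\rho_i$ has order two in the character group, $2x_i = c_1(\rho_i^2) = c_1(1) = 0$; and since $\rho_i$ is a line, $C_1(\rho_i)^2 = (\rho_i-1)^2 = -2C_1(\rho_i)$, which together with $2x_i=0$ yields $x_i^2 = 0$ in $R^*$. For the mixed relation, I would use the character-level identity $\Delta^{\otimes 2} = 1 + \rho_1 + \rho_2 + \rho_3$ and apply the total Chern class on both sides. On the left, the splitting principle with formal roots $\theta,-\theta$ (so $\theta^2 = -y$) gives
\[
c_T(\Delta^{\otimes 2}) = (1+2\theta T)(1-2\theta T) = 1 + 4y T^2.
\]
On the right, expanding $(1+x_1 T)(1+x_2 T)(1+(x_1+x_2)T)$ and reducing modulo $2x_i = x_i^2 = 0$ collapses every term except $x_1x_2 T^2$. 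Comparing coefficients gives $x_1x_2 = 4y$. The relation $8y=0$ is then forced because $|Q_8|=8$ annihilates $R^*(Q_8)$ (a general feature of the Grothendieck filtration, \cite[Prop. 2.6]{chetard}).

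The main obstacle, as usual in these computations, is showing there are no further relations and that the additive orders of $x_i$ and $y$ are exactly $2$ and $8$ (rather than smaller divisors). For $x_i$, restriction to the cyclic subgroup $\langle j\rangle$ or $\langle i\rangle$ detects $x_1, x_2$ as nonzero classes of order $2$ in the graded ring of $C_4$. For the order of $y$ and of its powers, a bare restriction argument is insufficient: the subgroups of $Q_8$ are all cyclic, and $y$ restricts nontrivially but with smaller order. Here I would apply the admissible approximation / continuity machinery of \cite[\S 6]{chetard}: take $\widetilde\Gamma^{2k} = \langle Y^k\rangle$ and $\widetilde\Gamma^{2k+1} = \langle X_i Y^k\rangle$ with $X_i = \rho_i-1$, $Y = 2-\Delta$, evaluate on a chosen $2$-regular element to detect the $2$-adic valuation of $cY^k$, and conclude $8\mid $ the order of $y^k$. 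Combining the upper bound $8y = 0$ with this lower bound of $8$ pins down the order. Finally, a monomial count in the claimed quotient matches the dimensions of $\Gamma^n/\Gamma^{n+1}$ computed from the filtration, confirming that the presentation is complete.
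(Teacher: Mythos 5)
This lemma is cited in the present paper from \cite[Th 6.4]{chetard} without proof, so there is no in-paper argument to compare against; I will assess your proposal on its own terms. Your overall plan (identify Chern class generators, derive relations from the total Chern class of $\Delta^2=1+\rho_1+\rho_2+\rho_3$, bound the orders via restriction and the continuity method) is a reasonable template for this kind of computation, and the choice of generators, the verification $\det\Delta=1$, and the relations $2x_i=0$, $8y=0$ are all sound.

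However, the derivation of $x_i^2=0$ is not correct. You argue that $C_1(\rho_i)^2=(\rho_i-1)^2=-2C_1(\rho_i)$ together with $2x_i=0$ forces $x_i^2=0$. But $2x_i=0$ in degree one only says $2C_1(\rho_i)\in\Gamma^2$, whereas $x_i^2=0$ requires the strictly stronger statement $2C_1(\rho_i)\in\Gamma^3$. This inference fails in general: for $G=C_2$ one has exactly the same identity $(\rho-1)^2=-2(\rho-1)$ and $2t=0$, yet $R^*(C_2)=\ZZ[t]/(2t)$ and $t^2\neq 0$. So the fact that $2C_1(\rho_i)\in\Gamma^3(Q_8)$ is a genuine feature of $Q_8$ that must be proved, not a formal consequence of the $\lambda$-ring identity. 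This gap then propagates: your derivation of $x_1x_2=4y$ is carried out \emph{modulo} $x_i^2=0$, making that step circular. Without assuming $x_i^2=0$, the $T^2$-coefficient comparison only gives $x_1^2+x_2^2+3x_1x_2=4y$ (and the $T^3$-coefficient gives $x_1^2x_2+x_1x_2^2=0$), and one still needs an independent argument — most plausibly an explicit analysis of $\Gamma^3(Q_8)$ or a variant of the admissible-approximation method applied in degree $2$ — to pin down $x_i^2=0$ and hence $x_1x_2=4y$.
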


We also need a result from \cite{guillot-minac}:
\begin{Lemma}[{\cite[Prop 3.4]{guillot-minac}}]
	Let $C_N$ be the cyclic group of order $N$ and $\rho$ a generating representation for $R(C_N)$. Then
	\[
		R^*(C_N) = \frac{\ZZ[t]}{(Nt)},
	\]
	where $t = c_1(\rho)$.
\end{Lemma}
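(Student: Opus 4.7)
The plan is to compute $R^*(C_N)$ directly from its presentation. First, I would identify $R(C_N) \cong \ZZ[\rho]/(\rho^N - 1)$ and change variables by setting $u = \rho - 1$, giving $R(C_N) \cong \ZZ[u]/((1+u)^N - 1)$, with augmentation ideal $I = (u)$. Then $t = c_1(\rho) = \gamma^1(\rho - 1)$ is precisely the class of $u$ in $I/I^2$, so it suffices to identify $R^*(C_N)$ as the associated graded ring of a filtration that can be pinned down explicitly.

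Next I would argue that $\Gamma^n = I^n$ for every $n$. Since $C_N$ is abelian, every irreducible character is one-dimensional, so $I$ is additively generated by elements $L - 1$ for line characters $L$. Using $\lambda_s(L) = 1 + sL$ together with $\lambda_s(1) = 1 + s$, a direct computation yields
\[
\gamma_T(L - 1) \;=\; \lambda_{T/(1-T)}(L)\,/\,\lambda_{T/(1-T)}(1) \;=\; 1 + T(L - 1),
\]
so $\gamma^m(L - 1) = 0$ for all $m \geq 2$. Expanding $\gamma^n$ of an arbitrary $x \in I$ via the exponential identity $\gamma_T(x + y) = \gamma_T(x)\gamma_T(y)$ then forces every monomial $\gamma^{n_1}(x_1)\cdots\gamma^{n_k}(x_k)$ with $\sum n_i \geq n$ to lie in $I^n$, so $\Gamma^n \subseteq I^n$. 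The reverse inclusion is immediate from $I = \Gamma^1$ and the multiplicativity of the $\Gamma$-filtration.

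It now remains to compute the associated graded of $\ZZ[u]/((1+u)^N - 1)$ for the $(u)$-adic filtration. Expanding $(1+u)^N - 1 = Nu + \binom{N}{2}u^2 + \cdots + u^N$, the leading term yields the relation $Nt = 0$ in $I/I^2$, and hence a surjection $\ZZ[t]/(Nt) \twoheadrightarrow R^*(C_N)$. The real work — and what I expect to be the main obstacle — is verifying that in each $I^n/I^{n+1}$ the \emph{only} relation is $Nt^n = 0$. Concretely, suppose $a u^n \equiv f(u)\,h(u) \pmod{(u^{n+1})}$ with $f = (1+u)^N - 1$ and $h = h_0 + h_1 u + \cdots$. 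Matching coefficients of $u, u^2, \ldots, u^{n-1}$ in turn is a triangular system whose leading coefficient is always $N$: it forces $h_0 = h_1 = \cdots = h_{n-2} = 0$, after which the coefficient of $u^n$ reduces to $a = N h_{n-1} \in N\ZZ$. The subtlety is that the higher binomial coefficients $\binom{N}{k}$ might a priori contribute cross-terms producing extra relations, but the triangular structure shows that they do not. This yields $R^*(C_N) \cong \ZZ[t]/(Nt)$ as claimed.
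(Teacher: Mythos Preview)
The paper does not prove this lemma; it is quoted from \cite[Prop.~3.4]{guillot-minac} and used as a black box. So there is no in-paper proof to compare against.

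Your argument is correct and is essentially the standard one. Two minor comments. First, when you reduce $\gamma^{m}(x)$ for a general $x\in I$ to the case of line elements, you implicitly need $\gamma_T(-(L-1)) = (1+T(L-1))^{-1} = \sum_{k\ge 0}(-1)^k T^k (L-1)^k$, whose $T^k$-coefficient lies in $I^k$; this is what makes the ``exponential'' expansion land in $I^n$ for arbitrary integer combinations, not just nonnegative ones. Second, your triangular-system argument for $I^n/I^{n+1}\cong \ZZ/N$ is clean; it might be worth remarking explicitly that $I^n/I^{n+1}$ is cyclic generated by $u^n$ (since any $u^n g(u)\equiv g(0)\,u^n \pmod{u^{n+1}}$), so that determining the annihilator of that single generator is indeed the whole computation.
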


\begin{Proposition} \label{Q8_is_saturated}
	The quaternion group $Q_8$ is saturated.
\end{Proposition}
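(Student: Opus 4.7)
The plan is to invoke \Cref{saturatedgroupsequalfiltrations}: $Q_8$ is saturated if and only if, for every subgroup $H \leq Q_8$, the induction $\ind_H^{Q_8}$ sends $\Gamma^n(H)$ into $\Gamma^n(Q_8)$ for all $n$. Up to conjugation, the proper nontrivial subgroups of $Q_8$ (all normal) are the center $\{\pm 1\}\cong C_2$ and the three cyclic subgroups $\langle i\rangle,\langle j\rangle,\langle k\rangle$ of order $4$. Since $C_4$ is abelian, \Cref{abeliangroupsaresaturated} shows it is saturated, so $\ind_{C_2}^{C_4}$ already preserves the $\Gamma$-filtration; by the transitivity $\ind_{C_2}^{Q_8} = \ind_{C_4}^{Q_8}\circ\ind_{C_2}^{C_4}$, and by the symmetry among the three copies of $C_4$ coming from outer automorphisms of $Q_8$, it therefore suffices to verify that $\ind_{C_4}^{Q_8}$ preserves the Grothendieck filtration for $C_4 = \langle i\rangle$.

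With the quoted presentation $R^*(C_4) = \ZZ[t]/(4t)$, $t = c_1(\phi_1)$, we have $\Gamma^n(C_4) = I(C_4)^n = (t^n)$, so the claim reduces to $\ind(t^n)\in \Gamma^n(Q_8)$ for all $n\geq 1$. Frobenius reciprocity, together with the restrictions $\res\rho_2 = \phi_1^2$ and $\res\Delta = \phi_1 + \phi_1^{-1}$, yields
\[
  \ind(1) = 1+\rho_1,\qquad \ind(\phi_1) = \ind(\phi_1^{-1}) = \Delta,\qquad \ind(\phi_1^2) = \rho_2+\rho_3,
\]
and expanding $t^n = (\phi_1-1)^n$ (reducing via $\phi_1^4 = 1$) produces an explicit formula for $\ind(t^n)$ in $R(Q_8)$. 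Using the presentation of $R^*(Q_8)$ above, together with the identifications $\rho_i - 1 \leftrightarrow x_i$ (whence $\rho_3 - 1 \leftrightarrow x_1 + x_2$ since $\rho_3 = \rho_1\rho_2$) and $\Delta - 2 \leftrightarrow -y$ modulo $\Gamma^3$, one would check the base cases by direct calculation: $\ind(t) = \Delta - 1 - \rho_1$ lies in $\Gamma^1$; the image of $\ind(t^2)$ in $\Gamma^1/\Gamma^2$ is $2x_1 + 2x_2 = 0$, so $\ind(t^2)\in \Gamma^2$; and likewise $\ind(t^3)\in\Gamma^3$.

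For the inductive step, the reduction $t^4 = -4t - 6t^2 - 4t^3$ in $R(C_4)$, combined with the relations $2x_i = 0$, $x_i^2 = 0$, $x_1 x_2 = 4y$, $8y = 0$ in $R^*(Q_8)$, propagates the conclusion to all $n$. The main obstacle is the $2$-adic bookkeeping of this propagation: one must show that the binomial coefficients arising in $(\phi_1 - 1)^n$ are divisible by just enough powers of $2$ to push each graded component of $\ind(t^n)$ into a sufficiently high filtration level. The alternating structure of $R^*(Q_8)$, whose positive-degree graded pieces have exponent $8$ in even degrees and exponent $2$ in odd degrees, gives the inclusions $8\,\Gamma^{2k}(Q_8) \subseteq \Gamma^{2k+1}(Q_8)$ and $2\,\Gamma^{2k+1}(Q_8) \subseteq \Gamma^{2k+2}(Q_8)$; these turn out to be precisely strong enough to close the induction and complete the verification that $Q_8$ is saturated.
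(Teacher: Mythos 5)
Your reduction to checking $\Gamma$-compatibility of a single $C_4\leq Q_8$ matches the paper (via \Cref{abeliangroupsaresaturated}, transitivity of induction, and the outer automorphisms permuting $\langle i\rangle,\langle j\rangle,\langle k\rangle$), but the verification you propose is genuinely different and, as written, incomplete. The paper's key observation is that $\res^{Q_8}_{C_4}(\Delta)=\rho+\rho^{-1}$, hence $\res(C_2(\Delta))=-C_1(\rho)^2=-t^2$; the projection formula then gives, for $n=2m+l$ with $l\in\{0,1\}$,
\[
\ind(t^n)=\ind\bigl(t^l\cdot\res(-C_2(\Delta))^m\bigr)=\ind(t^l)\cdot(-C_2(\Delta))^m\ \in\ \Gamma^l\cdot\Gamma^{2m}\subseteq\Gamma^n(Q_8),
\]
with no binomial bookkeeping at all. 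Your route instead computes $\ind(t^n)$ from Frobenius reciprocity and the expansion of $(\phi_1-1)^n$ reduced by $(t+1)^4=1$, and defers the essential point (``the binomial coefficients are divisible by just enough powers of $2$'') to an induction you do not carry out.

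That induction does not in fact close on the exponent data you cite. The recursion $t^4=-4t-6t^2-4t^3$ gives $\ind(t^n)=-4\ind(t^{n-3})-6\ind(t^{n-2})-4\ind(t^{n-1})$, so you would need in particular $4\,\ind(t^{n-1})\in\Gamma^n(Q_8)$. But for $n-1$ even (and positive) the graded piece $\Gamma^{n-1}/\Gamma^n\cong\ZZ/8$, so the generic inclusion $4\,\Gamma^{n-1}\subseteq\Gamma^n$ is false; your stated bound gives only $8\,\Gamma^{2k}\subseteq\Gamma^{2k+1}$, and the term $6\,\ind(t^{n-2})$ suffers from the same deficit. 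The conclusion is rescued only by the specific form of the element: $\ind(t^{2m})=(1+\rho_1)(-C_2(\Delta))^m$ has image $\pm 2y^m$ in $\Gamma^{2m}/\Gamma^{2m+1}$, so $4\,\ind(t^{2m})$ does vanish there --- but recognising that structure is precisely what the identity $t^2=\res(-C_2(\Delta))$ and the projection formula make transparent, and it is exactly the step your scheme leaves out.
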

\begin{proof}
	The quaternion group contains one subgroup isomorphic to $C_2$, which is generated by $-1$, and three subgroups isomorphic to $C_4$, which all contain $-1$ and are generated respectively by $i$, $j$ and $k$. Since all these groups are saturated, we only need to check that the maximal saturated subgroup $H = \langle k \rangle \cong C_4$ is $\Gamma$-compatible with $Q_8$, which we do by showing that, if $	\rho$ is the generating representation of $R(C_4)$, then each induced character $\ind_{C_4}^{Q_8}(C_1(\rho)^n)$ is in $\Gamma^n(Q_8)$.
	Note first that $\ind_{C_4}^{Q_8}(C_1(\rho)) \in \Gamma^1(Q_8) = I_{Q_8}$. Moreover, the representation $\Delta$ restricts on $C_4$ to $\rho + \rho^{-1}$, and so
	\[
		\res^{Q_8}_{C_4}(y) = c_2(\rho+\rho^{^-1}) = -c_1(\rho)^2 = -t^2.
	\]
	Therefore $C_1(\rho)^2 = \res(-C_2(\Delta))$, and so
	\[
		i_*(C_1(\rho)^2) = i_*(i^*(-C_2(\Delta))) = - \CC[Q_8/C_4] \otimes C_2(\Delta) \in \Gamma^2(Q_8).
	\]
	Thus, for any $n = 2m+l$ with $l = 0,1$:
	\[
		i_*(C_1(\rho)^n) = i_*(C_1(\rho)^{2m+l}) = i_*\left(C_1(\rho)) \cdot i^*(-C_2(\Delta))^m\right) = i_*(C_1(\rho))\cdot(-C_2(\Delta))^m,
	\]
	which is an element of  $\Gamma^l\cdot\Gamma^{2m}$. This means that $C_4$ is $\Gamma$-compatible with $Q_8$, and therefore $Q_8$ is saturated.
\end{proof}

With a similar method, we can prove that dihedral groups are saturated.
\begin{Lemma}[{\cite[Prop 4.4]{chetard}}]
	Let $p$ be an odd prime, and let $D_p = \langle \sigma, \tau \ | \ \tau^2 = \sigma^p = 1, \tau\sigma\tau = \sigma^{-1} \rangle$ be the dihedral group of order $p$. Let $\chi$ be the irreducible character of $D_p$ of degree 2, sending $\tau$ to $0$ and $\sigma$ to $2cos(\frac{2\pi}{p})$, then
	\[
		R^*(D_p) = \frac{\ZZ\left[x,y\right]}{(2x,py,xy)},
	\]
	where $x = c_1(\chi)$ and $y = c_2(\chi)$.
\end{Lemma}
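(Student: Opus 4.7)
The plan is to identify a small generating set of $R^*(D_p)$, verify the three stated relations directly inside the $\lambda$-ring $R(D_p)$, and then confirm that no further relations hold by restricting to the Sylow subgroups $C_p$ and $C_2$.

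First I would enumerate the irreducible complex characters: the trivial character, the sign character $\epsilon$ (with $\epsilon(\sigma)=1$, $\epsilon(\tau)=-1$), and the $(p-1)/2$ two-dimensional characters $\chi_1,\dots,\chi_{(p-1)/2}$, where $\chi=\chi_1$. An eigenvalue computation gives $\det\chi_k=\epsilon$ for every $k$, so $c_1(\chi_k)=c_1(\epsilon)=x$ and no new degree-one generator appears. For the degree-two generators, I would express each $c_2(\chi_k)$ in terms of $x$ and $y=c_2(\chi)$ using the Adams operation $\psi^k$ and the splitting principle (noting that $\chi_k$ is a $1$-dimensional correction of $\psi^k\chi$); since $\dim\chi_k=2$, higher Chern classes of $\chi_k$ vanish. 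Hence $R^*(D_p)$ is generated by $x$ and $y$.

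Among the three relations, $2x=0$ follows at once from $\epsilon^2=1$ via $0=c_1(\epsilon^2)=2c_1(\epsilon)$. For $xy=0$, the key observation is the elementary identity $\epsilon\cdot\chi=\chi$ (checked on characters: at $\sigma^j\tau$ both sides vanish, and at $\sigma^j$ both sides equal $\chi(\sigma^j)$). Writing $C_1(\epsilon)=\epsilon-1$ and $C_2(\chi)=\lambda^2(\chi-1)=\epsilon-\chi+1$ as representatives of $x$ and $y$, one obtains
\[
(\epsilon-1)(\epsilon-\chi+1)=\epsilon^2-\epsilon\chi+\chi-1=1-\chi+\chi-1=0
\]
in $R(D_p)$, so $xy=0$ in $R^*(D_p)$. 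The main obstacle is $py=0$: the naive bound $|D_p|\cdot y=2py=0$ from \cite[Prop. 2.6]{chetard} is not sharp enough. I would imitate the continuity/admissible-approximation argument used above for $A_4$, setting $\widetilde{\Gamma}^{2k}=\langle Y^k\rangle$ and $\widetilde{\Gamma}^{2k+1}=\langle XY^k\rangle$ with $X=C_1(\epsilon)$ and $Y=C_2(\chi)$, and applying a suitable evaluation map at a power of $\sigma$ (valued in $\ZZ[\zeta_p+\zeta_p^{-1}]$, hence continuous for the $p$-adic topology after reduction modulo the prime above $p$) to rule out any $q$-torsion contribution to the order of $y^k$ for primes $q\neq p$.

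Finally, I would rule out any extra relation by pushing the putative presentation through the two kinds of maximal subgroups. The restriction $\res^{D_p}_{C_p}$ sends $x\mapsto 0$ and $y\mapsto-t^2$ inside $R^*(C_p)=\ZZ[t]/(pt)$, so each cyclic subgroup $\ZZ\cdot y^k$ injects; the restriction to $\langle\tau\rangle\cong C_2$ sends $x\mapsto s$ and $y\mapsto 0$ inside $R^*(C_2)=\ZZ[s]/(2s)$, so each $\ZZ\cdot x^k$ injects. Since $xy=0$ removes all mixed monomials, these two restrictions together detect every nonzero class of the proposed presentation, confirming that $R^*(D_p)=\ZZ[x,y]/(2x,py,xy)$.
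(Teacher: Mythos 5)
The paper does not prove this lemma at all—it cites it from \cite[Prop.\ 4.4]{chetard}—so your proposal has to be judged on its own merits rather than compared against an in-text argument.

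Most of your proposal is correct and efficient. The enumeration of irreducibles, the observation that $\det\chi_k=\epsilon$ kills any new degree-one generator, the derivation of $2x=0$ from $\epsilon^2=1$, the clean factorisation
\[
(\epsilon-1)(\epsilon-\chi+1)=\epsilon^2-\epsilon\chi+\chi-1=0
\]
giving $xy=0$, and the final step where you push the presentation through $C_p$ and $C_2$ to detect every monomial $x^n$ and $y^k$ separately—all of this is sound and is in the right spirit.

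The genuine gap is in the step $py=0$. You propose to ``imitate the continuity/admissible-approximation argument used for $A_4$'' and to ``rule out any $q$-torsion contribution to the order of $y^k$ for $q\neq p$.'' That tool goes in the wrong direction. In the $A_4$ computation in \Cref{A4_gradrepring}, the evaluation argument shows that $6Y^k\notin\Gamma^{2k+1}$, i.e.\ it provides a \emph{lower} bound on the additive order of $y^k$. Here the order of $y^k$ is already pinned between $p$ (restriction to $C_p$) and $2p$ (the group-order bound), and what remains to be shown is the \emph{upper} bound $py^k=0$, i.e.\ $pY^k\in\Gamma^{2k+1}$. An evaluation/valuation argument can never establish that an element actually lies in $\Gamma^{2k+1}$; it can only show it does not. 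Evaluating $Y$ at $\tau$ gives $0$, so evaluation at $\tau$ is blind to $y^k$; evaluating at $\sigma$ and computing $\pp$-adic valuations reproves $p\mid\mathrm{ord}(y^k)$ but cannot rule out the factor $2$. Also, your phrase ``after reduction modulo the prime above $p$'' is not quite what you want: reduction mod $\pp$ kills $Y(\sigma)=-\zeta^{-1}(\zeta-1)^2$ entirely, so it carries no information; you would need the $\pp$-adic valuation, not the residue.

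To close this gap one needs a constructive identity. One option, close in spirit to the tools you already invoke: since $p-1$ is even, $(\sigma^j\tau)^{p-1}=1$, so a direct character check gives $\psi^{p-1}\chi=\chi+1-\epsilon$. Comparing total Chern classes and using $c_n(\psi^k\rho)=k^n c_n(\rho)$ yields, in degree $2$, the relation $(p-1)^2y=y$, i.e.\ $p(p-2)y=0$. Since $p-2$ is odd and $2py=0$, this forces $py=0$. (As a bonus, the degree-$3$ coefficient of $c_T(\chi+1-\epsilon)=(1+xT+yT^2)/(1+xT)$ recovers $xy=0$ for free.) With such an argument in place, the rest of your proposal goes through.
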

\begin{Proposition}\label{Dp_is_saturated}
	Let $p$ be an odd prime, then the dihedral group $D_p$ of order $2p$ is saturated.
\end{Proposition}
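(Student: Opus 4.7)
My plan is to follow the strategy of \Cref{Q8_is_saturated}. Up to conjugacy, the proper subgroups of $D_p$ are only $\langle\tau\rangle \cong C_2$ and $\langle\sigma\rangle \cong C_p$, both of which are abelian and hence saturated by \Cref{abeliangroupsaresaturated}. It therefore suffices to prove that each of these two subgroups is $\Gamma$-compatible with $D_p$.

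The case of $\langle\tau\rangle$ is immediate. The sign character $\varepsilon = \det\chi$ of $D_p$ restricts to the nontrivial character of $\langle\tau\rangle$, so $\res^{D_p}_{\langle\tau\rangle}$ is surjective and the argument of \Cref{restrictionsurjective} applies verbatim. The delicate case is $H = \langle\sigma\rangle \cong C_p$: here restriction is \emph{not} surjective, since the image consists only of characters of the form $\rho^k + \rho^{-k}$, so the generator $\rho$ of $R(C_p)$ is never itself in the image.

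Let $\rho$ generate $R(C_p)$ and set $t = \rho - 1$, so that $\Gamma^n(C_p) = (t^n)$. The central identity I need is
\[
  i^*\bigl(\gamma^2(\chi - 2)\bigr) \;=\; \gamma^2(\rho + \rho^{-1} - 2) \;=\; (\rho-1)(\rho^{-1}-1) \;=\; -\rho^{-1}t^2,
\]
which follows from the addition formula $\gamma^2(a+b) = \gamma^2(a) + ab + \gamma^2(b)$ together with the fact that $\gamma^2$ vanishes on the linear virtual characters $\rho - 1$ and $\rho^{-1} - 1$. Since $-\rho^{-1}$ is a unit of $R(C_p)$, every $t^n r$ with $r \in R(C_p)$ and $n \geq 2$ may be rewritten as $-\rho\, t^{n-2} r \cdot i^*(\gamma^2(\chi - 2))$, and the projection formula then yields
\[
  \ind_H^{D_p}(t^n r) \;=\; -\gamma^2(\chi - 2)\cdot \ind_H^{D_p}(\rho\, t^{n-2} r) \;\in\; \Gamma^2(D_p)\cdot \ind_H^{D_p}\bigl(\Gamma^{n-2}(C_p)\bigr).
\]
A straightforward induction on $n$, with base cases $n = 0, 1$ (both trivial, since $\ind_H^{D_p}$ doubles augmentations and thus sends $I(C_p)$ into $I(D_p) = \Gamma^1(D_p)$), then gives $\ind_H^{D_p}(\Gamma^n(C_p)) \subseteq \Gamma^n(D_p)$ for all $n$. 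The hard part I anticipate is the identity above: it expresses the fact that $t^2$ already lies in the restriction image, up to the unit $-\rho^{-1}$, even though $t$ alone does not, and it is the only non-routine ingredient of the argument. Combining the two cases, every subgroup of $D_p$ is $\Gamma$-compatible, so the filtrations $(F^n)$ and $(\Gamma^n)$ coincide and $D_p$ is saturated.
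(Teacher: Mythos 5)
Your proof is correct and takes essentially the same route as the paper, which also reduces to checking $\Gamma$-compatibility of $C_2$ (via surjectivity of restriction) and of $C_p$ (by pulling back $C_2(\chi)$ and applying the projection formula, just as in the $Q_8$ case). One welcome refinement: the paper simply writes $\res(Y) = -C_1(\rho)^2$, whereas the precise identity is $\res(C_2(\chi)) = (\rho-1)(\rho^{-1}-1) = -\rho^{-1}C_1(\rho)^2$; you correctly retain the unit $-\rho^{-1}$, which is invisible in the associated graded ring but does matter at the level of virtual characters, where the projection-formula induction actually takes place.
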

\begin{proof}
	Since $D_p = C_p \rtimes C_2$ and $C_p$, $C_2$ are abelian, these are the maximal saturated subgroups of $D_p$. The signature $\varepsilon$ of $D_p$ restricts on $C_2$ to the representation $\rho$, which generates $R(C_2)$. Thus $C_2$ is $\Gamma$-compatible with $D_p$, and we only need to look at $C_p$.\\
	Since $\res(Y) = -C_1(\rho)^2$ the same argument as in the proof of \cref{Q8_is_saturated} applies. \\
\end{proof}

		\subsection{Projective linear groups}
We compute the saturated character ring of $G=PSL(2,p)$, the projective special linear group over $\FF_p$, where $p$ is an odd prime such that $p \equiv 3,5 (\Mod 8)$. Note that we do not use any information about the character table of $G$: we only need to know those of its Sylow subgroups, which are all abelian. For each prime $l$ dividing $|G| = \frac{p(p+1)(p-1)}{2}$, let $H_l = Syl_l(G)$ and $N_l = N_G(H_l)$. For each $l$, we determine the $l$-Sylow of $G$ and the action of its normalizer, then deduce the stable element subring. There are $4$ possible cases:
	\begin{enumerate}
		\item $l = p$. Then $H_p \cong C_p$ is generated by the matrix $\begin{pmatrix} 1 & 1 \\ 0 & 1 \end{pmatrix}$. The normalizer of $H_p$	is the group:
		\[
			N_p = \left\lbrace \begin{pmatrix} a & b \\ 0 & a^{-1} \end{pmatrix}  \in PSL(2,p) \right\rbrace,
		\]
		with action
		\[
			\begin{pmatrix} a & b \\ 0 & a^{-1}\end{pmatrix} \cdot \begin{pmatrix} 1 & n \\ 0 & 1 \end{pmatrix} \cdot \begin{pmatrix} a^{-1} & -b \\ 0 & a \end{pmatrix} = \begin{pmatrix} 1 & a^2n \\ 0 & 1 \end{pmatrix},
		\]
		inducing $\rho \mapsto \rho^{a^2}$ for a generator $\rho$ of $R(C_p)$. On $ \rr^*(C_p) \cong \frac{\ZZ[x]}{(px)} $, this induces $x\mapsto a^2x$. The subring generated by $x^{\frac{p-1}{2}}$ is stable by this action, and conversely if $a$ is an element of multiplicative order $(p-1)$, then a monomial $x^m$ being stable by the action $x \mapsto a^2x$ implies that $m$ is a multiple of $\frac{p-1}{2}$. Thus
		\begin{equation}
			\rr^*(H_p)^{N_p} \cong \frac{\ZZ[u]}{(pu)}, \ \ \ \ |u| = \frac{p-1}{2}.
		\end{equation}

		\item $l$ is an odd prime dividing $(p-1)$. Then $H_l \cong C_{l^i}$ for some integer $i$, generated by $\begin{pmatrix} n & 0 \\ 0 & n^{-1} \end{pmatrix} $ for some $n$ of order $l^i$ in $\FF_p\x$. A straightforward computation gives that $N_l$ is generated by diagonal matrices (which commute with the elements of $H_l$) together with the matrix $\begin{pmatrix} 0 & 1 \\ -1 & 0 \end{pmatrix}$ which sends an element $h \in H_l$ to its inverse. The induced action on the representation ring is $\rho \mapsto \rho^{-1}$, which translates as $x \mapsto -x$ in the graded ring. Thus
		\begin{equation}
		\rr^*(H_l)^{N_l} \cong \frac{\ZZ[x]}{(l^ix)}, \ \ \ \ |x| = 2.
		\end{equation}

		\item $l=r$ is an odd prime dividing $p+1$. We prove that $H_r$ is cyclic. Note that the $r$-Sylow of $G$ is isomorphic to that of $G' := PSL(2,p^2)$ since the index of $G$ in $G'$ is coprime to $r$. Let $\alpha \in \FF_{p^2}\x$ have multiplicative order $r^i$.  The matrix $A' = \begin{pmatrix} \alpha & 0 \\ 0 & \alpha^{-1} \end{pmatrix}$ generates a cyclic group $H_r'$ of order $r^i$ in $G'$, which is thus an $r$-Sylow subgroup.
		We have $\alpha \notin \FF_p\x$, however any matrix of $G$ similar to $A$ generates an isomorphic group in $G$. One can take for example $A = \begin{pmatrix} 0 & -1 \\ 1 & \alpha+\alpha^{-1} \end{pmatrix}$, the companion matrix to the minimal polynomial of $\alpha$.

		The normalizer $N_r'$ of $C_{r^i}'$ in $G'$ is a dihedral group of order $p^2-1$, generated by all diagonal matrices together with the matrix $\begin{pmatrix} 0 & 1 \\ -1 & 0 \end{pmatrix}$ which sends $A$ to its inverse. The change of basis sending $A$ to $A'$ allows us to view $N_r$ as a subgroup of $N_r'$, and thus the elements of $N_r$ act either trivially or by inversion on $H_r$.

		It remains to show that there exists a matrix $S \in G$ such that $S^{-1}AS = A^{-1}$. Let $a = \alpha+\alpha^{-1}$. By a direct calculation, one shows that any matrix of the form $\begin{pmatrix} -x & y \\ ax+y & x \end{pmatrix}$ in $GL(2,p)$ satisfies this property, thus $S \in PSL(2,p)$ exists if and only if there is a pair $(x,y) \in \Fp^2$ such that $-x^2 -axy - y^2 = 1$.
		This equation is equivalent to $X^2 +1 = bY^2$, with~$X = x+\frac{a^2}{4}y$, $Y = y$ and~$b =\frac{a^2}{4} -1$. There are $(p+1)/2$ squares in $\Fp$ (including $0$), so there are $(p+1)/2$ elements of the form $X^2+1$, and, if $b \neq 0$ then there are also $(p+1)/2$ elements of the form $bY^2$.
		Thus whenever $b \neq 0$, the set of elements of the form $X^2+1$ and the set of elements of the form $bY^2$ have nontrivial intersection, and there is a solution to $x^2 + axy + y^2 = -1$. Now, $b = 0$ if and only if $a^2 = 4$, that is, $a = \pm 2 (\Mod p)$. But then $\alpha$ is a solution of $t^2 \pm 2t +1$, that is, $\alpha = \alpha^{-1}$ has multiplicative order $2$, in contradiction with our assumption. Thus $b$ is always nonzero, which completes the proof.\\
		We have:
		\begin{equation}
			\rr^*(H_r)^{N_r} \cong \frac{\ZZ[y]}{(r^iy)}, \ \ \ \ |y| = 2.
		\end{equation}

		\item $l = 2$. Since $p \equiv 3,5 (\Mod 8)$, the $2$-Sylow subgroup of $G$ has order $4$. There are two cases:
		\begin{itemize}
			\item if $p \equiv 5 (\Mod 8)$, then $-1$ is a quadratic residue, so let $a$ satisfy $a^2 \equiv -1 (\Mod p)$. Then
			\[
				H_2 = \left\langle h_1 := \begin{pmatrix} a & 0 \\ 0 & -a \end{pmatrix}, h_2 := \begin{pmatrix} 0 & a \\ a & 0 \end{pmatrix} \right\rangle.
			\]
			We show that $N_2 \cong A_4$. First, we have $C_G(h_1)\cap N_G(H_2) = \{Id\}$, as a direct calculation shows, and similarly for $h_2$ and $h_1h_2 =: h_3$. Therefore, if $N \in N_2$ acts nontrivially on $H_2$, it must permute all $3$ nontrivial elements. If $T = \begin{pmatrix} x & -ax \\ x & ax \end{pmatrix}$, with $x^2 = \frac{1}{2a}$, then $Th_1T^{-1} = h_2$ and $Th_2T^{-1} = h_3$. Both $2$ and $a$ are nonresidues $\Mod p$ since $p \equiv 5 (\Mod 8)$ and if $a$ were a residue, then $PSL(2,p)$ would contain an element of order 4, contradicting $H_2 \cong C_2\times C_2$. Thus there is an $x$ satisfying $x^2 = 1/2a$. Moreover $T$ is unique up to multiplication by an element of $C_G(H_2) = H_2$, which shows that $N_2 = \left\langle T, H_2 \right\rangle \cong A_4$.\\
		\item if $p \equiv 3 (\Mod 8)$, then $-2$ is a residue, so let $b$ satisfy $b^2 \equiv -2 (\Mod p)$. Then
			\[
				H_2 = \left\langle \begin{pmatrix} 0 & -1 \\ 1 & 0 \end{pmatrix}, \begin{pmatrix} b & 1 \\ 1 & -b \end{pmatrix} \right\rangle
			\]
			Again, we have $N_2 \cong A_4$ acting by cyclic permutations, generated by $H_2$ together with the matrix $T = \begin{pmatrix} \frac{1}{b} & \frac{1}{b} \\ -\frac{b+2}{2} & \frac{b-2}{2} \end{pmatrix}$.\\
		\end{itemize}
		In both cases the normalizer acts as cyclic permutations on the nontrivial elements of $H_2$, and thus:
			\begin{equation}
				\rr^*(H_2)^{N_2} \cong \frac{\ZZ[z,t]}{(2z,2t, z^3 - t^2)}, \ \ \ \ |z| = 2, |t| = 3.
			\end{equation}
	\end{enumerate}
	Putting all of this together, we get:
	\begin{Theorem} \label{rr*psl2p}
		Let $G = PSL(2,p)$ be the projective special linear group over $\FF_p$, where $p$ is an odd prime such that $p \equiv 3,5 (\Mod 8)$. Write:
		\[
			|G| = 4\cdot p\cdot l_1^{i_1}\cdots l_n^{i_n}\cdot r_1^{j_1}\cdots r_m^{j_m}, \ \ \ \ \text{ with } \ \ l_k|(p-1), \ \  r_k|(p+1).
		\]
		Then:
		\begin{equation}
			\rr^*(G) \cong \frac{\ZZ[x_1,\cdots,x_n, y_1,\cdots y_m, z, t, u]}{(l_k^{i_k}x_k, r_k^{j_k}y_k, 2z,2t, pu, z^3 - t^2)}
		\end{equation}
		with $|x_k| = |y_k| = |z| = 2$, $|t| = 3$, $|u| = (p-1)/2$, and:
		\begin{enumerate}
			\item $x_k = \ind_{H_{l_k}}^G (x_k^2)$, $y_k = \ind_{H_{r_k}}^G(y_k^2)$ where $x_k$ (resp. $y_k$) is a generating class of the ring $R^*_\CC(C_{l_k^{i_k}})$ (resp. $R_\CC(C_{r_k^{j_k}})$).
			\item $u = \ind_{H_p}^G(u)^{(p-1)/2})$ where
			\item $z = \ind_{H_2}^G(t_1^2 + t_1t_2 + t_2^2)$ and $t = \ind_{H_2}^G(t_1^3 +t_1^2t_2 + t_2^3)$
		\end{enumerate}
	\end{Theorem}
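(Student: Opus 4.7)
The plan is to apply Corollary \ref{swanslemma} one prime at a time. Since $\rr^*(G)$ is annihilated by $|G|$, it splits additively as $\bigoplus_{l \mid |G|} \rr^*(G)_{(l)}$; and for every prime $l$ dividing $|G|$ the $l$-Sylow $H_l$ of $G = PSL(2,p)$ turns out to be abelian, hence $\rr^*(H_l) = R^*(H_l)$ by Proposition \ref{abeliangroupsaresaturated} with an explicit description in terms of Chern classes. So it suffices, for each prime $l$, to identify $H_l$, to work out $N_G(H_l)$ and its action on $H_l$ (hence on $R^*(H_l)$), and to read off the invariants, then sum up the four stable subrings.

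The relevant primes come from $|G| = \tfrac12 p(p-1)(p+1) = 4\,p\,\prod l_k^{i_k}\,\prod r_k^{j_k}$ with $l_k \mid (p-1)$, $r_k \mid (p+1)$. For $l = p$, the Sylow $C_p$ sits inside a Borel and a diagonal torus element acts by $x \mapsto a^2 x$ on $R^*(C_p) = \ZZ[x]/(px)$; taking $a$ of order $p-1$ forces the invariants to be generated by $u = x^{(p-1)/2}$. For odd $l_k \mid (p-1)$, the Sylow is diagonal cyclic and the Weyl element $\begin{pmatrix}0 & 1\\-1 & 0\end{pmatrix}$ inverts it, so $x \mapsto -x$ and the invariants are generated by $x^2$ in degree $2$. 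The $r_k$-case is the first genuinely delicate one: a generator of $H_{r_k}$ has eigenvalues in $\FF_{p^2} \setminus \FF_p$, so I would first pass to $G' = PSL(2,p^2)$ (whose $r_k$-Sylow equals that of $G$, the index being coprime to $r_k$), diagonalize there, identify the normalizer as dihedral, and then pull the description back by exhibiting an explicit $S \in PSL(2,p)$ conjugating the companion matrix of the generator to its inverse.

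The main structural obstacle is the case $l = 2$. Since $p \equiv 3, 5 \pmod 8$ the $2$-part of $|G|$ is exactly $4$, and because $PSL(2,p)$ contains no element of order $4$ in these congruence classes, $H_2 \cong C_2 \times C_2$; I would split according to whether $-1$ or $-2$ is a quadratic residue, write down explicit generators in each subcase, and exhibit a normalizer element $T$ that cyclically permutes the three involutions of $H_2$, identifying $N_G(H_2) \cong A_4$. Feeding this into Lemma \ref{R*(C2C2)}, the computation reduces to taking $C_3$-invariants of $\ZZ[t_1,t_2]/(2t_1,2t_2,t_1^2 t_2 - t_1 t_2^2)$ under $t_1 \mapsto t_2 \mapsto t_1+t_2$; I expect the invariants to be generated by $z = t_1^2 + t_1 t_2 + t_2^2$ in degree $2$ and $t = t_1^3 + t_1^2 t_2 + t_2^3$ in degree $3$, with the single relation $z^3 = t^2$ visible after reduction modulo $2$ and the projective formula.

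Assembling the four invariant subrings and summing their presentations yields the displayed ring. The identification of the generators $x_k, y_k, z, t, u$ as transfers in clauses (i)--(iii) then follows from Corollary \ref{corestrictionissurjective} together with $\ind_H^G \circ \res^G_H = [G:H]$ on each $l$-primary part: if $v \in \rr^*(H_l)^{N_G(H_l)}$ corresponds to $w \in \rr^*(G)_{(l)}$ under the Swan isomorphism, then $w$ is a unit multiple of $\ind_{H_l}^G(v)$, and rescaling selects the stated representative. I expect the bulk of the real work to lie in the structural arguments of the $r_k$-case (existence of a conjugating matrix $S$ over $\FF_p$, which boils down to a counting argument for $\FF_p$-points on a conic, after eliminating the degenerate case $\alpha + \alpha^{-1} = \pm 2$ by the order hypothesis) and in the $2$-case (the identification of $N_G(H_2)$ with $A_4$); everything else is bookkeeping once the Sylow data are in hand.
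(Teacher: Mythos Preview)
Your proposal is correct and follows essentially the same route as the paper: apply Swan's lemma (Corollary~\ref{swanslemma}) prime by prime, identify each Sylow and its normalizer exactly as you describe (including the passage to $PSL(2,p^2)$ and the conic-counting argument for the $r_k$-case, and the $A_4$ identification split into the $p\equiv 5$ and $p\equiv 3$ subcases for $l=2$), and read off the invariants. Your added remark on recovering the generators as explicit transfers via $\ind\circ\res = [G:H]$ is a bit more explicit than the paper, which simply records them in the statement.
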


\begin{Remark}
	For $p=3$, this is the saturated ring $\rr^*(A_4)$.
\end{Remark}


\section{Tambara functors, the ungraded case} \label{tambara}
	After discussing whether the graded character ring functor is Mackey, it seems natural to turn to the theory of Tambara functors, which was introduced by Tambara in \cite{tambara}; they can be understood as Mackey functors $S(-)$ that are equipped, for each subgroup $H \leq G$, with a multiplicative transfer map $S(H) \to  S(G)$. In cohomology, this is the Evens norm map (see for example \cite[Ch. 6]{carlson-townsley}). In the case of graded character rings, tensor induction of representations is a natural candidate for the role of the multiplicative transfer. We must begin, however, with the ungraded situation: the fact that the multiplicative transfer turns $K_G(X)$ into a Tambara functor is mentioned without proof in both \cite{strickland} and \cite{tambara}, and we propose here a proof for the sake of completeness.

	To define Tambara functors, we need the notion of exponential diagrams. Let $\Gset/X, \Gset/Y$ be the categories of $G$-sets over $X, Y$ respectively, and let an equivariant map $f:X\to Y$ be given. The pullback functor $\Gset/Y \to \Gset/X$ has a right adjoint $\Pi_f: \Gset/X \to \Gset/Y$, which we now describe. Let $p: A\to X$ be a set over $X$.
	We construct $q:\Pi_f A \to Y$ as follows:
	\[
		\Pi_f A = \bigsqcup_{y \in Y} \textrm{sec}_p(f^{-1}(y), A),
	\]
	where we write $\textrm{sec}_p(U,A)$, given a subset $U\subset X$, for the set of all sections of $p$ over $U$, that is, maps $s:U\to A$ such that $p \circ s(u) = u$ for all $u\in U$.

	Then $\Pi_fA$ is a $G$-set if we define $^gs: f^{-1}(gy) \to A$, $x \mapsto g\cdot s(g^{-1}\cdot x)$, and of course there is an obvious map $\Pi_fA \to Y$. The adjointness property means that, as is easily established,
	\[
		\Hom_{\Gset/X}(P(B),A) \cong \Hom_{\Gset/Y}(B, \Pi_fA)
	\]
	for all appropriate $A,B$, where $P$ is the pullback functor. In particular for $B = \Pi_fA$, there is an element $e\in \Hom_{\Gset/X}(P(\Pi_fA), A)$ corresponding to the identity of $B$. It is involved in the following commutative diagram:
	\[
		\xymatrix{ X\times_Y \Pi_fA \ar[d]_{f'} \ar[r]^-e & A \ar[r]^p & X \ar[d]^f \\
			\Pi_fA \ar[rr]_q & & Y \\
		}.
	\]
	Seeing the pullback as pairs $(x, s)$ with $x\in X$, and $s \in \textrm{sec}_p(f^{-1}f(x),A)$ such that $f(x) = q(s)$, the map $f'$ is just a projection on the second coordinate.
	A diagram isomorphic to the one above is called an exponential diagram.
	\begin{Definition*}[{\cite[\S 2]{tambara}}]
		Let	$X,Y$ be $G$-sets and $f: X\to Y$ a $G$-set map. A semi-Tambara functor is a function $S(-)$ associating to $X,Y$, (semi)-rings $S(X), S(Y)$ and to $f$ three maps $f^*: S(Y) \to S(X)$, $f_*, f_\sharp: S(X) \to S(Y)$ such that the following conditions are satisfied:
		\begin{enumerate}
			\item $f^*$, $f_*$, $f_\sharp$ are homomorphism of rings, additive monoids, multiplicative monoids respectively.
			\item The triples $(S, f^*, f_*)$ and $(S, f^*, f_\sharp)$ form semi-Mackey functors.
			\item If
			\[
				\xymatrix{ X' \ar[d]_{f'} \ar[r]^e & Z \ar[r]^p & X \ar[d]^f \\
					Y' \ar[rr]_q & & Y \\
				}
			\]
			is an exponential diagram, then the corresponding diagram
			\[
				\xymatrix{ S(X') \ar[d]_{f'_\sharp}  & S(Z) \ar[l]_-{e^*} \ar[r]^{p_*} & S(X) \ar[d]^{f_\sharp} \\
					S(Y') \ar[rr]_{q_*} & & S(Y) \\
				}
			\]
			commutes.
		\end{enumerate}
		If additionally, $S$ associates a ring to a $G$-set, and $f_*$ is a homomorphism of additive groups, then $S$ is a Tambara functor.
	\end{Definition*}

	\begin{Theorem}
		The functor $K_G^+(-)$ with the restriction, induction and tensor inducion maps described in \Cref{definitions}, is a semi-Tambara functor.
	\end{Theorem}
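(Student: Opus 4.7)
The plan is to verify each of the three axioms of a semi-Tambara functor in turn, in order of increasing difficulty.

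For axiom (1), I would check directly that $f^*$ preserves both $\oplus$ and $\otimes$ fibrewise (hence is a semiring homomorphism), and that $f_*$, $f_\sharp$ respect $\oplus$ and $\otimes$ respectively. All of this is immediate from the fibrewise formulas recalled in \Cref{definitions}, since $(f^*(V\oplus W))_x = V_{f(x)}\oplus W_{f(x)}$ and similarly for the tensor product, while the two push-forwards are defined fibrewise by $\oplus$ and $\otimes$ themselves.

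For axiom (2), the Mackey condition for both $(K_G^+, f^*, f_*)$ and $(K_G^+, f^*, f_\sharp)$, I would reduce to a fibrewise comparison. Given a pullback square of $G$-sets, the fibre of a push-forward at a point is a direct sum (resp.\ tensor product) over a preimage, and the universal property of the pullback provides a canonical, $G$-equivariant bijection between the corresponding indexing sets; this identification then induces the required isomorphism of bundles.

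Axiom (3) is the substantive step. For an exponential diagram as in the statement and a bundle $V$ on $Z$, I would compute at $y \in Y$
\[
(f_\sharp p_* V)_y \;=\; \bigotimes_{x \in f^{-1}(y)} \bigoplus_{z \in p^{-1}(x)} V_z,
\]
and, using that $e(x,s)=s(x)$ and that $(f')^{-1}(s)$ is canonically identified with $f^{-1}(q(s))$,
\[
(q_* f'_\sharp e^* V)_y \;=\; \bigoplus_{s \in q^{-1}(y)} \bigotimes_{x \in f^{-1}(y)} V_{s(x)},
\]
where $q^{-1}(y)$ is the set of sections of $p$ over $f^{-1}(y)$. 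These two expressions are canonically isomorphic by distributivity: a simple tensor appearing in the expansion of the first is specified by choosing, for each $x\in f^{-1}(y)$, an element $s(x)\in p^{-1}(x)$ (i.e.\ a section $s$) together with a vector in $V_{s(x)}$.

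The main obstacle will be checking that this distributive-law isomorphism is $G$-equivariant. The action on sections sends $s$ to ${}^g s\colon x \mapsto g\cdot s(g^{-1}x)$, while on vectors $e\in V_z$ it sends $e$ to $g\cdot e\in V_{gz}$. Using the shorthand $g\cdot(\bigotimes_x e_x)=\bigotimes_x g\cdot e_{g^{-1}x}$ and $g\cdot(\bigoplus_x e_x)=\bigoplus_x g\cdot e_{g^{-1}x}$ recalled in \Cref{definitions}, I expect a careful bookkeeping argument to show that the rearrangement of indices underlying distributivity intertwines the two $G$-actions, which will finish the verification.
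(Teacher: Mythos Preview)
Your proposal is correct and follows essentially the same approach as the paper: a fibrewise computation of both sides of the exponential axiom, identification via the distributive law, and a check of $G$-equivariance of that identification. The only differences are cosmetic---the paper first reduces each axiom to the case of transitive $G$-sets $G/K$, $G/H$, $G/J$ before computing, and it separately records the (trivial) coproduct axiom for the pair $(K_G^+, f^*, f_\sharp)$, which you omit and should mention for completeness.
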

	\begin{proof}
		Since $(K_G^+(-), f^*, f_*)$ is a Mackey functor, so we only concern ourselves with the properties of $f_\sharp$.
		\begin{enumerate}
			\item The fact that $f_\sharp$ is a homomorphism of multiplicative monoids follows from the properties of the tensor product.
			\item To show that $(S,f^*,f_\sharp)$ is a Mackey functor, we check both axioms from the definition in \Cref{stable_elements}. Let
			\[ \xymatrix{ \Omega_1 \ar[r]^\alpha \ar[d]_\beta & \Omega_2 \ar[d]^\gamma \\
					\Omega_3 \ar[r]_\delta & \Omega_4 }
			\]
			be a pullback diagram of $G$-sets, then we want to check that $\delta^* \gamma_\sharp = \beta_\sharp \alpha^*$. Note that because any $G$-set can be expressed as a disjoint union of orbits, it is sufficient to check this axiom on pullback diagrams of the form:
				\[
				\xymatrix{ \Omega \ar[r]^\alpha \ar[d]_\beta & G/K \ar[d]^\gamma \\
					G/H \ar[r]_\delta & G/J}
				\]
				for $H,K \leq J \leq G$. Let $W$ be a vector bundle over $G/K$ and let $V = \delta^*\gamma_\sharp(W)$, then for $x\in G/H$, we have:
				\[
					V_x = \left(\delta^* \gamma_\sharp (W)\right)_{x} = \bigotimes_{t\in G/K} W_t,
				\]
				where the tensor product is taken over all of the $tK \in G/K$ such that $\gamma(tK) = \delta(xH)$. The action of $g$ takes $V_{x}$ to $V_{g\cdot x}$ and can be written $g\cdot\bigotimes_{tK\in G/K} v_{tK} = \bigotimes_{tK\in G/K}g\cdot v_{g^{-1}tK}$. The vector bundle $E := \beta_\sharp \alpha^*(W)$ is defined by
				\[
					E_x = \left(\beta_\sharp \alpha^*(W)\right)_x = \bigotimes_{s\in G/K} W_s,
				\]
				where the tensor product is taken over all $s$ such that $(sK,xH) \in \Omega$, which is equivalent to requiring $\gamma(tK) = \delta(xH)$. The action of $G$ is given by $g\cdot\bigotimes_{s\in G/K}e_s = \bigotimes_{s\in G/K}g\cdot e_{g^{-1}\cdot s}$, and thus $E$ and $V$ are isomorphic vector bundles.

				For the second axiom, consider two finite $G$-sets $\Omega, \Psi$ and the corresponding inclusion maps $i_\Omega, i_\Psi: \Omega, \Psi \to \Omega\sqcup\Psi$. Then the map $f_\sharp: K_G^+(\Omega)\oplus K_G^+(\Psi) \to K_G^+(\Omega\sqcup\Psi)$ whose components are given by $i_{\Omega,\sharp}, i_{\Psi,\sharp}$ should be an isomorphism. This is obviously the case.

				\item Again, we can assume without loss of generality that $A = G/K, X = G/H, Y = G/J$ where $K \leq H \leq J$. We denote by $\pi_K^H: G/K \to G/H$ the map sending a coset $tK$ to the corresponding coset $tH$ in $G/H$, and similarly for $\pi_K^J$, $\pi_H^J$. Then the set $\Pi_{\pi_H^J}G/K$ above $G/J$ is the set of sections $s : {\pi_{H}^J}^{-1}(yJ) \to G/K$ such that for any $tK \in G/K$ satisfying $tJ = yJ$, we have $s(tH) = tK$. Consider the diagram:
				\[
					\xymatrix{ \Pi_{\pi_H^J}G/K\bigtimes_{G/J} G/K \ar[d]_{f} \ar[r]^-e & G/K \ar[r]^{\pi_K^H} & G/H \ar[d]^{\pi_H^J} \\
						\Pi_{\pi_H^J}G/K \ar[rr]_q & & G/J \\
					}
				\]
				where $e,f$ are projections and $q$ is the map sending a section $s: {\pi_{H}^J}^{-1}(yJ) \to G/K$ to $yJ$. The third axiom for Tambara functors says that the corresponding diagram:
				\[
					\xymatrix{ K_G^+\left(\Pi_{\pi_H^J}G/K\bigtimes_{G/J} G/K\right) \ar[d]_{f_\sharp} & \ar[l]_-{e^*} K_G^+\left(G/K\right) \ar[r]^{{\pi_K^H}_*} & K_G^+\left(G/H\right) \ar[d]^{{\pi_H^J}_\sharp} \\
						K_G^+\left(\Pi_{\pi_H^J}G/K\right) \ar[rr]_{q_*} & & K_G^+\left(G/J\right) \\
					}
				\]
				should commute. For convenience, let $X = \Pi_{\pi_H^J}G/K\bigtimes_{G/J} G/K$. Consider $W \in K_G^+\left(G/K\right)$, then on the one hand:
				\[
					V_{yJ} := \left({\pi_H^J}_\sharp{\pi_K^H}_*(W)\right)_{yJ} = \bigotimes_{xH \subseteq yJ}\left(\bigoplus_{tK \subseteq xH}W_{tK}\right)
				\]
				and on the other hand:
				\[
					E_{yJ} := \left(q_* f_\sharp e^*(W)\right)_{yJ} = \bigoplus_{s\in q^{-1}(y)}\left(\bigotimes_{(s,tK) \in X}W_{tK}\right).
				\]
				The fact that $V_{yJ} \cong E_{yJ}$ as vector spaces comes from the distributivity property of the tensor product with respect to the direct sum, as well as the definition of the exponential functor $\Pi_{\pi_H^J}: \Gset/(G/H) \to \Gset/(G/J)$. To construct each term of the sum in $E_{yJ}$, we pick a section $s: {\pi_H^J}^{-1}(yJ) \to G/K$. Each term is then a product of all spaces of the form $W_{tK}$ with $tK = s(xH)$ for $xH \in {\pi_H^J}^{-1}(yJ)$. Summing over all possible such sections $s$, we get all possible combinations of factors in $V_{yJ}$.
				So $E_{yJ}$ is just a rewriting of $V_{yJ}$. The action of $g\in G$ is given by
				\[
				g\cdot \left(\bigotimes_{xH\subseteq yJ}\left(\bigoplus_{tK\subseteq xH} w_{tK}\right)\right) \mapsto \bigotimes_{xH\subseteq yJ}\left(\bigoplus_{tK\subseteq xH} g\cdot w_{g^{-1}\cdot tK}\right).
				\]
				On the other hand, the action of $G$ on $\Pi_{\pi_H^J}G/K$ is given by $g\cdot s = {\pi_H^J}^{-1}(gyJ) \to G/K, \ gxH \mapsto g\cdot s(g^{-1}(gxH))$, that is, $g\cdot s$ maps $g\cdot xH$ to $g\cdot s(xH)$. This means that the permutation of the factors induced by the action of $g$ on $E$ is the same as the one on $V$.
			\end{enumerate}
	\end{proof}

 	The following result by Tambara shows that, in fact, $K_G(X)$ is a Tambara functor. For an abelian monoid $M$, let $\gamma M$ be the universal abelian group with monoid map $k_M: M \to \gamma M$, and generators $k_M(m)$ for $m\in M$ and relations $k_M(m+m') = k_M(m)+k_M(m')$ for $m,m' \in M$. If $M$ is a semi-ring, then $\gamma M$ has a unique ring structure such that $k_M$ is a semi-ring map.
	\begin{Theorem}[{\cite[Th. 6.1]{tambara}}]
		Let $S$ be a semi-Tambara functor. Then the function which assigns the set $\gamma S(X)$ to each $G$-set $X$ has a unique structure of a Tambara functor such that the maps $k_{S(X)}$ form a morphism of semi-Tambara functors.
	\end{Theorem}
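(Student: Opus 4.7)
\emph{Proof plan.} I would split the argument into uniqueness and existence. For uniqueness, the ring structure on $\gamma S(X)$ is forced by that of $S(X)$, and the extensions of the ring homomorphism $f^{*}$ and the additive homomorphism $f_{*}$ are determined by the universal property of Grothendieck completion. The extension of $f_{\sharp}$ is then pinned down by the exponential diagram axiom: one derives an identity expressing $f_{\sharp}(a) \in \gamma S(Y)$ in terms of $f_{\sharp}(a-b)$ and $f_{\sharp}(b)$ for $a, b \in S(X)$, which can then be solved for $f_{\sharp}(a-b)$ inside the ring $\gamma S(Y)$.

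For existence, the maps $f^{*}$ and $f_{*}$ extend immediately by the universal property. The construction of $f_{\sharp}$ on $\gamma S(X)$ is the core point. I would extract from the exponential diagram axiom an \emph{addition formula}: applied to the pullback of $f: X \to Y$ along the fold map $X \sqcup X \to X$, this axiom expresses $f_{\sharp}(a+b)$ as a sum, over sections of an auxiliary exponential $G$-set over $Y$, of products of terms of the form $f_{\sharp}(a|_U) \cdot f_{\sharp}(b|_V)$ pushed forward by appropriate Mackey transfers. This is a polynomial expression with integer coefficients in elements of $S(Y)$, hence makes sense in $\gamma S(Y)$; applied symbolically to the decomposition $a = (a-b) + b$, it defines $f_{\sharp}(a-b) \in \gamma S(Y)$.

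The main obstacle is verifying that this extended $f_{\sharp}$ is well-defined on $\gamma S(X)$, multiplicative with respect to the ring structure, and that the triple $(f^{*}, f_{*}, f_{\sharp})$ still satisfies the three Tambara axioms. Well-definedness amounts to checking that the addition formula, applied to two different representations $a - b = a' - b'$, yields the same element of $\gamma S(Y)$; this reduces to identities valid in $S$ by the semi-Tambara axioms and extended additively to $\gamma S$. Multiplicativity and the exponential diagram axiom similarly follow by reducing each identity to a polynomial equation in $S$ that has already been verified. Finally, universality --- that $\gamma S$ is initial among Tambara functors receiving a morphism from $S$ --- follows directly from the universal property of the group completion, since all structure maps on any target Tambara functor take values in abelian groups and therefore descend from $S(X)$ to $\gamma S(X)$.
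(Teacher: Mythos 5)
Note first that the paper does not give a proof of this statement: it is quoted directly from Tambara's article (hence the citation in the theorem header), and the only glimpse of the argument appears later, in \Cref{additionformula}, where the author remarks that the crux of Tambara's proof is that the image of the map $\chi : S(X) \to S(V)$ (with $V = \mathcal{P}(G/H)$) lies in a subset of $S(V)$ that is a group under the auxiliary operation $\vee$.

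Your plan correctly identifies the addition formula as the central tool, but the step where you claim the formula ``can then be solved for $f_\sharp(a-b)$ inside the ring $\gamma S(Y)$'' has a genuine gap. The addition formula for $f_\sharp(u+v)$ does not express the answer in terms of $f_\sharp(u)$, $f_\sharp(v)$, and previously known quantities alone; it involves an entire family of partial norms indexed by subsets $C$ of the fibres (the quantities written $\chi(u)_C$ in the paper's notation), of which $f_\sharp(u)$ is only the top coordinate. Substituting $u = a-b$, $v = b$ therefore gives a single equation with many unknowns $\chi(a-b)_C$, so one cannot directly isolate $f_\sharp(a-b) = \chi(a-b)_{G/H}$. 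Tambara's resolution is to enlarge the target: he works with the map $\chi : S(X) \to S(V)$, defines the commutative monoid operation $\vee$ on $S(V)$, proves that $\chi$ is a monoid homomorphism $(S(X),+) \to (S(V),\vee)$, and shows that the image of $\chi$ lies in the subset of elements $z$ with $z_\emptyset = 1$, which forms a \emph{group} under $\vee$ (inversion is obtained by induction on $|C|$, since the system of equations is triangular in $|C|$). Only then does one read off $f_\sharp$ as the top coordinate of the extended $\chi$. Without introducing $V$, the operation $\vee$, and this invertibility statement, your ``solving'' step is underdetermined, and well-definedness and the subsequent verification of the Tambara axioms inherit the same problem. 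The other parts of your plan --- uniqueness via the universal property of group completion, immediate extension of $f^*$ and $f_*$, and verification of the axioms by reduction to polynomial identities already holding in $S$ --- are sound and match how the argument concludes once $\chi$ has been properly extended.
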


	\begin{Corollary}\label{kgtambara}
		The functor $K_G(-)$ has the structure of a Tambara functor.
	\end{Corollary}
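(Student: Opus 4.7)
The plan is to apply the preceding theorem of Tambara to the semi-Tambara functor $K_G^+(-)$ just established. By construction, $K_G^+(X)$ is the semiring of isomorphism classes of vector bundles over $X$ under direct sum and tensor product, and $K_G(X)$ is its Grothendieck ring; in Tambara's notation, this is exactly $K_G(X) = \gamma K_G^+(X)$. The theorem then endows the functor $X \mapsto \gamma K_G^+(X)$ with a unique Tambara functor structure for which the canonical maps $k_{K_G^+(X)} \colon K_G^+(X) \to K_G(X)$ assemble into a morphism of semi-Tambara functors.

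Concretely, I would proceed in three short steps. First, identify $K_G(X) = \gamma K_G^+(X)$ as rings, which is immediate from the definition of the Grothendieck ring of a semiring. Second, feed the semi-Tambara structure $(f^*, f_*, f_\sharp)$ on $K_G^+$ provided by the previous theorem into Tambara's theorem. Third, verify that on orbits $X = G/H$, the resulting maps on $K_G$ coincide with the classical restriction, additive induction, and tensor induction of virtual representations: this is forced by the uniqueness clause in Tambara's theorem, together with the fact that $f^*$ and $f_*$ are already additive monoid maps on $K_G^+$ (and hence extend canonically and uniquely by the universal property of group completion), while the multiplicative extension of $f_\sharp$ agrees with the tensor induction of virtual characters as defined e.g.\ in \cite{curtis}.

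The only genuine point of content is the extension of the norm $f_\sharp$ from actual to virtual bundles: unlike $f^*$ and $f_*$, the norm is not additive, so the universal property of group completion does not supply an extension automatically. This is precisely what Tambara's theorem provides, through its explicit construction of $\gamma S$ out of a semi-Tambara functor $S$; once that theorem is invoked, no additional verification is required on our end, and the corollary follows immediately.
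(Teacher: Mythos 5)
Your proposal is correct and takes exactly the same route as the paper: establish that $K_G^+(-)$ is a semi-Tambara functor, identify $K_G(X)$ with the group completion $\gamma K_G^+(X)$, and invoke Tambara's Theorem~6.1 to obtain the Tambara structure. The extra remark about the nontrivial extension of $f_\sharp$ to virtual bundles is a helpful clarification but does not change the argument.
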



\section{The addition formula} \label{additionformula}
 A formula for the norm of the sum of two characters would enable us to compute the value of the norm map on negative virtual characters, a necessary step in determining whether the norm map preserves the Grothendieck filtration. Strikingly, there is no known general formula for the tensor induction of a sum of characters, or its cohomological equivalent, the Evens norm of a sum of classes. Below, we first establish a formula for the sum of two positive characters after \cite[\S 4]{tambara}; we then use this formula to determine $\nn_H^G(-\rho)$ for $\rho \in R^+(H)$, in the case of a normal subgroup $H$ of prime index in $G$, which gives us an explicit expression for the norm of a virtual character in this case. We then prove that in the case of abelian groups, the norm map preserves the Grothendieck filtration, and thus $R^*(G)$ is a Tambara functor on abelian groups.

	\subsection{A general formula for positive representations}
The following is an application of \cite[\S 4]{tambara}, where Tambara gives a general addition formula for the norm. Let $X,Y$ be $G$-sets and let $f: X\to Y$ be a $G$-map. As usual, we assume $X = G/H$, $Y = G/K$ with $H\leq K \leq G$, and $f = \pi_K^H$. Moreover, we can restrict ourselves to $K = G$. So $Y = G/G = \bullet$, the one point set. Let:
\begin{align*}
	V &= \{ C \ | \ C \subset G/H \} =: \mathcal{P}(G/H) \\
	U &= \{ (xH,C) \ | \ xH\in C, C \subset G/H \}
\end{align*}
Then we have a commutative diagram
\[
		\xymatrix{ U \ar[r]^{r} \ar[d]_{t} & G/H \ar[d]^{f} \\
							V \ar[r]_{s} & \bullet \\}
\]
where $r, t$ are projection maps. Let
\[
  \chi := t_\sharp r^*: K_G^+(G/H) \to K_G^+(V),
\]
then for a vector bundle $W \in K_G^+(G/H)$, the vector bundle $\chi(W) \in K_G^+(V)$ associates to each $C = \{x_1,\cdots,x_n\}$ the vector space $W_{x_1}\otimes\cdots\otimes W_{x_n} \cong W^{\otimes n}$, and to each $g\in G$ the linear map given by:
\[
  g\cdot\left(\bigotimes_{x_i\in C}w_{x_i}\right) = \bigotimes_{x_i \in C}\left(g\cdot w_{g^{-1}\cdot x_i}\right).
\]
So the reader must bear in mind that, in our current notation, for a representation $\rho$ of $H$ corresponding to some vector bundle $x\in K_G^+(G/H)$, we have:
\begin{equation} \label{normischi}
  f_\sharp(x) = \nn_H^G(x) = \chi(x)_{G/H}
\end{equation}
Note that, since it involves the map $t_\sharp$, the morphism $\chi$ is only defined on $K_G^+(G/H)$ for now. Throughout this section, we determine how to extend $\chi$ to vector bundles with (all) negative coefficients, then to all virtual bundles.

We define a ring operation $\vee$ on the group $K_G(V)$ as follows: let $V^{(2)}$ be the $G$-set of pairs $(C_1,C_2)$ of disjoint subsets of $G/H$. Let $p_1,p_2,m$ be the $G$-maps taking $(C_1,C_2)$ to $C_1$, $C_2$, $C_1\sqcup C_2$, respectively. Then for $z, t \in K_G(V)$, we let:
\[
	(z\vee t) = m_*(p_1^*(z)\cdot p_2^*(t)).
\]
This operation does not involve multiplicative norms (that is, it does not involve $f_\sharp$ for some map $f$), thus it is well-defined on the whole ring $K_G(V)$, and not just the semi-ring $K_G^+(V)$.

Each fiber in a vector bundle is a representation of the stabilizer of the point above which it sits; for purposes of intuition, we point out that, as a representation of $\st (C)$, we have
\begin{equation} \label{veeisind}
	 (z\vee t)_C = \bigoplus \ind_{\st(C_1, C_2)}^{\st(C)}(z_{C_1}\otimes t_{C_2}),
\end{equation}
where the direct sum is taken over all orbit representatives under $\st(C)$ of pairs $(C_1,C_2)$ such that $C_1\sqcup C_2 = C$. Since this operation only involves restrictions and inductions, it is defined for virtual characters.

By \cite[Prop. 4.4]{tambara}, the map $\chi$ is is a morphism from the monoid $(K_G^+(G/H), +)$ to $(K_G(V), \vee)$. In particular, for $\tau,\sigma\in K_G^+(G/H)$, we have:
\begin{equation}\label{sumisvee}
		f_\sharp (\sigma + \tau) = \chi(\sigma+\tau)_{G/H} =  \left(\chi\left(\sigma\right)\vee\chi\left(\tau\right)\right)_{\{G/H\}}.
\end{equation}

We now assume that $H$ is a normal subgroup of $G$. In terms of representations, we introduce the following notation for purposes of intuition: for a representation $\rho \in R^+(H)$ and $C\subset G/H$, write
\begin{equation}\label{otimespower}
	\rho^{\otimes C} := \chi(\rho)_C.
\end{equation}
This is meant to remind us of the following description. Pick a transversal set $T = \{t_1,\cdots t_n\}$ for $G/H$ and let $C\subset G/H$. Then, as a $\st C$-module, we have:
\[
  \rho^{\otimes C} = \bigotimes_{t_i}\rho^{t_i},
\]
where $\rho^{t_i}$ is the representation $\rho$ conjugated by $t_i \in G$, and the sum is over those $t_i$, whose image in $G/H$ is in $C$; the action of $\st C$ is obvious. Note that, because $H$ is normal in $G$, the representation $\rho^{\otimes C}$ does not depend on the choice of transversal set $T$, since a different coset representative $t_i'$ of $t_iH$ would be $t_i^h$ for some $h\in H$, and $\rho$ is invariant under conjugation by an element of $H$.

As we recall below, one can extend $\chi$ to virtual characters. However, we shall refrain from using the notation $\rho^{\otimes C}$ when $\rho$ is not known to be an actual representation, as it can be misleading.
For example, if $\rho \in R^+(H)$, and one writes $(-\rho)^{\otimes C}$ for $\chi(-\rho)_C$, then one is tempted to guess that $\chi(-\rho)_C = \pm \chi(\rho)_C$; while \Cref{normprimenormal} establishes just that when $H$ has odd, prime index in $G$, \Cref{normindex2} shows that it is erroneous in general.

Putting \cref{normischi,veeisind,sumisvee,otimespower} together yields:
\begin{Proposition}\label{additionformula1}
	Let $\sigma, \tau \in R^+(H)$, where $H\trianglelefteq G$. Then:
	\[
		\nn_{H}^{G}(\tau + \sigma) = \sum_{C\in \mathcal{O}(V)}\ind_{\mathrm{Stab} C}^{G}\left(\tau^{\otimes C}\sigma^{\otimes C'}\right)
	\]
	where $C'$ denotes the complement of $C$ in $G/H$, and $\mathcal{O}(V)$ is a complete set of orbit representatives of $V$ under the action of $G$.
\end{Proposition}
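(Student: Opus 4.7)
The plan is to follow the suggestion made in the text just before the statement: combine \eqref{normischi}, \eqref{veeisind}, \eqref{sumisvee}, and \eqref{otimespower}. The only real content beyond substitution is unwinding the $\vee$-product at the particular fiber above the top element $G/H\in V$.

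First, by \eqref{normischi} together with \eqref{sumisvee} (which expresses that $\chi$ intertwines the additive structure of $R^{+}(H)$ with $\vee$), one has
\[
    \nn_{H}^{G}(\tau+\sigma) \;=\; \chi(\tau+\sigma)_{G/H} \;=\; \bigl(\chi(\tau)\vee\chi(\sigma)\bigr)_{G/H}.
\]
Everything then reduces to evaluating $\chi(\tau)\vee\chi(\sigma) = m_{*}\bigl(p_{1}^{*}\chi(\tau)\cdot p_{2}^{*}\chi(\sigma)\bigr)$ at the single point $G/H\in V$.

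Next I would apply the formula \eqref{veeisind} at $C = G/H$, whose stabilizer is all of $G$. The preimage $m^{-1}(G/H)\subset V^{(2)}$ consists of ordered pairs $(C_{1},C_{2})$ with $C_{1}\sqcup C_{2} = G/H$; since $C_{2}$ is determined by $C_{1}$, the projection $(C_{1},C_{2})\mapsto C_{1}$ is a $G$-equivariant bijection $m^{-1}(G/H)\cong V$ with $\st(C_{1},C_{2})=\st(C_{1})$. Hence the sum in \eqref{veeisind} is indexed by $\mathcal{O}(V)$, and the $C$-summand equals $\ind_{\st C}^{G}\bigl(\chi(\tau)_{C}\otimes \chi(\sigma)_{C'}\bigr)$ where $C' = (G/H)\setminus C$. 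Rewriting $\chi(\tau)_{C} = \tau^{\otimes C}$ and $\chi(\sigma)_{C'} = \sigma^{\otimes C'}$ via \eqref{otimespower} delivers the announced formula.

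The main (and essentially only) obstacle is bookkeeping: one must be confident that the fiber of $p_{i}^{*}\chi(\cdot)$ at $(C_{1},C_{2})$ is $\chi(\cdot)_{C_{i}}$ (immediate from the definition of pullbacks of vector bundles, so that the fiberwise product in $K_{G}(V^{(2)})$ becomes the tensor product of fibers), and that the $G$-orbits on $m^{-1}(G/H)\subset V^{(2)}$ are really in bijection with $\mathcal{O}(V)$ rather than with $\mathcal{O}(V^{(2)})$. Both points collapse to the elementary remark that an ordered partition of $G/H$ into two complementary parts is the same datum as a choice of one of them. There is no deeper difficulty: the statement is essentially a direct corollary of the fact, established in Tambara's paper, that $\chi$ is a monoid morphism $(K_{G}^{+}(G/H),+)\to (K_{G}(V),\vee)$.
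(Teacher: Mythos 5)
Your proof is correct and is precisely the argument the paper intends (the paper itself gives no proof beyond "Putting \cref{normischi,veeisind,sumisvee,otimespower} together yields"); you have simply filled in the bookkeeping about $m^{-1}(G/H)\cong V$ and $\st(C_1,C_2)=\st(C_1)$ that the paper leaves implicit.
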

Again, this formula does not depend on the choice of orbit representatives, since choosing different representatives boils down to conjugating $\tau^{\otimes C}\sigma^{\otimes C'}$ by some $g\in G$, under which induction of representations is invariant. Note that, so far, the formula in \Cref{additionformula1} is only valid on $R^+(H)$.

Let us point out that, if $G$ is abelian, then $\rho^{\otimes C} = \rho^{\otimes|C|}$ as an $H$-module; there results a simplified formula for $\nn_H^G(\sigma+\tau)$ in this case, especially when $H$ has prime index in $G$. What we establish in the sequel is that this simplified formula holds \textit{even when $\sigma$ and $\tau$ are virtual}. This will be \Cref{abelianadditionvirtual}.

A key argument of the proof of \cite[Th 6.1]{tambara}, is that the image of $\chi$ lies in a subset of $K_G(V)$ that is a group for $\vee$. Thus, defining $\chi(-\tau)$ as the element $b \in K_G(V)$ such that $\chi(\tau)\vee b = 1$, extends $\chi$ to virtual characters in a way compatible with the addition formula.
With $b$ thus defined, one has $\nn_{H}^{G}(-\tau) = b_{\{G/H\}}$, and the equation $\nn_{H}^{G}(\sigma - \tau) = \chi(\sigma)\vee\chi(-\tau)$ is an explicit formula for the norm of any virtual character. Unfortunately, this formula is quite delicate to apply in practice, as the example below shows.

	\subsection{The prime normal case} We first extend the norm map to negative bundles, which allows us to determine an addition formula in the case where $H$ is a normal subgroup of $G$ of prime index $p$. We denote (slightly abusively) the extention of $\chi$ to negative (and generally, all virtual) characters, by $\chi$ as well. Throughout, we use the fact that a vector bundle above a $G$-set $X$ is entirely determined by its fibre above each point $x$ and the action of $\st x$ on it; any equality of vector spaces above a point $x$ is to be understood as a canonical isomorphism of $\st x$-modules.
We start with the case where $p$ is odd:
\begin{Proposition}\label{normprimenormal}
	Let $H\trianglelefteq G$ with $|G:H| = p$ an odd prime and let $W \in K_G^+(G/H)$. Then for any $C\subseteq G/H$:
	\[
    \chi(-W)_C = (-1)^{|C|}\chi(W)_C.
  \]
  In particular, for $C = G/H$:
  \[
		f_\sharp(-W) = -f_\sharp(W).
	\]
\end{Proposition}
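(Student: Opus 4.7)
The plan is to verify directly that the element $b \in K_G(V)$ defined by $b_C := (-1)^{|C|}\chi(W)_C$ satisfies $\chi(W) \vee b = \chi(0) = 1_{\vee}$; by the uniqueness of the $\vee$-inverse of $\chi(W)$ recalled just above the statement, this forces $b = \chi(-W)$. Note that $b$ is a bona fide element of $K_G(V)$ because $|C|$ is constant on each $G$-orbit of $V = \mathcal{P}(G/H)$, so the sign is $G$-invariant.

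Unpacking the definition $z \vee t = m_*(p_1^*z \cdot p_2^*t)$ together with \cref{veeisind}, for each pair $(C_1, C_2)$ with $C_1 \sqcup C_2 = C$ one has the canonical identification $\chi(W)_{C_1} \otimes \chi(W)_{C_2} = \res^{\st C}_{\st(C_1, C_2)}\chi(W)_C$ coming from $\chi(W)_C = W^{\otimes C}$. The projection formula then gives
\[
	(\chi(W) \vee b)_C = \chi(W)_C \cdot \alpha_C, \qquad \alpha_C := \bigoplus_{[(C_1,C_2)]} (-1)^{|C_2|}\ind_{\st(C_1,C_2)}^{\st C}\KK,
\]
the sum running over $\st C$-orbits of pairs. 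The whole problem thus reduces to showing $\alpha_{\emptyset} = \KK$ (immediate, since only $(\emptyset,\emptyset)$ contributes) and $\alpha_C = 0$ for every nonempty $C \subseteq G/H$.

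This last step is where both the primality and the oddness of $p = [G:H]$ enter. When $C$ is a proper nonempty subset, $H$ acts trivially on $G/H$, so $\st C = H = \st(C_1, C_2)$ for every decomposition; induction is trivial and $\alpha_C$ collapses to $\bigl(\sum_{k=0}^{|C|}(-1)^k\binom{|C|}{k}\bigr)\KK_H = 0$. For $C = G/H$, the action of $\st C = G$ factors through $G/H \cong C_p$, which acts freely on $k$-subsets for $1 \leq k \leq p-1$ (orbits of size $p$, stabilizer $H$), while the two pairs $(G/H, \emptyset)$ and $(\emptyset, G/H)$ are $G$-fixed. The two singleton orbits contribute $(1 + (-1)^p)\KK_G$, which vanishes because $p$ is odd, and the remaining terms assemble, via $\ind_H^G\KK = \KK[G/H]$, into a coefficient $\tfrac{1}{p}\sum_{k=1}^{p-1}(-1)^k\binom{p}{k}$, also zero by $\sum_{k=0}^{p}(-1)^k\binom{p}{k} = 0$ together with the oddness of $p$. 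The final assertion $f_\sharp(-W) = -f_\sharp(W)$ follows by evaluating at $C = G/H$, using $(-1)^p = -1$. The one delicate step is enumerating the $G$-orbits on pairs when $C = G/H$, but this is entirely controlled by the observation that the $G$-action on $G/H$ factors through the simple cyclic group $C_p$.
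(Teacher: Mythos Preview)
Your proof is correct and follows essentially the same approach as the paper's: both hinge on the identification $W^{\otimes C_1}\otimes W^{\otimes C_2}\cong \res^{\st C}_{\st(C_1,C_2)} W^{\otimes C}$ together with binomial cancellation, with the oddness of $p$ entering only at $C=G/H$. The paper organizes this as an induction on $|C|$, solving for $b_C$ at each stage, whereas you postulate $b_C=(-1)^{|C|}\chi(W)_C$ at the outset and verify $\chi(W)\vee b=1$ directly, using the projection formula to reduce everything to the scalar $\alpha_C$; this is a slightly cleaner packaging of the same computation.
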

\begin{proof}
Let $a = \chi(W) \in K_G(V)$. Let $b \in K_G(V)$ satisfy $a\vee b = 1$, that is, $(a\vee b)_C = 0$ for any $C\neq \emptyset$. We proceed by induction on the cardinality of $C$.
Note that, because $H$ is normal of prime index in $G$, for any $C \subsetneq G/H$ we have $\st(C) = H$; thus, the vector space $\chi(W)_C := \otimes_{c\in C} W_c$, is an $H$-module (but not a $G$-module). When $C = G/H$, the vector space $\chi(W)_C$ is a $G$-module. We first treat the case $C \neq G/H$.

Because it is normal in $G$, the subgroup $H$ stabilizes each $c\in C$ individually, and thus for any $C_1\sqcup C_2 = C$ we have $a_{C_1}\otimes a_{C_2} \cong a_C$ as $H$-modules. To declutter notation, we write eg. $a_{xy}$ for $a_{\{x,y\}}$.
Let us explicitly state the first few steps of the induction, assuming in each case that $C\subsetneq G/H$. Throughout, we use \Cref{veeisind} repeatedly:
\begin{itemize}
	\item If $C = \emptyset$ then $(a\vee b)_C = 1$.
	\item If $|C| = 1$ and (say) $C = \{x\}$ then:
  \[
    (a\vee b)_C = \ind_{\st x}^H(a_x\otimes 1) + \ind_{\st x}^H(1\otimes b_x) = \ind_H^H(a_x)+\ind_H^H(b_x)
  \] and so $b_x = -a_x$ as $H$-modules.
	\item If $|C| = 2$, say $C= \{x,y\}$, then (omitting tensor product signs for simplicity of notation):
	\begin{align*}
		(a\vee b)_C &= \ind_H^H(a_{xy})+ \ind_H^H(a_xb_y) + \ind_H^H(a_yb_x) + \ind_H^H(b_{xy}) \\
		 	&= b_{xy} + a_{xy} -a_xa_y - a_ya_x,
	\end{align*}
	 Since $a_{xy} \cong a_xa_y$ as $H$-modules, we have
	\[
		b_{xy} = a_xa_y = a_{xy}.
	\]
	\item If $|C| = 3$, say $C= \{x,y,z\}$, then $a_{xyz} = a_{xy}a_z = a_xa_{yz}$ as $H$-modules, and:
	\begin{align*}
		b_{xyz} =& \ind_H^H(-a_{xyz}) + \ind_H^H(a_xa_{yz}) + \ind_H^H(a_ya_{xz}) + \ind_H^H(a_za_{xy}) \\
    &+ \ind_H^H(a_{xy}a_z) + \ind_H^H(-a_{xz}a_y) + \ind_H^H(- a_{yz}a_x)\\
		=&-a_{xyz}.
	\end{align*}
	\item Suppose $b_C = (-1)^{|C|}a_C$ for $|C|<n$ and take $|C| = n<|G:H|$. Then
	\begin{align*}
		(a\vee b)_C &= \sum_{D\subset C}\ind_{\st(C\setminus D, D)}^{\st C}(a_{C\setminus D}b_D) = b_C + \sum_{D\subsetneq C} \ind_H^H((-1)^{|D|}a_{C\setminus D}a_{D}) \\
			&=b_C + \sum_{i = 0}^{n-1}(-1)^i\binom{n}{i}a_C \\
			&= b_C +(-1)^{n+1}a_C,
	\end{align*}
	and thus $b_C = (-1)^na_C$.
  \end{itemize}
  The induction is complete.

  We can now treat the case $C = G/H$. Then $\st(C) = G$, and $a_D\otimes a_{C\setminus D}$ is not a $G$-module for any $D\subsetneq C$. Let $\mathcal{O}(V)$ be a set of orbit representatives for the action of $G$ on $V$, then, as $G$-modules, we have:
	\begin{align*}
		(a\vee b)_C &= a_C + b_C +\sum_{D \in \mathcal{O}(V)}\textrm{Ind}_{H}^{G} \left( (-1)^{|D|}a_D\otimes a_{C\setminus D}\right) \\
			&= a_C + b_C + \textrm{Ind}_H^G\left(\sum_{D\in \mathcal{O}(V)}(-1)^{|D|}a_D\otimes a_{C\setminus D} \right).
	\end{align*}
	One can pair the summands $(-1)^{|D|}(a_D\otimes a_{C\setminus D})$ and $(-1)^{|C\setminus D|}(a_{C\setminus D}\otimes a_D)$, which are isomorphic and of opposite sign (since $p$ is odd). Hence the terms of the sum cancel and we have $b_C = -a_C$, that is
	\[
		f_\sharp(-W) = -f_\sharp(W).
	\]
\end{proof}

Note that the first step of the proof shows:
\begin{Proposition}\label{normindex2}
	Let $H \leq G$ be a subgroup of index $2$ and $W\in K_G^+(G/H)$. Let $t$ be a representative for the non-trivial coset in $G/H$. Then, in $K_G(\bullet)$,
	\[
		f_\sharp(-W) = -f_\sharp(W) + f_*(W\otimes W_t)
	\]
\end{Proposition}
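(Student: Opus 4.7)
The plan is to mimic the setup of Proposition \ref{normprimenormal}: write $a = \chi(W)$ and let $b = \chi(-W) \in K_G(V)$ be the unique element satisfying $a \vee b = 1$, i.e.\ $(a \vee b)_C = 0$ for every nonempty $C \subseteq G/H$. Since $|G/H| = 2$, there are only two regimes to treat: singletons and the whole set $G/H$. By definition $f_\sharp(-W) = b_{G/H}$, so the computation of $b_{G/H}$ is the object of the proof.

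For a singleton $C = \{x\}$ with $x \in G/H$, the stabilizer is $\st C = H$, and the only two ordered partitions of $C$ are $(\emptyset,\{x\})$ and $(\{x\},\emptyset)$, each with stabilizer $H$. The argument is verbatim the first inductive step of the proof of Proposition \ref{normprimenormal}: the equation $(a \vee b)_{\{x\}} = a_x + b_x = 0$ gives $b_x = -a_x$.

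For $C = G/H = \{H, tH\}$, the stabilizer jumps to $\st C = G$, and this is where the new phenomenon appears. The ordered partitions $C_1 \sqcup C_2 = C$ split into exactly three $G$-orbits: $(\emptyset,C)$ and $(C,\emptyset)$ (each fixed by $G$), and the orbit $\{(\{H\},\{tH\}),(\{tH\},\{H\})\}$ swapped by $t$, whose pointwise stabilizer is $H$. Applying the definition of $\vee$ yields
\[
0 = (a\vee b)_{G/H} = a_{G/H} + b_{G/H} + \ind_H^G(a_H \otimes b_{tH}),
\]
and substituting $b_{tH} = -a_{tH}$ from the singleton case gives
\[
b_{G/H} = -a_{G/H} + \ind_H^G(a_H \otimes a_{tH}).
\]
To conclude, I would invoke equation \eqref{normischi} to identify $a_{G/H} = f_\sharp(W)$ and $b_{G/H} = f_\sharp(-W)$, and use the basepoint isomorphism $K_G(G/H) \cong R(H)$ at $H$ to read $W \otimes W_t$ as the $H$-module $W_H \otimes W_{tH} = a_H \otimes a_{tH}$, whence $f_*(W \otimes W_t) = \ind_H^G(a_H \otimes a_{tH})$.

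There is no real obstacle here beyond identifying the correct orbit data; the point worth stressing is \emph{why} the formula differs from Proposition \ref{normprimenormal}. In the odd-index proof the cross terms cancel by pairing $(-1)^{|D|} a_D \otimes a_{C\setminus D}$ against $(-1)^{|C\setminus D|} a_{C\setminus D} \otimes a_D$, using the opposite parity of $|D|$ and $|C\setminus D|$. When $|G:H| = 2$ and $C = G/H$, one has $|D| = |C\setminus D| = 1$, both odd, so the two terms have the same sign and combine rather than cancel, producing the additive correction $f_*(W \otimes W_t)$.
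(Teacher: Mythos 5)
Your proof is correct and follows essentially the same approach as the paper's: both proofs identify the three $G$-orbit representatives of ordered pairs $(C_1,C_2)$ with $C_1\sqcup C_2 = G/H$, namely $(\emptyset, G/H)$, $(G/H, \emptyset)$, and the swapped pair $(\{H\},\{tH\})$ with stabilizer $H$, then apply the definition of $\vee$ and substitute $b_t = -a_t$ from the singleton step. Your closing paragraph explaining the failure of parity cancellation at index $2$ is a nice piece of commentary that the paper omits, but it does not change the argument.
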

\begin{proof}
	In this case, the orbit representatives of ordered pairs of disjoint sets of cosets of $H$ in $G$ are $(\{1\}, \{t\})$, $(\{1,t\}, \emptyset)$ and $(\emptyset, \{1,t\})$. Using notation as in the above proof of \Cref{normprimenormal}, we have
	\[
			(a\vee b)_{G/H} = \ind_{\st (\{1,t\},\emptyset)}^G(a_{1,t}) + \ind_{\st (\{1\},\{t\})}^G(a_1\otimes b_t) + \ind_{\st (\emptyset,\{1,t\})}^G b_{1,t}.
	\]
	Since $\st(\{1,t\}, \emptyset) = \st(\emptyset, \{1,t\}) = G$ and $\st (\{1\},\{t\}) = H$, this becomes
	\[
		b_{1,t} = -a_{1,t} + \ind_H^G(a_1\otimes a_t).
	\]
\end{proof}
The above yields a formula for differences of characters, as follows:
\begin{Corollary} \label{subtractionformulanormalprime}
	Let $\rho,\sigma \in R^+(H)$ for any finite group $H$, and suppose $H\triangleleft G $ with $|G:H| = p$ prime. Then
	\[
	 \nn_H^G(\sigma-\tau) = \begin{cases} \nn_H^G(\sigma) - \nn_H^G(\tau) + \sum_{C\in    \mathcal{O}(V)}(-1)^{p-|C|} \ind_H^G(\sigma^{\otimes C}\tau^{\otimes C'}) & \text{ if } p \text{ is odd} \\
      \nn_H^G(\sigma) - \nn_H^G(\tau) + \ind_H^G(\tau \otimes \tau^t) - \ind_H^G(\sigma \otimes\tau^t) & \text{ if } p=2, \end{cases}
  \]
  where $t$ is a representative for the non-trivial coset in $G/H$ when $p=2$.
\end{Corollary}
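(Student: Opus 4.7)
Proof proposal. The plan is to derive both formulas from the addition formula of \Cref{additionformula1}, extended to virtual characters via the monoid morphism $\chi$ and the $\vee$-operation. The starting point is
\[
  \nn_H^G(\sigma - \tau) = \bigl(\chi(\sigma) \vee \chi(-\tau)\bigr)_{G/H},
\]
which follows from the Tambara-extended fact that $\chi$ takes sums in $R(H)$ to $\vee$-products. I would then expand the right-hand side using the definition of $\vee$, producing a sum over orbit representatives of ordered pairs $(C_1, C_2)$ of disjoint subsets with $C_1 \sqcup C_2 = G/H$, and group the terms by the stabilizer $\st(C_1, C_2)$.

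Since $H$ is normal of prime index, the $G$-action on $G/H$ factors through the regular action of $G/H \cong \ZZ/p$, which is transitive and admits no proper non-empty invariant subsets. The two boundary orbits $(G/H, \emptyset)$ and $(\emptyset, G/H)$ are thus the only ones with stabilizer $G$: the first contributes $\chi(\sigma)_{G/H} = \nn_H^G(\sigma)$, and the second contributes $\chi(-\tau)_{G/H}$. This second value is supplied by \Cref{normprimenormal} when $p$ is odd (giving $-\nn_H^G(\tau)$) and by \Cref{normindex2} when $p = 2$ (giving $-\nn_H^G(\tau) + \ind_H^G(\tau \otimes \tau^t)$). For every other orbit, $\st(C_1, C_2) = H$, and the inductive step in the proof of \Cref{normprimenormal} (which uses no parity assumption when $C \subsetneq G/H$) yields $\chi(-\tau)_{C_2} = (-1)^{|C_2|}\tau^{\otimes C_2}$, so the orbit's contribution is $(-1)^{p - |C_1|}\ind_H^G(\sigma^{\otimes C_1}\tau^{\otimes C_2})$.

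The last step is to identify orbit representatives: since $\ZZ/p$ acts freely on the set of ordered pairs of proper non-empty complementary subsets of $G/H$, these orbits are in bijection with the proper non-empty classes in $\mathcal{O}(V)$ via $(C, C') \mapsto C$. For $p$ odd this reassembles the sum in the statement; for $p = 2$ the unique non-boundary orbit has representative $(\{H\}, \{tH\})$ and contributes $-\ind_H^G(\sigma \otimes \tau^t)$, which combines with the $+\ind_H^G(\tau \otimes \tau^t)$ from \Cref{normindex2} to produce the stated formula. The main bookkeeping subtlety will be the asymmetric treatment of $\chi(-\tau)_{G/H}$ when $p = 2$, where the extra $\ind_H^G$-term supplied by \Cref{normindex2} is essential; beyond that, everything is a careful tabulation of orbits, stabilizers, and signs.
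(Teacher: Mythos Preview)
Your proposal is correct and follows essentially the same route as the paper: expand $\nn_H^G(\sigma-\tau) = (\chi(\sigma)\vee\chi(-\tau))_{G/H}$, separate the two boundary orbits $(G/H,\emptyset)$ and $(\emptyset,G/H)$, and feed in the values of $\chi(-\tau)_C$ supplied by \Cref{normprimenormal} (for all $C$ when $p$ is odd) and \Cref{normindex2} (for $C=G/H$ when $p=2$). Your orbit bookkeeping is spelled out in more detail than the paper's terse computation, but there is no substantive difference in approach.
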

\begin{proof}
	When $p$ is odd, by \Cref{normprimenormal}, we have $\chi(-\tau)_C = (-1)^{|C|}\chi(\tau)_C$ for $C\subseteq G/H$. Thus
	\begin{align*}
		\nn_H^G(\sigma-\tau) &= (\chi(\sigma)\vee\chi(-\tau))_{G/H} \\
			&= \nn_H^G(\sigma) - \nn_H^G(\tau) + \mathrm{Ind}_H^G\left(\sum_{C\in\mathcal{O}(V)}(-1)^{p-|C|}\chi(\sigma)_C\otimes \chi(\tau)_{C'} \right)\\
			&= \nn_H^G(\sigma)-  \nn_H^G(\tau) + \sum_{C\in \mathcal{O}(V)} (-1)^{p-|C|}\mathrm{Ind}_H^G(\sigma^{\otimes C}\tau^{\otimes C'}).
	\end{align*}
  If $p =2$:
  \begin{align*}
    \nn_H^G(\sigma-\tau) &= (\chi(\sigma)\vee\chi(-\tau))_{G/H} \\
      &= \nn_H^G(\sigma) - \nn_H^G(\tau) + \ind_H^G(\tau\otimes\tau^t) -\ind_H^G(\sigma\otimes\tau^t).
  \end{align*}
\end{proof}

  \subsection{The abelian case}\label{normabelian}
The next step in our derivation is to simplify expressions of the type $x^{\otimes C}$, which are \textit{a priori} defined in the case of actual characters but not for virtual ones. In the abelian case however, the group action of $G$ on $H$ is trivial, and the notation $x^{\otimes C}$ can be extended to cover any virtual character $x$.

\begin{Proposition}\label{chivirtualabelian}
  Let $H\leq G$ be abelian groups with $|G:H| = p$ a prime number. Let $x \in R(H)$ be any virtual character, then for any $C \subsetneq G/H$:
  \[
    \chi\left(x\right)_C = \bigotimes_{t \in C}x^t = x^{|C|} \text{ as (virtual) }H\text{-modules}.
  \]
\end{Proposition}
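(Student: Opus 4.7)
The plan is to extend the formula $\chi(x)_C = x^{|C|}$ from actual representations to virtual characters by exploiting the fact that $\chi$ extends to a group homomorphism $(R(H),+) \to (K_G(V), \vee)$, combined with an induction on $|C|$ using the binomial theorem.

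First I would collect the abelian simplifications of the $\vee$-operation. Since $G$ is abelian, conjugation of $H$ is trivial, so $x^t = x$ for any $t \in G$ and any virtual $x \in R(H)$; this reduces the statement to $\chi(x)_C = x^{|C|}$. Still because $G$ is abelian, $H$ acts trivially on $G/H$ (and hence on $V$), so for every $C \subsetneq G/H$ we have $\st(C) = H$, and for every decomposition $C = C_1 \sqcup C_2$ we have $\st(C_1, C_2) = H$. Using \Cref{veeisind} (or rather its unfolding through the definition of $\vee$), the $C$-component of $z \vee t$ then reduces to the untwisted sum
\[
  (z \vee t)_C \;=\; \sum_{C_1 \sqcup C_2 = C} z_{C_1} \otimes t_{C_2}
\]
as virtual $H$-modules, since all the induction maps $\ind_H^H$ appearing in the general formula are the identity. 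This is the only ``geometric'' ingredient needed.

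Next, writing $x = \sigma - \tau$ with $\sigma, \tau \in R^+(H)$, I would invoke the group-homomorphism extension of $\chi$ recalled in the paragraph preceding the proposition (the fact that $\chi$ lands in a $\vee$-subgroup of $K_G(V)$, from the proof of Tambara's Theorem 6.1). This gives the identity $\chi(x) \vee \chi(\tau) = \chi(\sigma)$ in $K_G(V)$. For the positive characters $\sigma, \tau$, and any $D \subseteq C \subsetneq G/H$, the already-established formula yields $\chi(\sigma)_C = \sigma^{|C|}$ and $\chi(\tau)_{C \setminus D} = \tau^{|C| - |D|}$. Combining with the $\vee$-formula from the previous paragraph, I obtain
\[
  \sigma^{|C|} \;=\; \sum_{D \subseteq C} \chi(x)_D \cdot \tau^{|C| - |D|}.
\]

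Finally I would induct on $|C|$. The base case $|C| = 0$ is immediate since $\chi(x)_\emptyset = 1 = x^0$. For the inductive step, the displayed identity rearranges to
\[
  \chi(x)_C \;=\; \sigma^{|C|} \;-\; \sum_{D \subsetneq C} \chi(x)_D \cdot \tau^{|C| - |D|},
\]
and the inductive hypothesis $\chi(x)_D = x^{|D|}$ for $|D| < |C|$, together with $\sigma = x + \tau$ and the binomial theorem
\[
  \sigma^{|C|} \;=\; (x + \tau)^{|C|} \;=\; \sum_{k=0}^{|C|} \binom{|C|}{k} x^k \tau^{|C|-k},
\]
collapses the right-hand side to $x^{|C|}$. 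The only potentially subtle point is the appeal to the $\vee$-group extension of $\chi$; once that is in hand, the argument is a clean recursion, and the abelianness of $G$ is exactly what is needed to eliminate all conjugation twists and induction functors that would otherwise obstruct the inductive formula.
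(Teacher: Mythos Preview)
Your proof is correct. The paper takes a slightly different but closely related route: it writes $x = x^+ - x^-$ and then computes $\chi(x)_C = (\chi(x^+)\vee\chi(-x^-))_C$ directly, feeding in the formula $\chi(-x^-)_{D'} = (-1)^{|D'|}(x^-)^{|D'|}$ already obtained in \Cref{normprimenormal}, so that the binomial expansion of $(x^+ - x^-)^{|C|}$ appears in a single line with no induction. Your argument instead uses the identity $\chi(x)\vee\chi(\tau) = \chi(\sigma)$ and solves for $\chi(x)_C$ recursively via Möbius-style inversion and the binomial theorem. The trade-off is clear: the paper's computation is shorter because it cashes in the earlier work of \Cref{normprimenormal} (and must treat $p=2$ separately, since that result only gives the sign formula for odd $p$), while your approach is self-contained and uniform in $p$, at the cost of a short induction on $|C|$. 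Both rely on exactly the same abelian simplification of the $\vee$-formula that you spelled out.
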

Recall that the morphism $\chi$ was originally only defined for actual characters, then extended to negative characters. \Cref{chivirtualabelian} says that in the abelian prime case, the naive extension of $\chi$ to all virtual characters is the right one.
\begin{proof}
  First note that the statement is trivial when $p=2$. For $p$ odd, write $x = x^+ - x^-$ with $x^+,x^- \in R^+(H)$. Let $C\subset G/H$, and for any $D\subset C$ let $D'$ be the complement of $D$ in $C$. Recall that by \Cref{normprimenormal}, for any $\rho\in R^+(H)$, we have $\chi(-\rho)_C = (-1)^{|C|}\chi(\rho)_C$. Thus:
  \begin{align*}
    \chi(x)_C &= (\chi(x^+)\vee\chi(-x^-))_C \\
      &= \bigoplus_{D \subset C}\chi(x^+)_D \otimes \chi(-x^-)_{D'}\\
      &= \bigoplus_{D \subset C}(x^+)^{|C|}\otimes (-1)^{|D'|}(x^-)^{|D'|}\\
      &= \bigoplus_{i = 0}^{|C|}\binom{p}{i}(x^+)^i(-x^-)^{|C|-i} \\
      &= x^{|C|}
  \end{align*}
  which proves the statement.
\end{proof}
Thus we can extend the addition formula to all virtual characters:
\begin{Lemma} \label{abelianadditionvirtual}
  Let $x, y \in R(H)$ be virtual characters and suppose $|G:H| = p$ is prime. Then:
  \[
    \nn_H^G(x+y) = \nn_H^G(x) + \nn_H^G(y) + \sum_{i = 1}^{p-1}\ind_H^G(x^{i}y^{p-i}).
  \]
\end{Lemma}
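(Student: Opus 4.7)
The plan is to reduce the identity to an evaluation of $(\chi(x)\vee\chi(y))_{G/H}$, using first that $\chi$ is a monoid map from $(R(H),+)$ to $(K_G(V),\vee)$ — an identity which extends to virtual characters by Tambara's Grothendieck construction recalled just above the statement — and then to unwind the right-hand side via the definition of $\vee$ together with the explicit description of each fibre of $\chi$ supplied by \Cref{chivirtualabelian}.

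Concretely, I would begin by writing
\[
  \nn_H^G(x+y) \;=\; \chi(x+y)_{G/H} \;=\; \bigl(\chi(x)\vee\chi(y)\bigr)_{G/H},
\]
and then expand using the definition of $\vee$, which is built from the additive operations $p_i^*, m_*$ and is therefore well-defined on the full group $K_G(V)$:
\[
  \bigl(\chi(x)\vee\chi(y)\bigr)_{G/H} \;=\; \bigoplus_{(C_1,C_2)} \ind_{\st(C_1,C_2)}^{G}\!\bigl(\chi(x)_{C_1}\otimes\chi(y)_{C_2}\bigr),
\]
the sum running over $G$-orbit representatives of ordered partitions of $G/H$ into two disjoint subsets $C_1 \sqcup C_2 = G/H$.

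The two boundary partitions $(\emptyset,G/H)$ and $(G/H,\emptyset)$ are $G$-fixed; since $\chi(-)_{G/H}=\nn_H^G(-)$ by definition, they contribute $\nn_H^G(y)$ and $\nn_H^G(x)$ respectively. For every other pair, both $C_1$ and $C_2$ are proper non-empty subsets of $G/H\cong C_p$; the quotient $G/H$ acts freely on such subsets, so $\st(C_1,C_2)=H$, and by \Cref{chivirtualabelian} we have $\chi(x)_{C_1}=x^{|C_1|}$ and $\chi(y)_{C_2}=y^{|C_2|}$ as $H$-modules, \emph{even when} $x$ and $y$ are virtual. Each such pair with $|C_1|=i$ therefore contributes a term of the form $\ind_H^G(x^{i}y^{p-i})$, and gathering the contributions for $i=1,\ldots,p-1$ yields the claimed sum.

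The only genuine subtlety in this argument is the appeal to \Cref{chivirtualabelian}: without knowing that the description $\chi(x)_C = x^{|C|}$ persists for a \emph{virtual} $x$, the symbol $x^{\otimes C}$ appearing in \Cref{additionformula1} would not make sense here. It is precisely at this point that the hypotheses $G$ abelian and $[G:H]$ prime are used, via the sign formula $\chi(-W)_C=(-1)^{|C|}\chi(W)_C$ of \Cref{normprimenormal}. Once that is in hand, everything else — the morphism property $\chi(x+y) = \chi(x)\vee\chi(y)$, the additivity of $\vee$, and the orbit bookkeeping on $G/H \cong C_p$ — is formal.
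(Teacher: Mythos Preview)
Your proposal is correct and follows essentially the same route as the paper's proof: write $\nn_H^G(x+y)=(\chi(x)\vee\chi(y))_{G/H}$, separate off the two extreme partitions to obtain $\nn_H^G(x)$ and $\nn_H^G(y)$, and invoke \Cref{chivirtualabelian} on the remaining proper-subset terms to replace $\chi(x)_C$ by $x^{|C|}$ before reindexing by $i=|C|$. Your write-up is more explicit about the stabilisers and the orbit bookkeeping on $G/H\cong C_p$, but the argument is identical.
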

\begin{proof}
  Recall that $\nn_H^G(x+y) = (\chi(x+y))_{G/H}$. So:
  \begin{align*}
    \nn_H^G(x+y)&= (\chi(x)\vee\chi(y))_{G/H} \\
      &= \nn_H^G(x) + \nn_H^G(y) + \sum_{C \in \mathcal{O}(V)}\ind_H^G(x^{|C|}y^{|C'|}) \\
      &= \nn_H^G(x) + \nn_H^G(y) + \sum_{i=1}^{p-1}\ind_H^G(x^i y^{p-i}).
  \end{align*}
\end{proof}
Note that this formula is also valid for $p = 2$.
\begin{Lemma}\label{normabeliandim1}
	Let $H\leq G$ be finite abelian groups with $[G:H] = n$, and let $\rho \in R^+(H)$ be a representation of degree 1. If $\overline{\rho} \in R^+(G)$ satisfies $\rho = \res_H^G(\overline{\rho})$, then:
	\[
		\nn_H^G(\rho) = \overline{\rho}^n.
	\]
\end{Lemma}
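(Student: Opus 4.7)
The plan is to prove the formula by direct computation, using the classical transversal-based formula for the tensor induction of a one-dimensional character. First, fix a transversal $T = \{t_1, \ldots, t_n\}$ for $H$ in $G$. For each $g \in G$, left multiplication by $g$ permutes the cosets $t_iH$, so there is a unique permutation $\sigma_g$ of $\{1, \ldots, n\}$ and unique elements $h_i(g) = t_{\sigma_g(i)}^{-1} g t_i \in H$ satisfying $g t_i = t_{\sigma_g(i)} h_i(g)$.

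Next, I would invoke the standard formula for the tensor induction of a one-dimensional character (see \cite[\S 13A]{curtis}, coherent with the definition of $f_\sharp$ recalled in \Cref{definitions} applied to $f: G/H \to \{*\}$): the character $\nn_H^G(\rho)$ is itself one-dimensional, and is given by $\nn_H^G(\rho)(g) = \prod_{i=1}^n \rho(h_i(g))$.

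The key step, where abelianity of $G$ is used, is to telescope the product of the $h_i(g)$:
\[
  \prod_{i=1}^n h_i(g) \;=\; \prod_{i=1}^n t_{\sigma_g(i)}^{-1} g\, t_i \;=\; g^n \prod_{i=1}^n t_i\, t_{\sigma_g(i)}^{-1} \;=\; g^n,
\]
the last equality holding because $\sigma_g$ is a permutation of $\{1, \ldots, n\}$. Since $[G:H] = n$ forces $g^n \in H$, and $\rho = \res_H^G(\bar\rho)$, we conclude
\[
  \nn_H^G(\rho)(g) \;=\; \rho\!\left(\prod_{i=1}^n h_i(g)\right) \;=\; \rho(g^n) \;=\; \bar\rho(g^n) \;=\; \bar\rho(g)^n,
\]
as desired.

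I do not anticipate any significant obstacle: the argument is a direct bookkeeping calculation once one commits to the transversal description of tensor induction. One could alternatively proceed by induction on $[G:H]$, factoring any chain $H \leq K \leq G$ with $[K:H]$ prime and using transitivity of the norm (a consequence of \Cref{kgtambara}) together with \Cref{abelianadditionvirtual}; but the direct computation above is more economical.
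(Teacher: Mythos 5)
Your proof is correct and follows essentially the same route as the paper: both evaluate the one-dimensional tensor-induced character pointwise and reduce to the identity $\nn_H^G(\rho)(g) = \rho(g^n)$, using that $G$ abelian forces $g^n \in H$ and all conjugates of $\rho$ to coincide. Your telescoping product $\prod_i h_i(g) = g^n$ via the transversal makes explicit a step the paper simply asserts, so the write-up is if anything a bit tighter than the original.
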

\begin{proof}
	Recall that $\nn_H^G(\rho) = \rho^{\otimes G/H}$, viewed as a representation of $G$. Since $H,G$ are abelian groups, we have $\rho^g = \rho$ for any $g\in G$, so that $\nn_H^G(\rho)$ is given by $\rho^n$ on $H$. If $g\notin H$, then since $|G:H| = n$ we have $g^n \in H$ and $\nn_H^G(\rho)(g) = \rho(g^n)$. Here we use, crucially, the fact that $\rho$ has dimension $1$.
	Thus $\nn_H^G\rho$ is determined by its values on $H$, and if $\res_H^G(\overline{\rho}) = \rho$ then $\overline{\rho}^n = \nn_H^G(\rho)$.
\end{proof}
Let us now restrict to the case $\KK = \CC$. Then the irreducible characters of $G$ are one-dimensional and we can apply the above result.
\begin{Corollary}\label{abelianadditionformula}
	Let $H\leq G$ be abelian groups and $|G:H| = p$ be a prime. Let $\sigma, \tau \in R^+_\CC(H)$ be one-dimensional representations, and let $\bar{\sigma}$ (resp. $\bar{\tau}$) satisfy $\res_H^G\bar{\sigma} = \sigma$ (resp. $\res_H^G\bar{\tau} = \tau$). Then, if $p$ is odd:
	\begin{align*}
		\nn_H^G(\sigma+\tau) &= \bar{\sigma}^p + \bar{\tau}^p +\CC[G/H] \sum_{i = 1}^{p-1}\frac{1}{p} \binom{p}{i} \bar{\sigma}^i\bar{\tau}^{p-i}, \\
		 \nn_H^G(\sigma-\tau) &= \bar{\sigma}^p - \bar{\tau}^p +\CC[G/H] \sum_{i = 1}^{p-1}\frac{1}{p} \binom{p}{i}(-1)^{p-i} \bar{\sigma}^i\bar{\tau}^{p-i}.
	\end{align*}
  If $p=2$:
  \begin{align*}
		\nn_H^G(\sigma+\tau) &= \bar{\sigma}^2 + \bar{\tau}^2 -\CC[G/H] \bar{\sigma}\bar{\tau}, \\
		 \nn_H^G(\sigma-\tau) &= \bar{\sigma}^2 - \bar{\tau}^2 +\CC[G/H]\bar{\tau}^2 -\CC[G/H]\bar{\sigma}\bar{\tau}.
  \end{align*}
\end{Corollary}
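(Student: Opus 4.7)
The plan is to assemble the formula from three ingredients already established in the paper: the addition formula of \Cref{abelianadditionvirtual}, the one-dimensional norm computation of \Cref{normabeliandim1}, and the projection (Frobenius) formula. The starting point is \Cref{abelianadditionvirtual} applied to $\sigma$ and $\tau$ (or $\sigma$ and $-\tau$ for the subtraction case). Because $H$ is normal of prime index in $G$, each nonempty proper subset $C \subset G/H$ has stabilizer exactly $H$, so the $\binom{p}{i}$ subsets of size $i$ split into $\binom{p}{i}/p$ orbits under $G$, each contributing one copy of $\ind_H^G(\sigma^i\tau^{p-i})$; this is the origin of the coefficient $\frac{1}{p}\binom{p}{i}$ appearing in the statement.

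Next I apply \Cref{normabeliandim1} to the one-dimensional characters $\sigma, \tau$, obtaining $\nn_H^G(\sigma) = \bar\sigma^{\,p}$ and $\nn_H^G(\tau) = \bar\tau^{\,p}$. For each mixed term I observe that $\sigma^i\tau^{p-i} = \res_H^G(\bar\sigma^i\bar\tau^{\,p-i})$, and the projection formula then rewrites $\ind_H^G(\sigma^i\tau^{p-i}) = \ind_H^G(1)\cdot\bar\sigma^i\bar\tau^{\,p-i} = \CC[G/H]\cdot\bar\sigma^i\bar\tau^{\,p-i}$. Substituting these into the addition formula immediately yields the claimed expression for $\nn_H^G(\sigma+\tau)$ when $p$ is odd.

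For the subtraction case with $p$ odd, I run the same machinery on $\sigma + (-\tau)$: by \Cref{chivirtualabelian} the terms $\chi(-\tau)_C$ coincide with $(-\tau)^{|C|}$, so each mixed summand picks up a factor $(-1)^{p-i}$, while \Cref{normprimenormal} gives $\nn_H^G(-\tau) = -\bar\tau^{\,p}$; combining these produces the stated formula for $\nn_H^G(\sigma-\tau)$. In the case $p=2$, the only nonempty proper orbit of subsets has size $1$ and contributes a single cross term $\ind_H^G(\sigma\tau) = \CC[G/H]\bar\sigma\bar\tau$, which gives the $p=2$ addition formula. For the subtraction at $p=2$, \Cref{normprimenormal} no longer applies; instead I invoke \Cref{normindex2}, which in the abelian setting (where $\tau^t = \tau$) specialises to $\nn_H^G(-\tau) = -\bar\tau^{\,2} + \CC[G/H]\bar\tau^{\,2}$. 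Feeding this into \Cref{abelianadditionvirtual} with $y = -\tau$ produces the extra $+\CC[G/H]\bar\tau^{\,2}$ correction appearing in the stated formula.

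The only real subtlety is the qualitatively different behaviour at $p=2$, where the tensor induction fails to be odd under negation; apart from careful sign-tracking, the odd-prime case is a direct substitution, while the $p=2$ subtraction forces a swap of \Cref{normprimenormal} for \Cref{normindex2} and a careful bookkeeping of the surviving induction summand.
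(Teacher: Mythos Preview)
Your argument is correct and follows essentially the same route as the paper: orbit-count the proper subsets of $G/H$ to obtain the $\frac{1}{p}\binom{p}{i}$ multiplicities, use \Cref{normabeliandim1} for the two extreme terms, and convert each $\ind_H^G$ to multiplication by $\CC[G/H]$ via the projection formula. Your treatment is in fact more explicit than the paper's on two points: you spell out the projection-formula step, and you handle the $p=2$ subtraction case separately via \Cref{normindex2} (the paper's proof sketch does not address this case directly, leaving it implicit in \Cref{subtractionformulanormalprime}). One small redundancy: once you cite \Cref{abelianadditionvirtual}, the orbit-counting paragraph is really re-deriving that lemma's proof rather than applying its statement; it would be cleaner either to start from \Cref{additionformula1} directly (as the paper effectively does) or to take \Cref{abelianadditionvirtual} as given and skip the orbit count.
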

\begin{proof}
	If $G$ is abelian then the action of $G$ on $H$ is trivial and $\sigma^{\otimes C} = \sigma^{|C|}$ as $\st C$-modules for all $C\subset G/H$, trivially when $C$ is proper, and by \Cref{normabeliandim1} otherwise. The number of subsets $C$ of $G/H$ of size $i$ is $\binom{p}{i}$ which we divide by $p$ to sum over orbits.
	Moreover, the induced representation $\ind_H^G(\rho)$ is $\CC[G/H]\bar{\rho}$, for any $\bar{\rho}$ such that $\res_H^G\bar{\rho} = \rho$.
\end{proof}

\subsection{Norm and Grothendieck filtration}
We can now show our main theorem:
\begin{Theorem}\label{normingamma}
	Let $H\leq G$ be abelian with $[G:H] = p$ a prime number. If $x\in \Gamma^n(H)$ then $\nn_H^G(x) \in \Gamma^{np}(G)$.
\end{Theorem}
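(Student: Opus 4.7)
The plan is to exploit the explicit addition formulas of Section~\ref{additionformula} together with two special features of the abelian setting: (i) every character of $H$ extends to $G$, so Corollary~\ref{abelianadditionformula} applies, and (ii) abelian groups are saturated (Proposition~\ref{abeliangroupsaresaturated}), so $\ind_H^G$ preserves the Grothendieck filtration. A further useful reduction is that $\Gamma^n(H) = I(H)^n$ for $H$ abelian: since $\gamma_t(\rho - 1) = 1 + (\rho - 1)t$ for every $1$-dimensional $\rho$, the multiplicativity of $\gamma_t$ shows that each $\gamma^m(y)$, for $y \in I(H)$, is a $\ZZ$-combination of monomials of total degree $m$ in elements of the form $\rho - 1$, hence lies in $I(H)^m$.

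The heart of the proof is the case $n = 1$: for every $1$-dimensional $\rho$ of $H$, I shall show $\nn_H^G(\rho - 1) \in I(G)^p$. Fix an extension $\bar\rho$ of $\rho$ to $G$, set $u = \bar\rho - 1 \in I(G)$, and let $\omega$ be a nontrivial character of $G$ lifted from $\widehat{G/H}$, $w = \omega - 1$. Applying Corollary~\ref{abelianadditionformula} with $\sigma = \rho, \tau = 1$ and expanding $\bar\rho^i = (1+u)^i$ binomially gives (for $p$ odd)
\[
	\nn_H^G(\rho - 1) = u^p + (p - \CC[G/H])\sum_{k=1}^{p-1}\frac{1}{p}\binom{p}{k}\, u^k.
\]
A hockey-stick expansion rewrites $p - \CC[G/H] = -\sum_{j=1}^{p-1}\binom{p}{j+1}w^j$, and the relation $\omega^p = 1$ in $R(G)$ translates into the identity $pw + \binom{p}{2}w^2 + \cdots + w^p = 0$. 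The crux is to deduce from this that $pw \in I(G)^p$: rearranging yields $pw\cdot U = -w^p$ with $U = 1 + \sum_{k=2}^{p-1}\tfrac{1}{p}\binom{p}{k}w^{k-1}$, and since $U \equiv 1 \pmod{(w)}$ it is invertible in $R(G)/(w^p)$, forcing $pw \in (w^p) \subseteq I(G)^p$. Granted this, each correction term $(p - \CC[G/H])u^k$ with $1 \leq k \leq p-1$ lies in $I(G)^p$: the top $j = p-1$ piece is already in $I(G)^{p-1+k}$, and for $j < p-1$ the quotient $\binom{p}{j+1}/p$ is an integer while $pw^j = w^{j-1}\cdot pw \in I(G)^{p+j-1}$. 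The case $p = 2$ is a short direct computation from the corresponding formula in Corollary~\ref{abelianadditionformula}.

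With the base case in hand, multiplicativity of $\nn_H^G$ gives $\nn_H^G\bigl(\prod_{i=1}^n (\rho_i - 1)\bigr) = \prod_i \nn_H^G(\rho_i - 1) \in I(G)^{np}$ for any $n$-fold product of generators. Since $I(H)^n$ is $\ZZ$-spanned by such products (the identity $\sigma(\rho - 1) = (\sigma\rho - 1) - (\sigma - 1)$ absorbs prefactors), what remains is to verify that $\{x \in I(H)^n : \nn_H^G(x) \in I(G)^{np}\}$ is closed under $\ZZ$-linear combinations. Negation is immediate by multiplicativity: $\nn_H^G(-x) = \nn_H^G(-1)\cdot \nn_H^G(x)$, and $\nn_H^G(-1) \in R(G)$ (equal to $-1$ for $p$ odd and to the nontrivial character of $G/H$ when $p = 2$). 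For addition, Lemma~\ref{abelianadditionvirtual} yields $\nn_H^G(x+y) = \nn_H^G(x) + \nn_H^G(y) + \sum_{i=1}^{p-1}\ind_H^G(x^i y^{p-i})$; each product $x^i y^{p-i}$ lies in $I(H)^{np}$, and $\ind_H^G$ preserves this filtration because abelian groups are saturated. The main obstacle in this plan is the identity $pw \in I(G)^p$ at the heart of the base case; everything else is bookkeeping with the formulas of Section~\ref{additionformula}.
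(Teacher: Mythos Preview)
Your proof is correct and follows essentially the same architecture as the paper's: reduce to the base case $\nn_H^G(\rho-1)\in\Gamma^p(G)$ via Corollary~\ref{abelianadditionformula}, extend to monomials by multiplicativity, and pass to $\ZZ$-linear combinations using Lemma~\ref{abelianadditionvirtual} together with the fact that $\ind_H^G$ preserves the $\Gamma$-filtration on abelian groups. The one noteworthy variation is in the key sublemma $pw\in\Gamma^p(G)$: the paper bootstraps the relation $pY=-\sum_{i\ge 2}\binom{p}{i}Y^i$ by iterated substitution, whereas you package the same relation as $pw\cdot U=-w^p$ and invert $U$ in $R(G)/(w^p)$; both arguments yield $pw\in(w^p)$ and are really two phrasings of the same idea. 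Your treatment of negation and of the reduction $\Gamma^n(H)=I(H)^n$ is slightly more explicit than the paper's, but otherwise the proofs coincide.
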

\begin{proof}
	Recall that $\Gamma^n(H)$ is generated by elements of the form $(\rho_1-1)^{i_1}\cdots(\rho_k-1)^{i_k}$ for $\rho_i$ irreducible characters of $H$ and $\sum i_k \geq n$. Note that, in our case, each $\rho_i$ is one-dimensional.

  As a first step, we prove that if $\rho$ is any irreducible character of $H$, then $\nn_H^G(\rho -1) \in \Gamma^p(G)$. First, assume $p=2$. In the notation of \Cref{abelianadditionformula}, we have:
  \begin{align*}
    \nn_H^G(\rho - 1) &= \bar{\rho}^2 -1 + \CC[G/H] - \CC[G/H]\bar{\rho} \\
      &= (\bar{\rho} - 1)^2 +2(\bar{\rho} - 1) - \CC[G/H](\bar{\rho} - 1) \\
      &= (\bar{\rho} - 1)^2 -(\CC[G/H] -2)(\bar{\rho} - 1) \ \in \Gamma^2(G).
  \end{align*}
  If $p$ is odd:
	\begin{align*}
		\nn_H^G(\rho-1) &= (\bar{\rho})^p - 1 +\CC[G/H] \sum_{i = 1}^p\frac{1}{p} \binom{p}{i}(-1)^{p-i} (\bar{\rho})^i.
	\end{align*}
	Consider the permutation representation $\CC[G/H]$ and recall that $\CC[G/H] = 1+\sigma+\cdots+\sigma^{p-1}$ for some linear representation $\sigma$ of $G$. Let $Y = \sigma -1$, then $Y^i \in \Gamma^i(G)$ and:
	\[
		\CC[G/H] = \sum_{i = 0}^{p-1} (Y+1)^i = \sum_{i = 0}^{p-1}\binom{p}{i+1}Y^i.
	\]
	Note that $(Y+1)^p -1= 0$, and thus
	\[
		pY = -\sum_{i = 2}^{p}\binom{p}{i}Y^i \ \in \Gamma^2(G).
	\]
	Thus we can substitute every instance of $pY$ in the right-hand-side of the equation by $-\sum_{i = 2}^{p}\binom{p}{i}Y^i$. Iterating, we obtain that $pY \in \Gamma^p(G)$, and thus:
	\[
		\CC[G/H] \equiv p \ (\Mod \Gamma^p(G)).
	\]
	Therefore
	\begin{align*}
		\nn_H^G(\rho-1) &= (\bar{\rho})^p - 1 +\CC[G/H] \sum_{i = 1}^p\frac{1}{p} \binom{p}{i}(-1)^{p-i} (\bar{\rho})^i \\
			&\equiv (\bar{\rho})^p - 1 + \sum_{i = 1}^p \binom{p}{i}(-1)^{p-i}(\bar{\rho})^i \ (\Mod \Gamma^p(G)) \\
			&\equiv (\bar{\rho} -1)^p \ (\Mod \Gamma^p(G)) \\
			&\equiv 0 \ (\Mod \Gamma^p(G)),
	\end{align*}
  Which completes the first step.

	Since the norm map is multiplicative, we have, for any prime $p$,
	\[
		\nn_H^G\left((\rho_1-1)^{i_1}\cdots(\rho_k-1)^{i_k}\right) \in \Gamma^{pn}(G)
	\]
	whenever $\sum i_k \geq n$.

  Finally, let $x,y \in R(H)$ be generators of $\Gamma^n(H)$ of the form $(\rho_1-1)^{i_1}\cdots(\rho_k-1)^{i_k}$, as above. Then by \Cref{abelianadditionvirtual}, we have:
  \[
    \nn_H^G(x+y) = \nn_H^G(x) + \nn_H^G(y) + \sum_{i=1}^{p-1}\ind_H^G(x^{i}y^{p-i}).
  \]
  The terms $\nn_H^G(x), \nn_H^G(y)$ are both in $\Gamma^{np}(G)$ as shown above. Each term $x^iy^{p-i}$ is a product of $n$ elements in $\Gamma^n(H)$ and therefore in $\Gamma^{np}(H)$, and since induction preserves the Grothendieck filtration in the case of abelian groups, we have $\ind_H^G(x^iy^{p-i}) \in \Gamma^{np}(G)$ for all $i$; this concludes the proof.
\end{proof}
Thus, on the complex field $\CC$, tensor induction preserves the Grothendieck filtration. Since it satisfies the compatibility axioms of a Tambara functor on $R_\CC(-)$, it satisfies them at the graded level, and we have the following corollary:
\begin{Corollary}\label{abeliantambara}
	The restriction of $R^*_\CC(-)$ to abelian groups is a Tambara functor. \qed
\end{Corollary}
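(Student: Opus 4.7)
The plan is to leverage \Cref{normingamma} (which handles the prime-index case) together with the transitivity of tensor induction and the fact that abelian groups are saturated (\Cref{abeliangroupsaresaturated}), in order to promote the ungraded Tambara structure of \Cref{kgtambara} to the associated graded ring.

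First I would extend \Cref{normingamma} to arbitrary abelian inclusions. Given $H \leq G$ finite abelian, the finite abelian group $G/H$ admits a composition series, which pulls back to a chain $H = H_0 \leq H_1 \leq \cdots \leq H_r = G$ with each index $p_i := [H_{i+1}:H_i]$ prime. Since tensor induction is functorial in the category of $G$-sets (this is baked into the Tambara axioms, or can be extracted directly from the definition $f_\sharp$ in \Cref{definitions}), we have the transitivity formula
\[
  \nn_H^G = \nn_{H_{r-1}}^{G} \circ \nn_{H_{r-2}}^{H_{r-1}} \circ \cdots \circ \nn_{H_0}^{H_1}.
\]
Applying \Cref{normingamma} at each step, an element $x \in \Gamma^n(H)$ is sent successively into $\Gamma^{np_0}(H_1)$, then $\Gamma^{np_0 p_1}(H_2)$, and so on, landing in $\Gamma^{n[G:H]}(G)$. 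Hence $\nn_H^G$ preserves the Grothendieck filtration with the expected multiplicative shift, and induces a well-defined multiplicative map $\nn_H^G: \Gamma^n(H)/\Gamma^{n+1}(H) \to \Gamma^{n[G:H]}(G)/\Gamma^{n[G:H]+1}(G)$ on associated graded pieces.

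Next I would assemble the Tambara structure on $R^*_\CC(-)$. The ungraded functor $R_\CC(-)$ is already a Tambara functor by \Cref{kgtambara}. Its restriction, induction and norm maps all preserve the Grothendieck filtration on abelian groups: restriction by functoriality of $R^*_\CC(-)$, induction by \Cref{abeliangroupsaresaturated} (so that $\rr^*_\CC = R^*_\CC$ on abelian groups and induction factors through the filtration), and tensor induction by the previous paragraph. Any identity among these operations that holds in $R_\CC(-)$ therefore descends to the corresponding identity of graded maps. In particular, the exponential diagram axiom and the semi-Mackey axioms for $(R^*_\CC, f^*, f_\sharp)$ hold on every graded piece, since they hold before passing to the associated graded.

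The main obstacle is really already handled: the reduction to prime index through a composition-series refinement is clean, and the additivity needed to show filtration-preservation on sums is exactly the content of \Cref{abelianadditionvirtual}, already used inside the proof of \Cref{normingamma}. Once the filtration-preservation of $\nn_H^G$ for all abelian $H \leq G$ is in hand, the axioms transfer mechanically from the ungraded Tambara structure, and the corollary follows.
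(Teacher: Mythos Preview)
Your proposal is correct and follows essentially the same route as the paper. The paper's own argument is the single sentence preceding the \qed: once \Cref{normingamma} shows the norm respects the filtration, the ungraded Tambara axioms from \Cref{kgtambara} descend verbatim to the associated graded; you have simply made explicit the composition-series reduction from arbitrary index to prime index, which the paper tacitly assumes when it passes from the prime-index statement of \Cref{normingamma} to the blanket claim that ``tensor induction preserves the Grothendieck filtration.''
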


	\section{Application: norms in graded character rings of abelian groups} \label{application}
 In group cohomology Steenrod operations can be defined via the Evens norm corresponding to the inclusion $G \hookrightarrow G\times C_p$ (see eg. \cite[Ch. 7]{carlson-townsley}). We propose here to compute that norm in the case of degree 1 classes in abelian groups. Let $G$ be abelian, and $\sigma$ a one-dimensional representation of $G$, with $x := c_1(\sigma)$.

Recall that $R_\CC(C_p)$ is generated by one character $\rho$ and that
\[
  R_\CC(G\times C_p) \cong R_\CC(G)\otimes R_\CC(C_p).
\]
In $R_\CC(G\times C_p)$, let $\overline{\rho} = 1\otimes \rho$ and $\overline{\sigma} =  \sigma \otimes 1$. In $R^*_\CC(G\times C_p)$, let $y = c_1(\overline{\rho})$ and $z = c_1(\overline{\sigma})$.

\begin{Proposition}
  With notation as above:
  \[
    \nn_G^{G\times C_p}(x) = z^p - zy^{p-1}.
  \]
\end{Proposition}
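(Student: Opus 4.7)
The starting point is the explicit addition formula of \Cref{abelianadditionformula}, applied with $H = G$, $K = G \times C_p$, and $\tau = 1$. Write $X := \bar{\sigma} - 1$ and $Y := \bar{\rho} - 1$ in $R_\CC(G \times C_p)$, so that $z$ and $y$ are the images of $X$ and $Y$ in $R^*_\CC(G \times C_p)$. For $p$ odd the formula gives
\[
    \nn_G^{G\times C_p}(\sigma - 1) = \bar{\sigma}^p - 1 + \CC[C_p]\sum_{i=1}^{p-1}\tfrac{1}{p}\binom{p}{i}(-1)^{p-i}\bar{\sigma}^i,
\]
with the analogous identity for $p=2$. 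By \Cref{normingamma} this element lies in $\Gamma^p(G \times C_p)$, and by \Cref{abeliantambara} its image in $\Gamma^p/\Gamma^{p+1}$ is exactly $\nn_G^{G\times C_p}(x)$, so the task is to identify that class.

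Two identities drive the simplification. First, expanding the permutation character yields $\CC[C_p] = \sum_{j=0}^{p-1}\binom{p}{j+1}Y^j = p + YC$ where $C := \binom{p}{2} + \binom{p}{3}Y + \cdots + Y^{p-2}$; combined with $\bar{\sigma}^p - 1 = X^p + pB$ for $B := \sum_{i=1}^{p-1}\tfrac{1}{p}\binom{p}{i}X^i$, a short rearrangement using $(\bar\sigma - 1)^p = X^p$ rewrites the formula cleanly as
\[
    \nn_G^{G\times C_p}(\sigma - 1) = X^p - YCB.
\]
Second, the relation $\bar{\rho}^p = 1$ gives $pY = -Y^p - \sum_{k=2}^{p-1}\binom{p}{k}Y^k$, and since $p \mid \binom{p}{k}$ for $2 \leq k \leq p-1$, a short induction bootstraps $pY \in \Gamma^2$ up to $pY \in \Gamma^p$: if $pY \in \Gamma^k$ then each $\binom{p}{j}Y^j = m_j Y^{j-1}(pY) \in \Gamma^{k+1}$ for $2 \leq j \leq p-1$, while $Y^p \in \Gamma^p$, so $pY$ lands in $\Gamma^{\min(k+1, p)}$.

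Reducing $YCB$ modulo $\Gamma^{p+1}$ is then a coefficient check. The coefficient of $X^b Y^a$ in $YCB$ for $1 \leq a, b \leq p-1$ is $\binom{p}{a+1}\cdot\tfrac{1}{p}\binom{p}{b}$. If $a + b > p$ the term lies in $\Gamma^{p+1}$ automatically; otherwise, either $(a, b) = (p-1, 1)$, contributing $XY^{p-1}$ with coefficient $1$, or else $\binom{p}{a+1}$ is divisible by $p$, and rewriting $pX^bY^a = X^bY^{a-1}\cdot pY$ puts the term in $\Gamma^{a+b-1}\cdot \Gamma^p \subseteq \Gamma^{p+1}$ since $a + b \geq 2$. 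Hence $\nn_G^{G\times C_p}(\sigma - 1) \equiv X^p - XY^{p-1} \pmod{\Gamma^{p+1}}$, and its image in $R^p_\CC(G\times C_p)$ is $z^p - zy^{p-1}$. The case $p=2$ falls out directly from the $p=2$ form of \Cref{abelianadditionformula}, yielding $\nn_G^{G\times C_2}(\sigma-1) = X^2 - XY$ on the nose.

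The main obstacle is the coefficient bookkeeping in the final step; the factorization $\CC[C_p] - p = YC$ makes it tractable by pairing each $Y$ with a factor $p$ arising from $p \mid \binom{p}{a+1}$, so that $pY \in \Gamma^p$ eliminates all unwanted cross-terms at once.
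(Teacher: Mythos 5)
Your proof is correct and follows essentially the same route as the paper: specialize \Cref{abelianadditionformula} to $\tau = 1$, rewrite everything in terms of $X = \bar\sigma - 1$ and $Y = \bar\rho - 1$, and invoke $pY \in \Gamma^p$ (from the proof of \Cref{normingamma}) to discard all cross-terms except $XY^{p-1}$. The factorization $\nn_G^{G\times C_p}(\sigma-1) = X^p - YCB$ and the explicit pairing of the divisibility $p \mid \binom{p}{a+1}$ with $pY \in \Gamma^p$ supply the coefficient bookkeeping that the paper compresses into a single ``Thus,'' but the underlying computation is the same.
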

\begin{proof}
  Let $X = \rho - 1 \in R_\CC(G)$ and $Y = \overline{\rho} - 1, Z = \overline{\sigma} - 1 \in R_\CC(G\times C_p)$. We compute $\nn_G^{G\times C_p}(X)$ in the cases $p = 2$ and $p$ odd.
  \begin{itemize}
    \item Case $p = 2$. By \Cref{abelianadditionformula}:
    \begin{align*}
      \nn_{G}^{G\times C_2}(X) &= \nn_{G}^{G\times C_2}(\rho - 1) = \overline{\rho}^2 - 1^2 + \CC[G\times C_2/G] - \CC[G\times C_2/G]\cdot \overline{\rho} \\
          &= (\overline{\rho} - 1)^2 + 2(\overline{\rho}-1) - \CC[G\times C_2/G](\overline{\rho} -1) \\
          &= (\overline{\rho} -1)^2 - (\overline{\sigma} +1 - 2)(\overline{\rho} -1) \text{ since } \CC[G\times C_2/G] = \overline{\sigma} +1 \\
          &= (\overline{\rho} -1)^2 - (\overline{\sigma} - 1)(\overline{\rho} - 1) \\
          &= Z^2 - ZY.
    \end{align*}
    In the graded ring $R^*_\CC(G\times C_2)$, this yields
    \[
      \nn_{G}^{G\times C_2}(x) = z^2 - zy.
    \]

    \item Case $p$ odd. Let $K = G\times C_p$, then, again by \Cref{abelianadditionformula}:
    \begin{align*}
      \nn_G^K(X) &= \nn_G^K(\rho - 1)\\
        &= \nn_G^K(\rho) + \nn_G^K(-1) + \CC[K/G]\sum_{i=1}^{p-1}\frac{1}{p}\binom{p}{i}\overline{\rho}^i (-1)^{p-i} \\
        &= \overline{\rho}^p - 1 + (\CC[K/G] - p)\left(\sum_{i=1}^{p-1}\frac{1}{p}\binom{p}{i}\overline{\rho}^i(-1)^{p-i}\right) + \sum_{i=1}^{p-1}\binom{p}{i}\overline{\rho}^i(-1)^{p-i}.
    \end{align*}
    Recall that $\overline{\rho} = Z+1$ and that:
    \begin{align*}
      \CC[K/G] &= 1+\overline{\sigma} + \cdots + \overline{\sigma}^{p-1} \\
        &= \sum_{i=0}^{p-1}(Y+1)^i.
    \end{align*}
    thus
    \begin{align*}
        \nn_G^K(X) =& (Z+1)^p - 1 + \sum_{i=1}^{p-1} \binom{p}{i}(Z+1)^i(-1)^{p-i} \\
          &+\left(\sum_{i = 0}^{p-1}(Y+1)^i - p\right)\left(\frac{1}{p}\sum_{i=1}^{p-1}\binom{p}{i}(Z+1)^i(-1)^{p-i}\right) \\
          =& (Z+1-1)^p + \frac{1}{p}\left[\sum_{j=  0}^{p-1}\left(\sum_{i = j}^{p-1}\binom{i}{j}Y^j\right) - p\right]\cdot\left[\sum_{i=1}^{p-1}\binom{p}{i}(Z+1)^i(-1)^{p-i}\right].
    \end{align*}
    A straightforward induction shows that
    \[
      \sum_{i = j}^{p-1}\binom{i}{j} = \binom{p}{j+1},
    \]
    so that:
    \begin{align*}
        \nn_G^K(X) =& Z^p + \frac{1}{p}\left[\sum_{j=0}^p \binom{p}{j+1}Y^j - p\right]\cdot\left[(Z+1-1)^p - (Z+1)^p +1\right] \\
          =& Z^p + \frac{1}{p}\left[\sum_{j=1}^p \binom{p}{j+1}Y^j\right]\cdot\left[-\sum_{i=1}^{p-1}\binom{p}{i}Z^i\right].
      \end{align*}
      Now recall from the proof of \Cref{normingamma} that $pY, pZ \in \Gamma^{p}(G\times C_p)$. Thus:
      \[
        \nn_G^{G\times C_p}(X) \equiv Z^p - ZY^{p-1} (\Mod \ \Gamma^{p+1}),
      \]
      which concludes the proof.
  \end{itemize}
\end{proof}

\bibliographystyle{alpha}

\bibliography{bibliography}

\end{document}